\newtheorem{theorem}{Theorem}[section]
\newtheorem{lemma}[theorem]{Lemma}
\newtheorem{assumption}{Assumption}
\numberwithin{equation}{section}
\begin{document}

\title [inverse random potential scattering]{Inverse random potential scattering for elastic waves}

\author{Jianliang Li}
\address{School of Mathematics and Statistics, Changsha University of Science
and Technology, Changsha 410114, P.R. China.}
\email{lijl@amss.ac.cn}

\author{Peijun Li}
\address{Department of Mathematics, Purdue University, West Lafayette, Indiana
47907, USA.}
\email{lipeijun@math.purdue.edu}

\author{Xu Wang}
\address{Department of Mathematics, Purdue University, West Lafayette, Indiana
47907, USA.}
\email{wang4191@purdue.edu}

\thanks{The research of PL is supported in part by the NSF grant DMS-1912704.}

\subjclass[2010]{35R30, 35R60, 60H15}

\keywords{Inverse scattering problem, elastic wave equation, generalized
Gaussian random field, pseudo-differential operator, principal symbol,
uniqueness.} 

\begin{abstract}
This paper is concerned with the inverse elastic scattering problem for a random potential in three dimensions. Interpreted as a distribution, the potential is assumed to be a microlocally isotropic Gaussian random field whose covariance operator is a classical pseudo-differential operator. Given the potential, the direct scattering problem is shown to be well-posed in the distribution sense by studying the equivalent Lippmann--Schwinger integral equation. For the inverse scattering problem, we demonstrate that the microlocal strength of the random potential can be uniquely determined with probability one by a single realization of the high frequency limit of the averaged compressional or shear backscattered far-field pattern of the scattered wave. The analysis employs the integral operator theory, the Born approximation in the high frequency regime, the microlocal analysis for the Fourier integral operators, and the ergodicity of the wave field.
\end{abstract}

\maketitle

\section{Introduction}

Inverse problems are to seek the causal factors which produce observations. Mathematically, they are to determine unknown parameters in partial differential equations via external measurements. The field has undergone a tremendous growth in the last several decades. Motivated by diverse scientific and industrial applications such as radar and sonar, nondestructive testing, and medical imaging, the inverse scattering has progressed to an area of intense activity and is in the foreground of mathematical research in scattering theory \cite{CK19}. The inverse scattering problems for elastic waves have received ever-increasing attention due to the significant applications in geophysics, seismology, and elastography \cite{ABG-15}. We refer to \cite{BHSY,BLZ,BC} and the references cited therein for mathematical and computational results on inverse problems in elasticity. Stochastic inverse problems refer to inverse problems that involve uncertainties, which become essential for mathematical models in order to take account of unpredictability of the environments, incomplete knowledge of the systems and measurements, and interference between different scales. In addition to the existing hurdles of nonlinearity and ill-posedness for deterministic counterparts, stochastic inverse problems have substantially more difficulties due to randomness and uncertainties. The area is widely open, and new models and methodologies are highly desired in related applications of the stochastic inverse problems. Recent progress can be found in \cite{BCL18,LHL,LL,LW1,LW2} on the the inverse random source problems for the wave equations, where the goal is to determine the statistical properties of the random source from a knowledge the radiated random wave field. 

Due to the nonlinearity, the inverse random potential problem is more challenging than the inverse random source problem. There are only few mathematical results. In \cite{LPS08}, the authors consider the point source illumination and study the inverse random potential problem for the two-dimensional 
Schr\"{o}dinger equation, where the potential is assumed to be a microlocally isotropic Gaussian random field (cf. Assumption \ref{as:rho}). It is shown that the microlocal strength of the random potential can be uniquely determined by high frequency limit of the scattered wave averaged over the frequency band. The work has been extended in \cite{LLW} to the more complicated stochastic elastic wave equation in $\mathbb R^n$ with $n=2, 3$. The microlocal strength of the random potential is shown to be uniquely determined by the high frequency limit of $n$ scattered fields averaged over the frequency band, where the $n$ scattered fields are excited by $n$ point sources with $n$ unit orthonormal polarization vectors. When the incidence is taken to be the plane wave, the inverse random potential problem is 
examined for the three-dimensional Schr\"{o}dinger equation in \cite{CHL}. The authors demonstrate that the microlocal strength of the random potential can be uniquely recovered by the high frequency limit of the backscattered far-field pattern averaged over the frequency band. In \cite{LLM-SIMA,LLM}, the authors consider a more complex model and study the inverse random potential problem for the three-dimensional Schr\"{o}dinger equation where both the potential and source are random. A related inverse acoustic scattering problem for a random impedance can be found in \cite{HLP}. 

This paper is concerned with an inverse scattering problem for the three-dimensional elastic wave equation with a random potential. Specifically, consider the stochastic elastic wave equation 
\begin{eqnarray}\label{a1}
\mu\Delta \boldsymbol{u}+(\lambda+\mu)\nabla\nabla\cdot
\boldsymbol{u}+\omega^2\boldsymbol{u}-\rho\boldsymbol{u}=0 \quad {\rm in} ~ \mathbb R^{3},
\end{eqnarray}
where $\omega>0$ is the angular frequency, $\lambda$ and $\mu$ are the Lam\'{e} constants satisfying $\mu>0$ and $\lambda+2\mu>0$ such that the second order partial differential operator $\Delta^*:=\mu\Delta+(\lambda+\mu)\nabla\nabla\cdot$ is strongly elliptic (cf. \cite[Section 10.4]{M}), and the random potential $\rho$ stands for a linear load inside a known homogeneous and isotropic elastic solid, and is assumed to be a generalized Gaussian random field whose definition is given as follows. 
 
 \begin{assumption}\label{as:rho}
 Assume that the random potential $\rho$ is a  centered microlocally isotropic Gaussian random field of order $-m$ in $D$ with $m\in (2, 3]$ and $D\subset\mathbb R^3$ being a bounded domain. The covariance operator ${\mathcal C}_\rho$ of $\rho$, defined by
\[
\langle\mathcal C_\rho\varphi,\psi\rangle:=\mathbb E[\langle\rho,\varphi\rangle\langle\rho,\psi\rangle]\quad\forall~\varphi,\psi\in\mathcal{D},
\]
is a classical pseudo-differential operator, and its principal symbol has the form $\phi(x)|\xi|^{-m}$, where $\phi$ is called the microlocal strength of $\rho$ and satisfies $\phi\in C_0^{\infty}(D)$ and $\phi\geq 0$. 
 \end{assumption}
 
The total field $\boldsymbol{u}$ consists of the scattered field $\boldsymbol{u}^{sc}$ and the incident field $\boldsymbol{u}^{inc}$, which is assumed to be the elastic plane wave of the general form 
\begin{eqnarray}\label{a2}
\boldsymbol{u}^{inc}(x)=\alpha\boldsymbol u^{inc}_{\rm p}(x)+\beta\boldsymbol u^{inc}_{\rm s}(x),\quad \alpha,\beta\in{\mathbb C}.
\end{eqnarray}
Here, $\boldsymbol{u}^{inc}_{\rm p}:=\theta e^{{\rm i}\kappa_p x\cdot \theta}$ is the compressional plane wave and $\boldsymbol{u}^{inc}_{\rm s}:=\theta^{\perp}e^{{\rm i}\kappa_s x\cdot \theta}$ denotes the shear plane wave, where $\theta\in{\mathbb S}^2:=\{x\in{\mathbb R}^3: |x|=1\}$ represents the unit propagation direction, $\theta^{\perp}\in\mathbb S^2$ is a vector orthogonal to $\theta$, and $\kappa_{\rm p}:=c_{\rm p}\omega$ and $\kappa_{\rm s}:=c_{\rm s}\omega$ with $c_{\rm p}:=(\lambda+2\mu)^{-\frac{1}{2}}$ and $c_{\rm s}:=\mu^{-\frac{1}{2}}$ denote the compressional and shear wave numbers, respectively. In this paper, we consider separately these two types of incident plane waves: one is the compressional plane wave $\boldsymbol{u}^{inc}=\boldsymbol{u}_{\rm p}^{inc}$ with $\alpha=1$ and $\beta=0$; another is the shear plane wave $\boldsymbol{u}^{inc}=\boldsymbol{u}_{\rm s}^{inc}$ with $\alpha=0$ and $\beta=1$. It can be verified that the incident field $\boldsymbol u^{inc}$ satisfies
\[
 \mu\Delta \boldsymbol{u}^{inc}+(\lambda+\mu)\nabla\nabla\cdot \boldsymbol{u}^{inc}+\omega^2\boldsymbol{u}^{inc}=0 \quad {\rm in} ~ \mathbb R^{3}. 
\]

Since the problem is formulated in the whole space, an appropriate radiation condition is needed to ensure the uniqueness of the solution. As usual, the scattered field $\boldsymbol{u}^{sc}$ is required to satisfy the Kupradze--Sommerfeld radiation condition. Based on the Helmholtz decomposition (cf. \cite[Appendix B]{BLZ}), the scattered field $\boldsymbol{u}^{sc}$ can be decomposed into the compressional wave component $\boldsymbol{u}^{sc}_{\rm p}:=-\frac{1}{\kappa_{\rm p}^2}\nabla\nabla\cdot\boldsymbol{u}^{sc}$ and the shear wave component $\boldsymbol{u}^{sc}_{\rm s}:=\frac{1}{\kappa_{\rm s}^2}{\rm{\bf curl}}{\rm curl}\boldsymbol{u}^{sc}$ in $\mathbb R^3\setminus\overline D$. The Kupradze--Sommerfeld radiation condition reads that $\boldsymbol{u}_{\rm p}^{sc}$ and $\boldsymbol{u}_{\rm s}^{sc}$ satisfy the Sommerfeld radiation condition 
\begin{eqnarray}\label{a3}
\lim_{|x|\rightarrow\infty}|x|\left(\partial_{|x|}
\boldsymbol{u}^{sc}_{\rm p} -{\rm i}\kappa_{\rm p}\boldsymbol{u}^{sc}_{\rm
p}\right)=0,\quad \lim_{|x|\rightarrow\infty}|x|\left(\partial_{|x|}
\boldsymbol{u}^{sc}_{\rm s} -{\rm i}\kappa_{\rm s}\boldsymbol{u}^{sc}_{\rm
s}\right)=0
\end{eqnarray}
uniformly in all directions $\hat{x}:=x/|x|\in {\mathbb S^2}$. The radiation condition (\ref{a3}) leads to the following asymptotic expansion of $\boldsymbol{u}^{sc}$:
\begin{eqnarray}\label{a4}
\boldsymbol{u}^{sc}(x)=\frac{e^{{\rm i}\kappa_{\rm p}|x|}}{|x|}\boldsymbol{u}_{\rm p}^{\infty}(\hat{x})+\frac{e^{{\rm i}\kappa_{\rm s}|x|}}{|x|}\boldsymbol{u}_{\rm s}^{\infty}(\hat{x})+O(|x|^{-2}),\quad |x|\to\infty,
\end{eqnarray}
where $\boldsymbol{u}_{\rm p}^{\infty}(\hat{x})$ and $\boldsymbol{u}_{\rm s}^{\infty}(\hat{x})$ are known as the compressional and shear far-field patterns of the scattered field $\boldsymbol{u}^{sc}$, respectively.

Note that the wave fields $\boldsymbol u,\boldsymbol u^{\infty}_{\rm p},\boldsymbol u^{\infty}_{\rm s}$ also depend on the angular frequency $\omega$ and the propagation direction $\theta$. For clarity, we write $\boldsymbol v(x)$ as $\boldsymbol v(x,\omega,\theta)$ when it is necessary to express explicitly the dependence of the wave field $\boldsymbol v$ on $\omega$ and $\theta$.

Given the random potential $\rho$, the direct scattering problem is to investigate the well-posedness and regularity of the solution $\boldsymbol u$ to (\ref{a1})--(\ref{a3}). The inverse scattering problem aims to determine the microlocal strength $\phi$ of the random potential from a knowledge of the wave field $\boldsymbol u$. In this work, we consider both the direct and inverse scattering problems. The direct scattering problem is shown to be well-posed in the distribution sense (cf. Theorem \ref{thm2}). Below, we present the main result on the uniqueness of the inverse scattering problem and outline the steps of its proof for readability.

\begin{theorem}\label{thm1}
Let the random potential $\rho$ satisfy the Assumption \ref{as:rho} with $m\in(\frac{14}5,3]$. Denote by $\boldsymbol{u}_{\rm p}^{\infty}$ and $\boldsymbol{u}_{\rm s}^{\infty}$ the compressional and shear far-field patterns of the scattered wave $\boldsymbol u^{sc}$ associated with $\boldsymbol{u}^{inc}=\boldsymbol{u}^{inc}_{\rm p}$ and $\boldsymbol{u}^{inc}=\boldsymbol{u}^{inc}_{\rm s}$, respectively. For all $\theta\in {\mathbb S}^2$ and $\tau\geq 0$, it holds almost surely that 
\begin{align}\label{a5}
\lim_{Q\to\infty}\frac{1}{Q}\int_{Q}^{2Q}\omega^{m}\boldsymbol{u}_{\rm p}^{\infty}(-\theta,\omega,\theta)\cdot\overline{\boldsymbol{u}_{\rm p}^{\infty}(-\theta,\omega+\tau,\theta)}d\omega &=C_{\rm p}\hat{\phi}(2c_{\rm p}\tau \theta),\\\label{a6}
\lim_{Q\to\infty}\frac{1}{Q}\int_{Q}^{2Q}\omega^{m}\boldsymbol{u}_{\rm s}^{\infty}(-\theta,\omega,\theta)\cdot\overline{\boldsymbol{u}_{\rm s}^{\infty}(-\theta,\omega+\tau,\theta)}d\omega &=C_{\rm s}\hat{\phi}(2c_{\rm s}\tau \theta),
\end{align}
where $C_{\rm p}=2^{-m-4}\pi^{-2}c_{\rm p}^{4-m}$, $C_{\rm s}=2^{-m-4}\pi^{-2}c_{\rm s}^{4-m}$, and $
\hat{\phi}(\xi)=\mathcal{F}[\phi](\xi)=\int_{\mathbb R^3}\phi(x)e^{-{\rm i}x\cdot\xi}dx$
is the Fourier transform of $\phi$. Moreover, the microlocal strength $\phi$ is uniquely determined from (\ref{a5}) or (\ref{a6}) with $(\tau,\theta)\in\Theta$ and $\Theta\subset\mathbb R_+\times\mathbb S^2$ being any open domain.
\end{theorem}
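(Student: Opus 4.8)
The plan is to follow the three-stage strategy that is by now standard for stochastic inverse potential problems: reduce the backscattered far-field pattern to its Born (single-scattering) term plus a lower-order remainder; compute the expectation of the frequency-correlated quantity from the pseudo-differential structure of $\mathcal C_\rho$; upgrade this expectation to an almost-sure limit by an ergodicity argument; and finally invoke the analyticity of $\widehat\phi$ for the uniqueness statement. \textbf{Step 1 (Born approximation).} Using the Lippmann--Schwinger formulation behind Theorem \ref{thm2}, I would split $\boldsymbol u^{sc}=\boldsymbol u^{sc}_1+\boldsymbol u^{sc}_2$, where $\boldsymbol u^{sc}_1$ is the single-scattering term $-\int \mathbf G(\cdot,y)\rho(y)\boldsymbol u^{inc}(y)\,dy$ built from the elastic Green tensor $\mathbf G$ and $\boldsymbol u^{sc}_2$ collects multiple scattering. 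Passing to far fields and specializing to backscattering $\hat x=-\theta$, the phase in the compressional component produces a factor $\widehat\rho(2c_{\rm p}\omega\theta)$ (and $\widehat\rho(2c_{\rm s}\omega\theta)$ in the shear component), times explicit prefactors in $\omega$ and the polarization vectors, plus the remainder far-field pattern coming from $\boldsymbol u^{sc}_2$. The key estimate is that, almost surely and uniformly over the window $\omega\in[Q,2Q+\tau]$, the remainder far-field pattern is smaller than the Born term by a positive power of $\omega$; this is where the hypothesis $m>\tfrac{14}{5}$, strictly stronger than the $m>2$ of Assumption \ref{as:rho}, is consumed, through the mapping properties of the elastic volume potential applied to a field whose Sobolev regularity is governed by $m$.

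\textbf{Step 2 (expectation of the correlation).} For the Born term, after using $\theta\cdot\theta=1$ (resp.\ $\theta^\perp\cdot\theta^\perp=1$), the quantity $\omega^m\boldsymbol u^\infty_{\rm p}(-\theta,\omega,\theta)\cdot\overline{\boldsymbol u^\infty_{\rm p}(-\theta,\omega+\tau,\theta)}$ is, up to a fixed numerical prefactor, $\omega^m\,\widehat\rho(2c_{\rm p}\omega\theta)\,\overline{\widehat\rho(2c_{\rm p}(\omega+\tau)\theta)}$ plus lower-order contributions. Taking expectations, the definition of $\mathcal C_\rho$ and the assumption that it is a classical pseudo-differential operator with principal symbol $\phi(x)|\xi|^{-m}$ give, up to symbol-smoothing errors that are negligible in the high-frequency limit,
\[
\mathbb E\big[\widehat\rho(\xi)\,\overline{\widehat\rho(\eta)}\big]\;\approx\;\widehat\phi(\xi-\eta)\,\Big|\tfrac{\xi+\eta}{2}\Big|^{-m}.
\]
With $\xi=2c_{\rm p}\omega\theta$ and $\eta=2c_{\rm p}(\omega+\tau)\theta$ one has $\xi-\eta=-2c_{\rm p}\tau\theta$ and $|\tfrac{\xi+\eta}{2}|=2c_{\rm p}(\omega+\tfrac\tau2)$, so the weight $\omega^m$ cancels the decaying factor $|\tfrac{\xi+\eta}{2}|^{-m}\sim(2c_{\rm p}\omega)^{-m}$ as $\omega\to\infty$, leaving $C_{\rm p}\widehat\phi(2c_{\rm p}\tau\theta)$ with $C_{\rm p}$ gathering the powers of $2$, $\pi$, $c_{\rm p}$ and the far-field normalization; the shear case is identical with $c_{\rm p}$ replaced by $c_{\rm s}$.

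\textbf{Step 3 (ergodicity) and Step 4 (uniqueness).} Writing $X_Q$ for the frequency average in \eqref{a5} applied to the Born term, I would bound $\mathbb E|X_Q-\mathbb E X_Q|^2$ by expanding the product of four copies of $\widehat\rho$ via the Isserlis/Wick formula (valid since $\rho$ is Gaussian) and using that $\widehat\phi$ is Schwartz, as $\phi\in C_0^\infty(D)$, to show that the integrand decorrelates when the two frequency arguments are well separated; this yields $\mathbb E|X_Q-\mathbb E X_Q|^2=O(Q^{-\delta})$ for some $\delta>0$. A Borel--Cantelli argument along $Q_k=k^{2/\delta}$ together with a continuity estimate filling the gaps between consecutive $Q_k$ gives, for each fixed $(\tau,\theta)$, the almost-sure convergence of $X_Q$ to $C_{\rm p}\widehat\phi(2c_{\rm p}\tau\theta)$; combined with the remainder estimate of Step 1 this proves \eqref{a5} for fixed $(\tau,\theta)$, and since both sides are continuous in $(\tau,\theta)$ the exceptional null set is removed by restricting to a countable dense subset of $\mathbb R_+\times\mathbb S^2$ and extending by continuity. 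Equation \eqref{a6} follows the same way. For the last claim, $\phi\in C_0^\infty(D)$ with $D$ bounded makes $\widehat\phi$ the restriction of an entire function on $\mathbb C^3$ by the Paley--Wiener theorem; the set $\{2c_{\rm p}\tau\theta:(\tau,\theta)\in\Theta\}$ contains an open subset of $\mathbb R^3$, so $\widehat\phi$ is determined on all of $\mathbb R^3$ by analytic continuation, hence $\phi$ by Fourier inversion.

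\textbf{Main obstacle.} The crux is Step 1 together with the fourth-moment bound in Step 3: one must prove that the multiple-scattering far-field pattern is genuinely of lower order than the Born term after multiplication by $\omega^m$ and integration over the window, with bounds uniform enough to survive the average, and this is exactly what forces $m>\tfrac{14}{5}$. The elastic setting aggravates the bookkeeping relative to the scalar Schr\"odinger case: the Green tensor carries two wave speeds $c_{\rm p},c_{\rm s}$ and the Helmholtz-decomposition projections onto the compressional and shear parts, so both the principal-symbol computation in Step 2 and the resolvent mapping estimates in Step 1 become heavier.
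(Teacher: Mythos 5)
Your architecture is right for three of the four ingredients: the first-order (Born) correlation computed from the principal symbol $\phi(x)|\xi|^{-m}$, the ergodicity upgrade from expectation to an almost-sure limit, and the uniqueness via Paley--Wiener and analytic continuation all match the paper (the paper's ergodicity lemma is a covariance-decay criterion proved via a second-moment computation and Fatou, while your Borel--Cantelli-along-a-subsequence variant is an acceptable, arguably more careful, substitute). The genuine gap is in Step 1. You split $\boldsymbol u^{sc}$ into the Born term plus a single multiple-scattering remainder and claim the remainder is beaten by a positive power of $\omega$ "through the mapping properties of the elastic volume potential." That deterministic route cannot close for the \emph{second-order} (double-scattering) term. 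The operator bound behind Theorem \ref{thm2} is $\|{\mathcal K}_\omega\|_{\mathcal L(\boldsymbol H^s_{-1},\boldsymbol H^s_{-1})}\lesssim\omega^{-1+2s}$ with the forced constraint $s>\frac{3-m}{2}>0$; it yields $|\boldsymbol u_{2,\rm p}^\infty(-\theta,\omega,\theta)|\lesssim\omega^{-1+4s}$, hence $\omega^m|\boldsymbol u_{2,\rm p}^\infty|^2\lesssim\omega^{m-2+8s}$, and the Ces\`aro average over $[Q,2Q]$ vanishes only if $s<\frac{2-m}{8}<0$ --- impossible. Only for the terms of order $j\ge3$ does the deterministic estimate suffice (there one gets $\omega^{-2+6s}$, and the window $\frac{3-m}{2}<s<\frac13-\frac m{12}$ is nonempty precisely when $m>\frac{14}5$, which is where that hypothesis is actually consumed).

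Consequently the paper uses a \emph{three}-term decomposition $\boldsymbol u^\infty=\boldsymbol u_1^\infty+\boldsymbol u_2^\infty+\boldsymbol b^\infty$ and devotes Section \ref{sec:3.2} and the entire Appendix to showing $\lim_{Q\to\infty}\frac1Q\int_Q^{2Q}\omega^m|\boldsymbol u_2^\infty|^2d\omega=0$ almost surely by a \emph{stochastic} argument: one reduces the components of $\boldsymbol u_2^\infty$ to the oscillatory integrals $\mathbb I$ and $\mathbb J$ in \eqref{e11}--\eqref{e12}, regularizes $\rho$ by mollification, applies the Wick formula to the fourth moment, exploits the explicit singularity structure $|x-y|^{-(3-m)}$ of the covariance kernel (Lemma \ref{lemmaadd4}), and after the change of variables $g=y+z$, $h=y-z$ proves $\mathbb E\int_1^\infty\omega^{m-1}|\mathbb F(\omega,\theta)|^2d\omega<\infty$, from which dominated convergence gives the vanishing Ces\`aro average. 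Your proposal neither isolates the second-order term nor supplies any mechanism for the stochastic cancellation it requires, so as written the argument does not go through; this missing piece is the technical heart of the proof.
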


Since the potential is random, the scattered wave and its far-field pattern are also random fields. In general, the scattering data used to recover the random coefficients involved in the stochastic inverse problems depend on the realizations of the random coefficients. Interestingly, the results in Theorem \ref{thm1} demonstrate that the scattering data given on the left-hand side of (\ref{a5})--(\ref{a6}) are statistically stable, i.e., they are independent of the realizations of the potential. The compressional or shear backscattered far-field pattern generated by any single realization of the random potential can determine with probability one the microlocal strength $\phi$ of the random potential.

To prove Theorem \ref{thm1}, we consider the equivalent Lippmann--Schwinger integral equation and show that the solution can be written as a Born series for sufficiently large frequency, i.e., 
\begin{eqnarray*}
\boldsymbol{u}^{sc}(x,\omega,\theta)=\sum_{j=1}^{\infty}\boldsymbol{u}_j(x,\omega, \theta).
\end{eqnarray*} 
Correspondingly, the far-field pattern $\boldsymbol{u}^{\infty}:=\boldsymbol{u}_{\rm p}^{\infty}+\boldsymbol{u}_{\rm s}^{\infty}$ of the scattered field $\boldsymbol{u}^{sc}$ has the form
\begin{align*}
\boldsymbol{u}^{\infty}(\hat{x},\omega,\theta)=\boldsymbol{u}_1^{\infty}(\hat{x},\omega, \theta)+\boldsymbol{u}_2^{\infty}(\hat{x},\omega, \theta)+\boldsymbol{b}(\hat{x},\omega, \theta),
\end{align*} 
where $\boldsymbol{b}(\hat{x},\omega,\theta):=\sum_{j=3}^{\infty}\boldsymbol{u}_j^{\infty}(\hat{x},\omega, \theta)$ and $\boldsymbol{u}_j^{\infty}$ denotes the far-field pattern of $\boldsymbol{u}_j$. For the first order far-field pattern $\boldsymbol{u}_1^{\infty}$, we show by using the Fourier analysis in Section \ref{sec:3.1} that 
\begin{align*}
\lim_{Q\to\infty}\frac{1}{Q}\int_{Q}^{2Q}\omega^{m}\boldsymbol{u}_{1,\rm p}^{\infty}(-\theta,\omega,\theta)\cdot\overline{\boldsymbol{u}_{1,\rm p}^{\infty}(-\theta,\omega+\tau,\theta)}d\omega &=C_{\rm p}\hat{\phi}(2c_{\rm p}\tau \theta),\\\label{a10}
\lim_{Q\to\infty}\frac{1}{Q}\int_{Q}^{2Q}\omega^{m}\boldsymbol{u}_{1,\rm s}^{\infty}(-\theta,\omega,\theta)\cdot\overline{\boldsymbol{u}_{1,\rm s}^{\infty}(-\theta,\omega+\tau,\theta)}d\omega &=C_{\rm s}\hat{\phi}(2c_{\rm s}\tau \theta),
\end{align*}
where $\boldsymbol{u}_{j,\rm p}^{\infty}$ and $\boldsymbol{u}_{j,\rm s}^{\infty}$ are the compressional and shear far-field patterns of $\boldsymbol{u}_j^{\infty}$ for $j\in{\mathbb N}$. For the second order far-field pattern $\boldsymbol{u}_2^{\infty}$, the higher order far-field pattern $\boldsymbol b$, and their interactions to the first order far-field pattern, we employ microlocal analysis of Fourier integral operators and show that they are negligible in Sections \ref{sec:3.2} and \ref{sec:3.3}, i.e.,
\begin{align*}
\lim_{Q\to\infty}\frac{1}{Q}\int_{Q}^{2Q}\omega^{m}|\boldsymbol{u}_2^{\infty}(-\theta,\omega, \theta)|^2d\omega = 0,\quad
\lim_{Q\to\infty}\frac{1}{Q}\int_{Q}^{2Q}\omega^{m}|\boldsymbol{b}(-\theta,\omega, \theta)|^2d\omega = 0.
\end{align*}

The paper is organized as follows. In Section 2, the well-posedness is established for the direct scattering problem by studying the equivalent Lippmann--Schwinger integral equation; the convergence of the series solution is proved for the 
Lippmann--Schwinger integral equation for sufficiently large frequency. Section 3 is devoted to the inverse scattering problem, where a uniqueness result is obtained to determine the microlocal strength of the random potential. The paper concludes with some general remarks in Section 4.

\section{The direct scattering problem}

In this section, we address the well-posedness of the scattering problem (\ref{a1})--(\ref{a3}) and the regularity of the solution $\boldsymbol u$. The challenge arises from the roughness of the random potential $\rho$. By the following lemma, the potential $\rho$ should be interpreted as a distribution in $W^{\frac{m-3}{2}-\epsilon, p}(D)$ almost surely for any $\epsilon>0$ and $p\in (1, \infty)$. The proof of Lemma \ref{lemmaadd3} can be found in \cite{LW1,LPS08}. 

\begin{lemma}\label{lemmaadd3}
Let $\rho$ be a microlocally isotropic Gaussian random field of order $-m$ in $D\subset\mathbb R^n$ with $m\in[0,n+2)$. 
\begin{itemize}
\item[(i)] If $m\in(n,n+2)$, then $\rho\in C^{0,\eta}(D)$ almost surely for all $\eta\in(0,\frac{m-n}2)$.

\item[(ii)] If $m\in[0,n]$, then $\rho\in W^{\frac{m-n}2-\epsilon,p}(D)$ almost surely for any $\epsilon>0$ and $p\in(1,\infty)$.
\end{itemize}
\end{lemma}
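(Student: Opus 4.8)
The plan is to deduce the regularity of $\rho$ directly from the diagonal singularity of the Schwartz kernel $K(x,y)=\mathbb E[\rho(x)\rho(y)]$ of the covariance operator $\mathcal C_\rho$, using that $\rho$ is Gaussian so that all of its moments are governed by the second moment. Because $\mathcal C_\rho$ is a classical pseudo-differential operator of order $-m$ with principal symbol $\phi(x)|\xi|^{-m}$, an oscillatory-integral computation shows that near the diagonal
\[
K(x,y)=c_{n,m}\,\phi(x)\,|x-y|^{m-n}+R(x,y),
\]
where $R$ collects the smoother contributions of the lower-order symbols and the homogeneous distribution $|\xi|^{-m}$ transforms into a multiple of $|z|^{m-n}$ (logarithmic corrections appearing only at the critical exponents, which the open ranges here exclude). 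The leading exponent $m-n$ controls everything, and the two cases of the lemma correspond to $m-n>0$ (continuous kernel, function-valued $\rho$) and $m-n\le 0$ (singular kernel, distribution-valued $\rho$).

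For part (i), with $m\in(n,n+2)$, I would invoke Kolmogorov's continuity theorem. Since the leading term vanishes on the diagonal when $m>n$, the expansion above yields the increment bound
\[
\mathbb E|\rho(x)-\rho(y)|^2=K(x,x)-2K(x,y)+K(y,y)\lesssim|x-y|^{m-n},
\]
the smooth part of $R$ contributing $O(|x-y|^2)$ and being dominated because $m-n<2$. As $\rho(x)-\rho(y)$ is centered Gaussian, the identity $\mathbb E|\rho(x)-\rho(y)|^{2k}=c_k(\mathbb E|\rho(x)-\rho(y)|^2)^k$ promotes this to $\mathbb E|\rho(x)-\rho(y)|^{2k}\lesssim|x-y|^{k(m-n)}$ for every $k\in\mathbb N$. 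Kolmogorov's theorem then produces a modification that is H\"older continuous with any exponent below $\frac{k(m-n)-n}{2k}$; sending $k\to\infty$ gives $\rho\in C^{0,\eta}(D)$ almost surely for all $\eta\in(0,\frac{m-n}{2})$.

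For part (ii), with $m\in[0,n]$, I would instead control a Sobolev norm in expectation. Fixing $\chi\in C_0^\infty$ with $\chi\equiv1$ on $D$ and using the Bessel-potential description $\|f\|_{W^{s,p}}^p\sim\int|\langle D\rangle^s f(x)|^p\,dx$, Fubini and the Gaussian moment identity reduce the estimate of $\mathbb E\|\chi\rho\|_{W^{s,p}}^p$ to the pointwise bound $\mathbb E|\langle D\rangle^s(\chi\rho)(x)|^2<\infty$ uniformly on $D$. But this quantity is exactly the diagonal value of the kernel of $\langle D\rangle^s\chi\,\mathcal C_\rho\,\chi\langle D\rangle^s$, a classical pseudo-differential operator of order $2s-m$; for $s<\frac{m-n}{2}$ this order is below $-n$, so the operator has a continuous kernel with bounded diagonal. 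Hence $\mathbb E\|\chi\rho\|_{W^{s,p}}^p<\infty$, and since a nonnegative random variable with finite mean is finite almost surely, $\rho\in W^{s,p}(D)$ almost surely for all $s<\frac{m-n}{2}$. Equivalently one may factor $\mathcal C_\rho=A^*A$ with $A$ of order $-m/2$, write $\rho=AW$ in law for a white noise $W\in W^{-n/2-\epsilon,p}_{\rm loc}$, and use that $A$ gains $m/2$ derivatives on $L^p$-based Sobolev spaces.

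The hard part will be the pseudo-differential bookkeeping common to both cases rather than the probabilistic input, which enters only through the elementary Gaussian moment identities. Specifically, I expect the delicate points to be justifying the precise diagonal expansion of $K$ (the exact constant $c_{n,m}$, the order of the remainder $R$, and the sign making the variance positive), and, for part (ii), the $L^p$-boundedness of operators of order $2s-m$ together with the almost-sure $L^p$-Sobolev membership of white noise. These are consequences of the classical symbol calculus and Calder\'on--Zygmund mapping properties of pseudo-differential operators, and it is in verifying that the lower-order terms of $\mathcal C_\rho$ do not spoil the leading estimates that the main technical care is required.
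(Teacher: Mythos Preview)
The paper does not actually prove this lemma; immediately before the statement it says ``The proof of Lemma \ref{lemmaadd3} can be found in \cite{LW1,LPS08}.'' So there is no in-paper argument to compare against beyond the cited references.

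Your proposal is correct and follows essentially the same route as those references. In \cite{LPS08} part (ii) is obtained exactly by writing $\rho$ as $A$ applied to white noise, with $A$ a pseudo-differential operator of order $-m/2$ (your alternative factorization $\mathcal C_\rho=A^*A$), and then invoking the $L^p$-Sobolev mapping properties of $A$ together with the a.s.\ membership $W\in W^{-n/2-\epsilon,p}_{\rm loc}$; your primary argument via the diagonal of the kernel of $\langle D\rangle^s\chi\,\mathcal C_\rho\,\chi\langle D\rangle^s$ is an equivalent formulation. Part (i) via the increment bound $\mathbb E|\rho(x)-\rho(y)|^2\lesssim|x-y|^{m-n}$, Gaussian hypercontractivity, and Kolmogorov's theorem is likewise the standard argument. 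The technical caveats you flag (the exact form of the diagonal expansion of $K$, positivity of the variance, and the handling of lower-order symbol contributions) are precisely the points that require care in the cited proofs, so your assessment of where the work lies is accurate.
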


Since the potential $\rho$ is a distribution, the well-posedness of the problem (\ref{a1})--(\ref{a3}) is examined in the distribution sense by studying the equivalent Lippmann--Schwinger integral equation 
\begin{equation}\label{b1}
(\mathcal I+{\mathcal K}_\omega)\boldsymbol u=\boldsymbol u^{inc},
\end{equation}
where $\mathcal I$ is the identity operator and the operator ${\mathcal K}_{\omega}$ is defined by 
\begin{eqnarray*}
({\mathcal K}_{\omega}\boldsymbol{u})(x):=\int_{\mathbb R^3}\boldsymbol{G}(x,z,\omega)\rho(z)\boldsymbol{u}(z)dz.
\end{eqnarray*}
Here, $\boldsymbol{G}\in {\mathbb C}^{3\times 3}$ denotes the Green tensor for the elastic wave equation and is given by
\begin{eqnarray}\label{b2}
\boldsymbol{G}(x,z,\omega)=\frac{1}{\mu}\Phi(x,z,\kappa_{\rm
s})\boldsymbol{I}+\frac{1}{\omega^2}\nabla_x\nabla_x^\top\Big[\Phi(x,z,
\kappa_{\rm s})-\Phi(x,z,\kappa_{\rm p})\Big],
\end{eqnarray}
where $\boldsymbol{I}$ is the $3\times 3$ identity matrix and $\Phi=\frac{e^{{\rm i}\kappa|x-z|}}{4\pi |x-z|}$ is the fundamental solution of the three-dimensional Helmholtz equation.

In the sequel, we denote by $\boldsymbol V=\{\boldsymbol v=(v_1,v_2,v_3)^\top: v_i\in V,~i=1,2,3\}$ the Cartesian product vector space of the space $V$. The notation $a\lesssim b$ stands for $a\leq Cb$, where $C$ is a positive constant whose value is not required and may change step by step in the proofs.  

\begin{theorem}\label{thm2}
Let $\rho$ satisfy Assumption \ref{as:rho}. Then the scattering problem (\ref{a1})--(\ref{a3}) is well-defined in the distribution sense, and admits a unique solution $\boldsymbol{u}\in \boldsymbol W_{\rm loc}^{\gamma, q}({\mathbb R^3})$ almost surely with $q\in(2,\frac6{7-2m})$ and $\gamma\in (\frac{3-m}2, \frac3{2q}-\frac14)$.
\end{theorem}

\begin{proof}
To address the existence of the solution to the scattering problem (\ref{a1})--(\ref{a3}), we first show that the Lippmann--Schwinger equation (\ref{b1}) admits a unique solution in $\boldsymbol W_{\rm loc}^{\gamma, q}({\mathbb R^3})$, and then prove that the solution to (\ref{b1}) is also a solution to (\ref{a1})--(\ref{a3}) in the distribution sense.

By \cite[Lemma 3.1]{LW3}, the operator ${\mathcal K}_{\omega}:\boldsymbol W^{\gamma,q}(U)\to\boldsymbol W^{\gamma,q}(U)$ is compact, where $\gamma\in(\frac{3-m}2,\frac3q-\frac12)$ and $U\subset\mathbb R^3$ is any bounded open set with a locally Lipschitz boundary. Noting that the incident wave $\boldsymbol{u}^{inc}$ given in (\ref{a2}) is smooth in $\mathbb R^3$, we have $\boldsymbol{u}^{inc}\in\boldsymbol W^{\gamma,q}(U)$. It follows from the Fredholm alternative theorem that (\ref{b1}) admits a unique solution $\boldsymbol u\in\boldsymbol W_{\rm loc}^{\gamma,q}(\mathbb R^3)$ (cf. \cite{LL,LW3}).

Next is to show that the solution $\boldsymbol u$ obtained above is also a solution to the scattering problem (\ref{a1})--(\ref{a3}). Denote by $\boldsymbol{\mathcal{D}}$ the space $\boldsymbol C_0^{\infty}(\mathbb R^3)$ equipped with a locally convex topology, which is also known as the space of test functions, and by $\langle\cdot,\cdot\rangle$ the following dual product between a pair of dual spaces $\boldsymbol V$ and $\boldsymbol V^*$:
\[
\langle\boldsymbol v,\boldsymbol w\rangle:=\int_{\mathbb R^3}\boldsymbol v(x)^\top\overline{\boldsymbol w(x)}dx\quad \forall~\boldsymbol v\in\boldsymbol V,~\boldsymbol w\in\boldsymbol V^*.
\]
Noting that $\boldsymbol u$ satisfies $\boldsymbol u=\boldsymbol u^{inc}-{\mathcal K}_{\omega}\boldsymbol u$, we have for any $\boldsymbol{\psi}\in\boldsymbol{\mathcal{D}}$ that 
\begin{eqnarray*}
&&\langle \Delta^*\boldsymbol{u}+\omega^2\boldsymbol{u}-\rho\boldsymbol{u}, \boldsymbol{\psi}\rangle\\
&=&\langle \Delta^*\boldsymbol{u}^{inc}+\omega^2\boldsymbol{u}^{inc}, \boldsymbol{\psi}\rangle
-\big\langle \int_{\mathbb R^3}(\Delta^*+\omega^2)\boldsymbol{G}(\cdot,z,\omega)\rho(z)\boldsymbol{u}(z)dz, \boldsymbol{\psi}\big\rangle-\langle \rho\boldsymbol{u}, \boldsymbol{\psi}\rangle\\
&=&-\int_{\mathbb R^3}\rho(z)\boldsymbol{u}(z)^{\top}\langle (\Delta^*+\omega^2)\boldsymbol{G}(\cdot,z,\omega),\boldsymbol{\psi}\rangle dz-\langle \rho\boldsymbol{u}, \boldsymbol{\psi}\rangle\\
&=&\int_{\mathbb R^3}\rho(z)\boldsymbol{u}(z)^{\top}\overline{\boldsymbol{\psi}(z)}dz-\langle \rho\boldsymbol{u}, \boldsymbol{\psi}\rangle=0,
\end{eqnarray*}
where we use the facts that $\Delta^*\boldsymbol{u}^{inc}+\omega^2\boldsymbol{u}^{inc}=0$ and $ (\Delta^*+\omega^2)\boldsymbol{G}(x,z,\omega)=-\delta(x-z)\boldsymbol I$ with $\delta$ being the Dirac delta function. Thus, $\boldsymbol{u}$ satisfies the equation (\ref{a1}). Moreover, (\ref{b1}) implies that the scattered wave $\boldsymbol u^{sc}$ has the form
\begin{eqnarray*}\label{b6}
\boldsymbol{u}^{sc}(x)=-\int_{{\mathbb R^3}}\boldsymbol{G}(x,z,\omega)\rho(z)\boldsymbol{u}(z)dz, 
\end{eqnarray*}
which satisfies the Kupradze--Sommerfeld radiation condition (\ref{a3}) since $\boldsymbol{G}(\cdot, z,\omega)$ satisfies the Kupradze--Sommerfeld radiation condition (\ref{a3}). Hence, $\boldsymbol u$ also a solution of the scattering problem (\ref{a1})--(\ref{a3}).

The uniqueness follows directly from the proof of \cite[Theorem 4.3]{LW3}, which requires in addition $\gamma<\frac3{2q}-\frac14$ and concludes that the scattering problem (\ref{a1})--(\ref{a3}) is equivalent to the Lippmann--Schwinger integral equation.
\end{proof}

Due to the equivalence between the scattering problem (\ref{a1})--(\ref{a3}) and the Lippmann--Schwinger integral equation (\ref{b1}), we only need to consider the Lippmann--Schwinger integral equation (\ref{b1}) in order to study the regularity of the solution. 

Define the Born sequence 
\begin{eqnarray}\label{b9}
\boldsymbol{u}_j(x)=-({\mathcal K}_{\omega}\boldsymbol{u}_{j-1})(x),\quad j\in{\mathbb N},
\end{eqnarray}
where the leading term 
\begin{eqnarray*}\label{b10}
\boldsymbol{u}_0(x)=\boldsymbol{u}^{inc}(x).
\end{eqnarray*}
The rest of this section is to show that, for sufficiently large frequency $\omega$, the Born series $\sum_{j=0}^{\infty}\boldsymbol{u}_j$ converges to the solution $\boldsymbol{u}$ of the scattering problem (\ref{a1})--(\ref{a3}). 

Introduce the following weighted $L^p$ space (cf. \cite{LLM,LLW}): 
\begin{eqnarray*}
L^p_{\zeta}(\mathbb R^3):=\left\{f\in L_{\rm loc}^1(\mathbb R^3): \|f\|_{L^p_{\zeta}(\mathbb R^3)}<\infty\right\},
\end{eqnarray*}
where 
\begin{eqnarray*}\label{b11}
\|f\|_{L^p_{\zeta}(\mathbb R^3)}:=\|(1+|\cdot|^2)^{\frac{\zeta}{2}}f\|_{L^p(\mathbb R^3)}=\left(\int_{\mathbb R^3}(1+|x|^2)^{\frac{\zeta p}{2}}|f(x)|^pdx\right)^{\frac{1}{p}}.
\end{eqnarray*}
Let $\mathcal{S}$ be the set of all rapidly decreasing functions on $\mathbb R^3$ and $\mathcal{S'}$ denote the dual space of $\mathcal{S}$. Define the space 
\begin{eqnarray*}\label{b12}
H^{s,p}_{\zeta}(\mathbb R^3):=\left\{f\in\mathcal{S'}: (\mathcal I-\Delta)^{\frac{s}{2}}f\in L^p_{\zeta}(\mathbb R^3)\right\},
\end{eqnarray*}
which is equipped with the norm
\begin{eqnarray*}\label{b13}
\|f\|_{H^{s,p}_{\zeta}(\mathbb R^3)}=\|(\mathcal I-\Delta)^{\frac{s}{2}}f\|_{ L^p_{\zeta}(\mathbb R^3)}.
\end{eqnarray*}

We use the notation $H^{s}_{\zeta}(\mathbb R^3):=H^{s,2}_{\zeta}(\mathbb R^3)$ if in particular $p=2$. Moreover, the space $H_0^{s,p}(\mathbb R^3)$ coincides with the classical Sobolev space $W^{s,p}(\mathbb R^3)$.  These definitions enable us to present the following result which gives the estimates for the operator ${\mathcal K}_{\omega}$.

\begin{lemma}\label{lemma1}
Let $s\in (0, \frac{1}{2})$ and the potential $\rho$ satisfy Assumption \ref{as:rho} with $m\in (3-2s, 3]$. Then the following estimates hold: 
\begin{align*}
\|{\mathcal K}_{\omega}\|_{\mathcal{L}(\boldsymbol H^{s}_{-1}(\mathbb R^3),\boldsymbol H^{s}_{-1}(\mathbb R^3))} &\lesssim \omega^{-1+2s},\\
\|{\mathcal K}_{\omega}\|_{\mathcal{L}(\boldsymbol H^{s}_{-1}(\mathbb R^3),\boldsymbol L^{\infty}(\mathcal V))} &\lesssim \omega^{s+\epsilon+\frac{1}{2}},
\end{align*}
where $\mathcal V\subset\mathbb R^3$ is a bounded domain and $\epsilon>0$ is an arbitrary constant. 
\end{lemma}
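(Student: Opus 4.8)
The plan is to factor $\mathcal K_\omega$ as multiplication by the rough potential $\rho$ followed by the elastic Green operator, to write the latter as a combination of scalar Helmholtz resolvents, and to estimate the two factors separately in weighted Sobolev spaces while tracking the powers of $\omega$; throughout we work in the regime $\omega\ge 1$, and the estimates hold almost surely since $\mathcal K_\omega$ depends on $\rho$. By Lemma \ref{lemmaadd3}(ii), for every $p\in(1,\infty)$ one has $\rho\in W^{\frac{m-3}2-\epsilon_0,p}(D)$ almost surely, and since $m>3-2s$ we may fix $\epsilon_0>0$ so small that $\delta_0:=\frac{3-m}2+\epsilon_0\in(0,s)$; it then suffices to bound $\mathcal K_\omega$ deterministically in terms of $\|\rho\|_{W^{-\delta_0,p}(D)}$ for one convenient $p$.

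\emph{Step 1 (the rough product).} Because $\rho$ is supported in the bounded set $D$, choose $\chi\in C_0^\infty(\mathbb R^3)$ with $\chi\equiv 1$ near $D$; then $\rho\boldsymbol u=\rho(\chi\boldsymbol u)$ and $\|\chi\boldsymbol u\|_{\boldsymbol H^s(\mathbb R^3)}\lesssim\|\boldsymbol u\|_{\boldsymbol H^s_{-1}(\mathbb R^3)}$, the weight $(1+|\cdot|^2)^{-1/2}$ being bounded below on $\operatorname{supp}\chi$. Testing against $\boldsymbol\psi\in\boldsymbol{\mathcal D}$ and writing $\langle\rho(\chi\boldsymbol u),\boldsymbol\psi\rangle=\langle\rho,(\chi\boldsymbol u)\overline{\boldsymbol\psi}\rangle$, one estimates $\|(\chi\boldsymbol u)\overline{\boldsymbol\psi}\|_{W^{\delta_0,p'}(\mathbb R^3)}$ by a fractional Leibniz inequality together with the Sobolev embeddings $H^s(\mathbb R^3)\hookrightarrow W^{\delta_0,a}(\mathbb R^3)$ and $H^{\delta_0}(\mathbb R^3)\hookrightarrow L^b(\mathbb R^3)$; balancing the exponents leads to the choice $p=3/s\in(1,\infty)$ and gives
\[
\rho\boldsymbol u\in\boldsymbol H^{-\delta_0}(\mathbb R^3),\qquad \operatorname{supp}(\rho\boldsymbol u)\subset\overline D,\qquad \|\rho\boldsymbol u\|_{\boldsymbol H^{-\delta_0}(\mathbb R^3)}\lesssim\|\boldsymbol u\|_{\boldsymbol H^s_{-1}(\mathbb R^3)}.
\]

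\emph{Step 2 (Green operator and weighted multiplier bounds).} Let $R_\kappa$ denote convolution with $\Phi(\cdot,\cdot,\kappa)$, i.e.\ the outgoing Helmholtz resolvent, with Fourier multiplier $(|\xi|^2-\kappa^2\mp{\rm i}0)^{-1}$. From (\ref{b2}), $\mathcal K_\omega\boldsymbol u=\mathcal K_\omega^{(1)}\boldsymbol u+\mathcal K_\omega^{(2)}\boldsymbol u$ with $\mathcal K_\omega^{(1)}\boldsymbol u=\frac1\mu R_{\kappa_{\rm s}}(\rho\boldsymbol u)$ and $\mathcal K_\omega^{(2)}\boldsymbol u=\frac1{\omega^2}\nabla\nabla^\top(R_{\kappa_{\rm s}}-R_{\kappa_{\rm p}})(\rho\boldsymbol u)$; a direct computation shows that the matrix symbol of $\mathcal K_\omega^{(2)}$ is $-(c_{\rm s}^2-c_{\rm p}^2)\,\xi\xi^\top\big[(|\xi|^2-\kappa_{\rm s}^2)(|\xi|^2-\kappa_{\rm p}^2)\big]^{-1}$, so $\mathcal K_\omega^{(2)}$, like $\mathcal K_\omega^{(1)}$, gains two derivatives away from the characteristic spheres $|\xi|=\kappa_{\rm p},\kappa_{\rm s}$, which, as $c_{\rm s}>c_{\rm p}$, are a distance $\sim\omega$ apart. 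The tool is the standard Agmon--H\"ormander-type weighted estimate: for $\kappa\ge1$, $\delta>1/2$, a multiplier $q(\xi)\,(|\xi|^2-\kappa^2\mp{\rm i}0)^{-1}$ with $q$ a symbol of order $\alpha$ of size $\lesssim\kappa^{\alpha}$ on $|\xi|\sim\kappa$ induces an operator bounded $L^2_\delta(\mathbb R^3)\to L^2_{-\delta}(\mathbb R^3)$ with norm $\lesssim\kappa^{\alpha-1}$, while away from $|\xi|=\kappa$ it is an ordinary bounded multiplier. Conjugating by suitable powers of $(\mathcal I-\Delta)^{1/2}$ reduces $\|\mathcal K_\omega^{(1)}\|_{\mathcal L(\boldsymbol H^s_{-1},\boldsymbol H^s_{-1})}$ to the $L^2_\delta\to L^2_{-\delta}$ norm with $\delta=1$ of the multiplier $\langle\xi\rangle^{s+\delta_0}(|\xi|^2-\kappa_{\rm s}^2\mp{\rm i}0)^{-1}$, of order $\alpha=s+\delta_0<1$, which by the weighted estimate together with $\kappa_{\rm s}\sim\omega$ and $\delta_0<s$ is $\lesssim\kappa_{\rm s}^{\,s+\delta_0-1}\lesssim\omega^{-1+2s}$; a partition near $|\xi|=\kappa_{\rm p}$, near $|\xi|=\kappa_{\rm s}$, and away from both spheres gives the same bound for $\mathcal K_\omega^{(2)}$. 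Combining with Step 1 (and $\|\rho\boldsymbol u\|_{\boldsymbol H^{-\delta_0}_1}\lesssim\|\rho\boldsymbol u\|_{\boldsymbol H^{-\delta_0}}$, by compact support) proves the first estimate. For the second, one repeats the argument but targets $\boldsymbol H^{3/2+\epsilon}_{-1}(\mathbb R^3)$ in place of $\boldsymbol H^s_{-1}$, so that the relevant order is $\alpha=\frac32+\epsilon+\delta_0\le 2$; the weighted estimates then give $\|\mathcal K_\omega\|_{\mathcal L(\boldsymbol H^s_{-1},\boldsymbol H^{3/2+\epsilon}_{-1})}\lesssim\omega^{\frac32+\epsilon+\delta_0-1}=\omega^{\frac12+\epsilon+\delta_0}\lesssim\omega^{s+\epsilon+\frac12}$ (after relabelling $\epsilon$, using $\delta_0<s$), and composing with the Sobolev embedding $\boldsymbol H^{3/2+\epsilon}_{-1}(\mathbb R^3)\hookrightarrow\boldsymbol L^\infty(\mathcal V)$ for the bounded set $\mathcal V$ completes the proof.

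The main obstacle is Step 2: establishing the weighted $L^2_\delta\to L^2_{-\delta}$ bounds with the sharp power $\kappa^{\alpha-1}$ for multipliers singular on the spheres $|\xi|=\kappa_{\rm p},\kappa_{\rm s}$, handling the matrix structure and the double singularity of $\mathcal K_\omega^{(2)}$, and, above all, the exponent bookkeeping that makes all constraints simultaneously compatible: $\delta_0\in(0,s)$ and $s+\delta_0<2$ (equivalently $\frac{3-m}2<s$, i.e.\ $m>3-2s$), together with $\frac32+\epsilon+\delta_0\le2$ for the $\boldsymbol L^\infty$ bound. The rough product in Step 1, although routine, must be arranged to exploit both the compact support of $\rho$ and the freedom of $p\in(1,\infty)$ afforded by Lemma \ref{lemmaadd3}.
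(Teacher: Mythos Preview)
Your plan is essentially the same as the paper's: both factor $\mathcal K_\omega$ as the elastic Green operator $\mathcal H_\omega$ composed with multiplication by $\rho$, both handle the rough product $\rho\boldsymbol u$ via a cutoff, a fractional Leibniz rule, and Sobolev embeddings exploiting the freedom in $p$ from Lemma~\ref{lemmaadd3}, and both then invoke weighted resolvent bounds for $\mathcal H_\omega$. The only substantive difference is in Step~2: the paper simply quotes the two mapping estimates for $\mathcal H_\omega$ from \cite[Lemma~4.1]{LLW}, whereas you sketch a proof by decomposing the Green tensor into Helmholtz resolvents and appealing to an Agmon--H\"ormander/limiting absorption principle with symbol order $\alpha$. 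Your exponent bookkeeping ($\alpha=s+\delta_0<2s$ for the first bound, $\alpha=\tfrac32+\epsilon+\delta_0\le 2$ for the second, followed by the embedding $\boldsymbol H^{3/2+\epsilon}_{-1}\hookrightarrow\boldsymbol L^\infty(\mathcal V)$) matches what is needed and is consistent with the paper's cited result.

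Two technical points you should flag if you flesh this out. First, the ``conjugation by powers of $(\mathcal I-\Delta)^{1/2}$'' does not literally commute with the spatial weights $(1+|x|^2)^{\pm 1/2}$; one must either absorb the commutators (which are lower order) or quote the weighted Sobolev resolvent estimate $\|R_\kappa\|_{\mathcal L(H^{-t}_\delta,H^{s}_{-\delta})}\lesssim\kappa^{s+t-1}$ directly, as \cite{LLW} does. Second, in your Step~1 you land $\rho\boldsymbol u$ in $\boldsymbol H^{-\delta_0}$ with $\delta_0<s$ and then add the weight via compact support; the paper instead tests against $\boldsymbol v\in\boldsymbol H^s_{-1}$ symmetrically (with cutoffs on both factors) and lands directly in $\boldsymbol H^{-s}_1$. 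Both routes work; yours gives a marginally sharper intermediate bound but the paper's is slightly cleaner since the target space $\boldsymbol H^{-s}_1$ matches the domain of the cited $\mathcal H_\omega$ estimate exactly.
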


\begin{proof}
Define the operator 
\[
({\mathcal H}_{\omega}\boldsymbol v)(x):=\int_{\mathbb R^3}\boldsymbol G(x,z,\omega)\boldsymbol v(z)dz.
\]
Clearly, we have ${\mathcal K}_{\omega}\boldsymbol u={\mathcal H}_{\omega}(\rho\boldsymbol u)$. For any bounded set $\mathcal V$ and arbitrary constant $\epsilon>0$, it is shown in \cite[Lemma 4.1]{LLW} that 
\begin{align*}
\|{\mathcal H}_{\omega}\|_{\mathcal{L}(\boldsymbol H_1^{-s}(\mathbb R^3),\boldsymbol H_{-1}^s(\mathbb R^3))}&\lesssim \omega^{-1+2s},\\
\|{\mathcal H}_{\omega}\|_{\mathcal{L}(\boldsymbol H_1^{-s}(\mathbb R^3),\boldsymbol L^\infty(\mathcal V))}&\lesssim \omega^{s+\epsilon+\frac12}.
\end{align*}
Next is to show that $\rho\boldsymbol u\in\boldsymbol H^{-s}_1(\mathbb R^3)$ for any $\boldsymbol u\in\boldsymbol H^s_{-1}(\mathbb R^3)$. 

For any $\boldsymbol u,\boldsymbol v\in\boldsymbol{\mathcal{S}}$, it holds
$
\langle\rho\boldsymbol u,\boldsymbol v\rangle=\langle\rho,\boldsymbol u\cdot\boldsymbol v\rangle.
$
Define a cutoff function $\chi\in C_0^\infty(\mathbb R^3)$, which has a bounded support $U$ with a locally Lipschitz boundary such that $D\subset U$ and $\chi(x)\equiv1$ if $x\in D$. It is clear to note that 
\begin{align*}
|\langle\rho\boldsymbol u,\boldsymbol v\rangle|&=|\langle\rho,(\chi\boldsymbol u)\cdot(\chi\boldsymbol v)\rangle|\\
&=|\langle(\mathcal I-\Delta)^{-\gamma}\rho,(\mathcal I-\Delta)^\gamma[(\chi\boldsymbol u)\cdot(\chi\boldsymbol v)]\rangle|\\
&\lesssim\|\rho\|_{W^{-\gamma,p}(D)}\|(\mathcal I-\Delta)^\gamma[(\chi\boldsymbol u)\cdot(\chi\boldsymbol v)]\|_{L^{p'}(\mathbb R^3)},
\end{align*}
where $p,p'\in(1,\infty)$ are conjugate indices satisfying $\frac1p+\frac1{p'}=1$. Using the fractional Leibniz principle leads to
\[
\|(\mathcal I-\Delta)^\gamma[(\chi\boldsymbol u)\cdot(\chi\boldsymbol v)]\|_{L^{p'}(\mathbb R^3)}\le\|\chi\boldsymbol u\|_{\boldsymbol L^2(U)}\|\chi\boldsymbol v\|_{\boldsymbol W^{\gamma,q}(U)}+\|\chi\boldsymbol v\|_{\boldsymbol L^2(U)}\|\chi\boldsymbol u\|_{\boldsymbol W^{\gamma,q}(U)},
\]
where $q$ satisfies $\frac1{p'}=\frac12+\frac1q$. Since $m\in(3-2s,3]$, there exists $\gamma\in(\frac{3-m}2,s]$ and $q\in(1,2]$ such that $\frac1q>\frac12-\frac{s-\gamma}3$, which implies from the Kondrachov compact embedding theorem that 
$\boldsymbol H^s(U)\hookrightarrow\boldsymbol W^{\gamma,q}(U)$. Hence,
\begin{align*}
|\langle\rho\boldsymbol u,\boldsymbol v\rangle|&\lesssim \|\rho\|_{W^{-\gamma,p}(D)}\|\chi\boldsymbol u\|_{\boldsymbol W^{\gamma,q}(U)}\|\chi\boldsymbol v\|_{\boldsymbol W^{\gamma,q}(U)}\\
&\lesssim \|\rho\|_{W^{-\gamma,p}(D)}\|\chi\boldsymbol u\|_{\boldsymbol H^s(U)}\|\chi\boldsymbol v\|_{\boldsymbol H^s(U)}\\
&\lesssim \|\rho\|_{W^{-\gamma,p}(D)}\|\boldsymbol u\|_{\boldsymbol H_{-1}^s(\mathbb R^3)}\|\boldsymbol v\|_{\boldsymbol H_{-1}^s(\mathbb R^3)},
\end{align*}
where in the last step we use the fact $\|\chi\boldsymbol u\|_{\boldsymbol H^s(U)}\lesssim\|\boldsymbol u\|_{\boldsymbol H_{-2}^s(\mathbb R^3)}\le\|\boldsymbol u\|_{\boldsymbol H_{-1}^s(\mathbb R^3)}$ (cf. \cite{CHL,LLM}). The proof is completed by noting  
\[
\|\rho\boldsymbol u\|_{\boldsymbol H^{-s}_1(\mathbb R^3)}:=\sup_{\boldsymbol v\in\boldsymbol H^s_{-1}(\mathbb R^3)}\frac{|\langle\rho\boldsymbol u,\boldsymbol v\rangle|}{\|\boldsymbol v\|_{\boldsymbol H_{-1}^s(\mathbb R^3)}}\lesssim\|\rho\|_{W^{-\gamma,p}(D)}\|\boldsymbol u\|_{\boldsymbol H_{-1}^s(\mathbb R^3)}
\]
and $\rho\in W^{\frac{m-3}2-\epsilon,p}(D)\subset W^{-\gamma,p}(D)$ according to Lemma \ref{lemmaadd3}.
\end{proof}

By the definition of $\boldsymbol{u}_j$ given in \eqref{b9}, we have  
\begin{eqnarray*}\label{b15}
(\mathcal I+{\mathcal K}_{\omega})\sum_{j=0}^N\boldsymbol{u}_j=\boldsymbol{u}_0+(-1)^N{\mathcal K}_{\omega}^{N+1}\boldsymbol{u}_0.
\end{eqnarray*}
For the leading term $\boldsymbol{u}_0=\boldsymbol{u}^{inc}$, a simple calculation yields 
\begin{eqnarray*}\label{b16}
\|\boldsymbol{u}_0\|_{\boldsymbol L^2(D)}\lesssim1,\quad \|\boldsymbol{u}_0\|_{\boldsymbol H^1(D)}\lesssim \omega.
\end{eqnarray*}
Using the interpolation inequality \cite{L3} leads to  
\begin{align*}
\|\boldsymbol{u}_0\|_{\boldsymbol H^s_{-1}(D)}&=\|(1+|\cdot|^2)^{-\frac{1}{2}}(\mathcal I-\Delta)^{\frac{s}{2}}\boldsymbol{u}_0\|_{\boldsymbol L^2(D)}\le \|(\mathcal I-\Delta)^{\frac{s}{2}}\boldsymbol{u}_0\|_{\boldsymbol L^2(D)}\\
&\lesssim\|\boldsymbol{u}_0\|_{\boldsymbol H^s(D)}\lesssim \|\boldsymbol{u}_0\|^{1-s}_{\boldsymbol L^2(D)}\|\boldsymbol{u}_0\|_{\boldsymbol H^1(D)}^s\lesssim \omega^s,
\end{align*}
which, together with Lemma \ref{lemma1}, gives that 
\begin{align*}
\|{\mathcal K}_{\omega}^{N+1}\boldsymbol{u}_0\|_{\boldsymbol H^s_{-1}(\mathbb R^3)}&\lesssim\|{\mathcal K}_{\omega}\|^{N}_{\mathcal{L}(\boldsymbol H^{s}_{-1}(\mathbb R^3),\boldsymbol H^{s}_{-1}(\mathbb R^3))}\|{\mathcal K}_{\omega}\|_{\mathcal{L}(\boldsymbol H^{s}_{-1}(D),\boldsymbol H^{s}_{-1}(\mathbb R^3))}\|\boldsymbol{u}_0\|_{\boldsymbol H^s_{-1}(D)}\\\label{b18}
&\lesssim \omega^{(-1+2s)(N+1)}\omega^s\to 0 \quad{\rm as}\;\; N\to\infty.
\end{align*}
Hence, we conclude 
\begin{eqnarray*}\label{b19}
(\mathcal I+{\mathcal K}_{\omega})\sum_{j=0}^N\boldsymbol{u}_j\to \boldsymbol{u}_0=(\mathcal I+{\mathcal K}_{\omega})\boldsymbol{u} \quad{\rm as}\;\; N\to\infty.
\end{eqnarray*}
Noting the invertibility of the operator $\mathcal I+{\mathcal K}_{\omega}$, we have 
\begin{eqnarray}\label{b20}
\boldsymbol{u}=\sum_{j=0}^{\infty}\boldsymbol{u}_j\quad{\rm in}\quad\boldsymbol H^s_{-1}(\mathbb R^3).
\end{eqnarray}
Moreover, for any bounded domain $U\subset\mathbb R^3$, it holds
\begin{align*}
&\|\boldsymbol{u}-\sum_{j=0}^N\boldsymbol{u}_j\|_{\boldsymbol L^{\infty}(U)}\lesssim \sum_{j=N+1}^{\infty}\|{\mathcal K}_{\omega}^j\boldsymbol{u}_0\|_{\boldsymbol L^{\infty}(U)}\\
&\lesssim\sum_{j=N+1}^{\infty}\|{\mathcal K}_{\omega}\|_{\mathcal{L}(\boldsymbol H^{s}_{-1}(\mathbb R^3),\boldsymbol L^{\infty}(U))}\|{\mathcal K}_{\omega}\|^{j-2}_{\mathcal{L}(\boldsymbol H^{s}_{-1}(\mathbb R^3),\boldsymbol H^{s}_{-1}(\mathbb R^3))}\|{\mathcal K}_{\omega}\|_{\mathcal{L}(\boldsymbol H^{s}_{-1}(D),\boldsymbol H^s_{-1}(\mathbb R^3))}\|\boldsymbol{u}_0\|_{\boldsymbol H^s_{-1}(D)}\\
&\lesssim\sum_{j=N+1}^{\infty}\omega^{s+\epsilon+\frac{1}{2}+(j-1)(-1+2s)+s}\to 0\quad{\rm as}\;\;N\to\infty,
\end{align*}
which implies that the convergence (\ref{b20}) also holds in $\boldsymbol L^{\infty}(U)$.

\section{The inverse scattering problem}

This section is to study the inverse problem, which aims to determine the microlocal strength $\phi$ of the random potential $\rho$ from the backscattered far-field pattern of the scattered wave. 

By (\ref{b20}), we rewrite the scattered wave as
\begin{eqnarray}\label{c1}
\boldsymbol{u}^{sc}(x) = \boldsymbol{u}_1(x)+\boldsymbol{u}_2(x)+\boldsymbol{b}(x),
\end{eqnarray}
where the residual $\boldsymbol{b}(x):=\sum_{j=3}^{\infty}\boldsymbol{u}_j(x)$.
Note that 
\begin{eqnarray}\label{c2}
\boldsymbol{u}_j(x)=-({\mathcal K}_{\omega}\boldsymbol{u}_{j-1})(x)=-\int_{\mathbb R^3}\boldsymbol{G}(x,z,\omega)\rho(z)\boldsymbol{u}_{j-1}(z)dz,
\end{eqnarray}
where the Green tensor $\boldsymbol{G}$ has the asymptotic behavior (cf. \cite[Section 2.2]{CS15})
\begin{eqnarray}\label{c3}
\boldsymbol{G}(x,z,\omega)&=&\frac{c_{\rm p}^2}{4\pi}\hat{x}\otimes\hat{x}\frac{e^{{\rm i}\kappa_{\rm p}|x|}}{|x|}e^{-{\rm i}\kappa_{\rm p}\hat{x}\cdot z}+\frac{c_{\rm s}^2}{4\pi}(\boldsymbol{I}-\hat{x}\otimes\hat{x})\frac{e^{{\rm i}\kappa_{\rm s}|x|}}{|x|}e^{-{\rm i}\kappa_{\rm s}\hat{x}\cdot z}+O(|x|^{-2}).
\end{eqnarray}
Here, the symbol $\hat{x}\otimes\hat{x}:=\hat{x}^{\top}\hat{x}\in\mathbb R^{3\times 3}$ is the tensor product. Substituting (\ref{c3}) into (\ref{c2}) leads to 
\begin{eqnarray}\label{c4}
\boldsymbol{u}_j(x)=\frac{e^{{\rm i}\kappa_{\rm p}|x|}}{|x|}\boldsymbol{u}^{\infty}_{j,\rm p}(\hat{x})+\frac{e^{{\rm i}\kappa_{\rm s}|x|}}{|x|}\boldsymbol{u}^{\infty}_{j,\rm s}(\hat{x})+O(|x|^{-2}),
\end{eqnarray}
where $\boldsymbol{u}_{j,\rm p}^{\infty}$ and $\boldsymbol{u}_{j,\rm s}^{\infty}$ are the compressional and shear far-field patterns of $\boldsymbol u_j$, respectively. A simple calculation from \eqref{c2} and \eqref{c4} gives 
\begin{equation}\label{c5}
\begin{aligned}
\boldsymbol{u}_{j,\rm p}^{\infty}(\hat{x}):=&-\frac{c_{\rm p}^2}{4\pi}\hat{x}\otimes\hat{x}\int_{\mathbb R^3}e^{-{\rm i}\kappa_{\rm p}\hat{x}\cdot z}\rho(z)\boldsymbol{u}_{j-1}(z)dz,\\%\label{c6}
\boldsymbol{u}_{j,\rm s}^{\infty}(\hat{x}):=&-\frac{c_{\rm s}^2}{4\pi}(\boldsymbol{I}-\hat{x}\otimes\hat{x})\int_{\mathbb R^3}e^{-{\rm i}\kappa_{\rm s}\hat{x}\cdot z}\rho(z)\boldsymbol{u}_{j-1}(z)dz.
\end{aligned}
\end{equation}
Combining (\ref{a4}), (\ref{c1}) and (\ref{c4}), we get the following compressional and shear far-field patterns $\boldsymbol u_{\rm p}^\infty$ and $\boldsymbol u_{\rm s}^\infty$ of the scattered wave $\boldsymbol u^{sc}$:
\begin{equation}\label{c7}
\begin{split}
\boldsymbol{u}_{\rm p}^{\infty}(\hat{x})&=\boldsymbol{u}_{1,\rm p}^{\infty}(\hat{x})+\boldsymbol{u}_{2,\rm p}^{\infty}(\hat{x})+\boldsymbol{b}_{\rm p}^{\infty}(\hat{x}),\quad \boldsymbol{b}_{\rm p}^{\infty}(\hat{x}):=\sum_{j=3}^{\infty}\boldsymbol{u}_{j,\rm p}^{\infty}(\hat{x}),\\
\boldsymbol{u}_{\rm s}^{\infty}(\hat{x})&=\boldsymbol{u}_{1,\rm s}^{\infty}(\hat{x})+\boldsymbol{u}_{2,\rm s}^{\infty}(\hat{x})+\boldsymbol{b}_{\rm s}^{\infty}(\hat{x}),\quad \boldsymbol{b}_{\rm s}^{\infty}(\hat{x}):=\sum_{j=3}^{\infty}\boldsymbol{u}_{j,\rm s}^{\infty}(\hat{x}).
\end{split}
\end{equation}

As mentioned in the introduction, two types of incident plane waves are used as the illumination and two corresponding backscattered far-field patterns are measured as the data to reconstruct the strength $\phi$: one is the compressional plane wave $\boldsymbol{u}_0(x)=\boldsymbol{u}_{\rm p}^{inc}(x)=\theta e^{{\rm i}\kappa_{\rm p}x\cdot \theta}$ and the compressional far-field pattern $\boldsymbol u_{\rm p}^\infty(\hat x)$ is measured; another is the shear plane wave $\boldsymbol{u}_0(x)=\boldsymbol{u}_{\rm s}^{inc}(x)=\theta^{\perp}e^{{\rm i}\kappa_{\rm s}x\cdot \theta}$ and the shear far-field pattern $\boldsymbol u_{\rm s}^\infty(\hat x)$ is measured. 

To prove Theorem \ref{thm1}, we analyze separately the three terms in the far-field patterns \eqref{c7}: the first order far-field patterns $\boldsymbol{u}_{1,\rm p}^{\infty}$ and $\boldsymbol{u}_{1,\rm s}^{\infty}$, the second order far-field patterns $\boldsymbol{u}_{2,\rm p}^{\infty}$ and $\boldsymbol{u}_{2,\rm s}^{\infty}$, and the higher order far-field patterns $\boldsymbol{b}_{\rm p}^{\infty}$ and $\boldsymbol{b}_{\rm s}^{\infty}$. 

\subsection{The first order far-field patterns}\label{sec:3.1}

We begin with analyzing the first order backscattered far-field patterns by employing the Fourier analysis and ergodicity arguments. Below is the main result of this subsection.

\begin{theorem}\label{thm3}
Let the random potential $\rho$ satisfy Assumption \ref{as:rho}, $\boldsymbol{u}_{1,\rm p}^{\infty}$ and  $\boldsymbol{u}_{1,\rm s}^{\infty}$ be given by (\ref{c5}) with $\boldsymbol u_0=\boldsymbol u^{inc}_{\rm p}$ and $\boldsymbol u_0=\boldsymbol u^{inc}_{\rm s}$, respectively. For all $\theta\in {\mathbb S}^2$ and $\tau\geq 0$, it holds almost surely that 
\begin{align}\label{d1}
\lim_{Q\to\infty}\frac{1}{Q}\int_{Q}^{2Q}\omega^{m}\boldsymbol{u}_{1,\rm p}^{\infty}(-\theta,\omega,\theta)\cdot\overline{\boldsymbol{u}_{1, \rm p}^{\infty}(-\theta,\omega+\tau,\theta)}d\omega&= C_{\rm p}\hat{\phi}(2c_{\rm p}\tau \theta),\\\label{d2}
\lim_{Q\to\infty}\frac{1}{Q}\int_{Q}^{2Q}\omega^{m}\boldsymbol{u}_{1, \rm s}^{\infty}(-\theta,\omega,\theta)\cdot\overline{\boldsymbol{u}_{1, \rm s}^{\infty}(-\theta,\omega+\tau,\theta)}d\omega&= C_{\rm s}\hat{\phi}(2c_{\rm s}\tau \theta),
\end{align}
where $C_{\rm p}$ and $C_{\rm s}$ are constants defined in Theorem \ref{thm1}.
\end{theorem}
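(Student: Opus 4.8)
The plan is to reduce each limit on the left-hand side to a deterministic quantity via an explicit Fourier-analytic computation combined with an ergodicity argument, treating the compressional case in full and the shear case by the identical scheme. First I would substitute $\boldsymbol u_0=\boldsymbol u^{inc}_{\rm p}(x)=\theta e^{{\rm i}\kappa_{\rm p}x\cdot\theta}$ into the $j=1$ formula in \eqref{c5} with $\hat x=-\theta$, obtaining
\[
\boldsymbol u^{\infty}_{1,\rm p}(-\theta,\omega,\theta)=-\frac{c_{\rm p}^2}{4\pi}(\theta\otimes\theta)\theta\int_{\mathbb R^3}e^{{\rm i}\kappa_{\rm p}\theta\cdot z}\rho(z)e^{{\rm i}\kappa_{\rm p}\theta\cdot z}dz
=-\frac{c_{\rm p}^2}{4\pi}\,\theta\,\langle\rho,e^{2{\rm i}\kappa_{\rm p}\theta\cdot(\cdot)}\rangle,
\]
since $(\theta\otimes\theta)\theta=\theta$ and $|\theta|=1$. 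Writing this pairing against the (real) exponential as essentially $\widehat\rho$ evaluated at frequency $-2\kappa_{\rm p}\theta=-2c_{\rm p}\omega\theta$, the integrand of the $\omega$-average becomes, up to the prefactor $(c_{\rm p}^2/4\pi)^2$, the product $\widehat\rho(-2c_{\rm p}\omega\theta)\,\overline{\widehat\rho(-2c_{\rm p}(\omega+\tau)\theta)}$ with $|\theta|^2=1$ coming from $\theta\cdot\bar\theta$. Thus the whole question becomes: show that
\[
\lim_{Q\to\infty}\frac1Q\int_Q^{2Q}\omega^m\,\widehat\rho(-2c_{\rm p}\omega\theta)\,\overline{\widehat\rho(-2c_{\rm p}(\omega+\tau)\theta)}\,d\omega
\]
exists almost surely and equals a constant times $\widehat\phi(2c_{\rm p}\tau\theta)$.

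The core of the argument is then the ergodicity/law-of-large-numbers step for this frequency average. I would follow the scheme of \cite{LPS08,CHL,LLW}: use Assumption \ref{as:rho} to compute the expectation of the integrand, namely
\[
\mathbb E\!\left[\widehat\rho(-2c_{\rm p}\omega\theta)\overline{\widehat\rho(-2c_{\rm p}(\omega+\tau)\theta)}\right]
=\langle\mathcal C_\rho\,e^{2{\rm i}c_{\rm p}(\omega+\tau)\theta\cdot(\cdot)},e^{2{\rm i}c_{\rm p}\omega\theta\cdot(\cdot)}\rangle,
\]
and exploit that $\mathcal C_\rho$ is a classical $\Psi$DO with principal symbol $\phi(x)|\xi|^{-m}$ so that, for large $\omega$, this equals $\phi$-weighted principal-symbol evaluation at $\xi\sim 2c_{\rm p}\omega\theta$ plus a lower-order remainder: precisely, it behaves like $|2c_{\rm p}\omega\theta|^{-m}\widehat\phi(2c_{\rm p}\tau\theta)+O(\omega^{-m-1})$ (here the difference of the two frequency arguments is the fixed vector $2c_{\rm p}\tau\theta$, which is exactly why $\widehat\phi$ appears at that point). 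Multiplying by $\omega^m$ cancels the decay and produces, in expectation, the constant $(2c_{\rm p})^{-m}\widehat\phi(2c_{\rm p}\tau\theta)$; reconciling this with the stated $C_{\rm p}=2^{-m-4}\pi^{-2}c_{\rm p}^{4-m}$ accounts for the $(c_{\rm p}^2/4\pi)^2$ prefactor. Then, to upgrade convergence in expectation to almost-sure convergence of the $Q$-average, I would estimate the variance of $\frac1Q\int_Q^{2Q}\omega^m(\cdots)\,d\omega$ using the Gaussianity of $\rho$ (Isserlis/Wick to reduce fourth moments to products of covariances) and the $\Psi$DO symbol bounds, show it is summable along $Q=2^k$, invoke Borel--Cantelli for that subsequence, and finally interpolate to the continuous limit by a monotonicity/continuity-in-$Q$ argument, exactly as in the cited references.

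The main obstacle I anticipate is the variance estimate and the stationary-phase bookkeeping that controls it: one must show that the oscillatory integrals arising from the fourth-moment terms decay fast enough in $\omega$ (after the $\omega^m$ weight) so that the variance of the $Q$-average is $o(1)$, indeed summable along dyadic $Q$. This is where the hypothesis $m\in(\tfrac{14}5,3]$ enters — the extra room beyond $m>2$ is precisely what is needed to absorb the loss from the $\omega^m$ factor against the decay of the remainder symbol and of the off-diagonal covariance contributions, and to make the Borel--Cantelli step go through. A secondary technical point is making rigorous the "principal symbol governs the leading asymptotics of $\langle\mathcal C_\rho e^{{\rm i}\omega\cdot},e^{{\rm i}\omega\cdot}\rangle$" claim with uniform-in-$\theta$ control, which I would handle by the standard $\Psi$DO expansion together with the compact support of $\phi$; the dependence on $\theta$ is harmless because all estimates are uniform over the compact set $\mathbb S^2$.
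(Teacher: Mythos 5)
Your proposal is correct and follows essentially the same route as the paper: the explicit identity $\boldsymbol u^{\infty}_{1,\rm p}(-\theta,\omega,\theta)=-\tfrac{c_{\rm p}^2}{4\pi}\theta\int e^{2{\rm i}\kappa_{\rm p}\theta\cdot z}\rho(z)\,dz$, the expectation computed from the principal symbol of $\mathcal C_\rho$ giving $C_{\rm p}\hat\phi(2c_{\rm p}\tau\theta)\omega^{-m}+O(\omega^{-m-1})$, and an almost-sure upgrade via Gaussian fourth moments and covariance decay (the paper polarizes into real and imaginary parts, uses $\mathbb E[(X^2-\mathbb EX^2)(Y^2-\mathbb EY^2)]=2(\mathbb E[XY])^2$, and closes with a direct second-moment computation plus Fatou rather than your Borel--Cantelli-along-dyadic-$Q$-plus-interpolation, but these are interchangeable and rest on the same integration-by-parts estimates $|\mathbb E[u(\omega_1)\overline{u(\omega_2)}]|\lesssim\omega_1^{-m}(1+|\omega_1-\omega_2|)^{-N}$). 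One correction: the restriction $m\in(\tfrac{14}5,3]$ plays no role in this theorem, which holds for the full range $m\in(2,3]$ of Assumption \ref{as:rho} --- since the off-diagonal covariance decays faster than any polynomial, the $\omega^m$ weight is exactly absorbed by the $\omega^{-m}$ on-diagonal decay; the condition $m>\tfrac{14}5$ is needed only later, to make the higher-order far-field contributions negligible in the proof of Theorem \ref{thm1}.
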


The proof of Theorem \ref{thm3} is left at the end of this subsection. The following lemmas are useful for the proof of Theorem \ref{thm3}.

\begin{lemma}\label{lemma2}
Under assumptions in Theorem \ref{thm3}, for all $\theta\in {\mathbb S}^2$ and $\tau\geq 0$, it holds that 
\begin{align}\label{x1}
\lim_{Q\to\infty}\frac{1}{Q}\int_{Q}^{2Q}\omega^{m}{\mathbb E}\left[\boldsymbol{u}_{1,\rm p}^{\infty}(-\theta,\omega,\theta)\cdot\overline{\boldsymbol{u}_{1, \rm p}^{\infty}(-\theta,\omega+\tau,\theta)}\right]d\omega&= C_{\rm p}\hat{\phi}(2c_{\rm p}\tau \theta),\\\label{x2}
\lim_{Q\to\infty}\frac{1}{Q}\int_{Q}^{2Q}\omega^{m}{\mathbb E}\left[\boldsymbol{u}_{1, \rm s}^{\infty}(-\theta,\omega,\theta)\cdot\overline{\boldsymbol{u}_{1, \rm s}^{\infty}(-\theta,\omega+\tau,\theta)}\right]d\omega&= C_{\rm s}\hat{\phi}(2c_{\rm s}\tau \theta),
\end{align}
where $C_{\rm p}$ and $C_{\rm s}$ are constants defined in Theorem \ref{thm1}.
\end{lemma}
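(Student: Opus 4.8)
The plan is to compute the expectation $\mathbb E[\boldsymbol u_{1,\rm p}^\infty(-\theta,\omega,\theta)\cdot\overline{\boldsymbol u_{1,\rm p}^\infty(-\theta,\omega+\tau,\theta)}]$ explicitly using the formula \eqref{c5} with $\boldsymbol u_0=\boldsymbol u_{\rm p}^{inc}=\theta e^{{\rm i}\kappa_{\rm p}x\cdot\theta}$, and then to identify the limit of its frequency average with the Fourier transform of the microlocal strength $\phi$. First I would substitute $\hat x=-\theta$ into \eqref{c5}: since $(-\theta)\otimes(-\theta)=\theta\otimes\theta$, the projection $\hat x\otimes\hat x$ applied to the polarization vector $\theta$ simply returns $\theta$, so $\boldsymbol u_{1,\rm p}^\infty(-\theta,\omega,\theta)=-\frac{c_{\rm p}^2}{4\pi}\theta\int_{\mathbb R^3}e^{{\rm i}\kappa_{\rm p}\theta\cdot z}\rho(z)e^{{\rm i}\kappa_{\rm p}\theta\cdot z}dz=-\frac{c_{\rm p}^2}{4\pi}\theta\int_{\mathbb R^3}e^{2{\rm i}\kappa_{\rm p}\theta\cdot z}\rho(z)dz$, i.e. it is $-\frac{c_{\rm p}^2}{4\pi}\theta\,\widehat\rho(-2\kappa_{\rm p}\theta)$ (up to the sign convention in the Fourier transform). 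Taking the dot product with the conjugate at frequency $\omega+\tau$ and using $\theta\cdot\theta=1$, the expectation becomes $\frac{c_{\rm p}^4}{16\pi^2}\mathbb E[\widehat\rho(-2\kappa_{\rm p}\theta)\overline{\widehat\rho(-2\kappa_{\rm p}'\theta)}]$ where $\kappa_{\rm p}'=c_{\rm p}(\omega+\tau)$; this equals $\frac{c_{\rm p}^4}{16\pi^2}\langle\mathcal C_\rho e^{-2{\rm i}\kappa_{\rm p}\theta\cdot(\cdot)},e^{-2{\rm i}\kappa_{\rm p}'\theta\cdot(\cdot)}\rangle$ by the definition of the covariance operator in Assumption \ref{as:rho}.

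Next I would invoke the pseudo-differential structure of $\mathcal C_\rho$. Writing $\mathcal C_\rho$ via its symbol $c(x,\xi)$ with principal part $\phi(x)|\xi|^{-m}$, the pairing $\langle\mathcal C_\rho e^{{\rm i}x\cdot\eta_1},e^{{\rm i}x\cdot\eta_2}\rangle$ with $\eta_1=-2\kappa_{\rm p}\theta$, $\eta_2=-2\kappa_{\rm p}'\theta$ equals $\int_{\mathbb R^3}c(x,\eta_1)e^{{\rm i}x\cdot(\eta_1-\eta_2)}dx=\int c(x,\eta_1)e^{-2{\rm i}c_{\rm p}\tau\theta\cdot x}dx$. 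The principal symbol contributes $|\eta_1|^{-m}\widehat\phi(2c_{\rm p}\tau\theta)=(2c_{\rm p}\omega)^{-m}\widehat\phi(2c_{\rm p}\tau\theta)$, and the lower-order terms of the classical symbol decay faster in $|\eta_1|$, hence give a contribution of order $O(\omega^{-m-1})$. Multiplying by $\omega^m$ cancels the $(2c_{\rm p}\omega)^{-m}$ factor exactly, producing the constant $2^{-m}c_{\rm p}^{-m}$; combined with the prefactor $\frac{c_{\rm p}^4}{16\pi^2}$ this gives $2^{-m-4}\pi^{-2}c_{\rm p}^{4-m}=C_{\rm p}$ times $\widehat\phi(2c_{\rm p}\tau\theta)$, and the remainder is $O(\omega^{-1})$, which vanishes under the average $\frac1Q\int_Q^{2Q}$. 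The shear case \eqref{x2} is entirely parallel, replacing $\theta$ by $\theta^\perp$ in the incident polarization and noting $(\boldsymbol I-\theta\otimes\theta)\theta^\perp=\theta^\perp$ and $\theta^\perp\cdot\theta^\perp=1$, with $c_{\rm s}$, $\kappa_{\rm s}$ in place of $c_{\rm p}$, $\kappa_{\rm p}$.

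The main obstacle, and the point requiring care, is the rigorous treatment of the symbol expansion and the remainder estimate: $\widehat\rho$ is only a tempered distribution rather than a function, so the formal manipulations with $\widehat\rho(\xi)$ must be justified through the duality pairing and the mapping properties of the classical pseudo-differential operator $\mathcal C_\rho$, and one must verify that the non-principal part of the symbol, together with the error in the asymptotic expansion \eqref{c3} of the Green tensor that was already absorbed into the $O(|x|^{-2})$ term, indeed yields a contribution of order $\omega^{-m-1}$ (or at worst $o(\omega^{-m})$) uniformly enough to survive multiplication by $\omega^m$ and integration in $\omega$. I would handle this by first recording the needed properties of $\mathcal C_\rho$ (boundedness between appropriate weighted Sobolev spaces, and the expansion of its symbol with the standard remainder bounds for classical $\Psi$DOs), then carefully bounding each piece; the Fubini-type interchange of expectation with the $z$-integrals is legitimate since $\rho$ is compactly supported in $D$ and $\phi\in C_0^\infty(D)$. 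Once the expectation is computed, passing to the limit $Q\to\infty$ is immediate because, after the cancellation, the integrand converges pointwise in $\omega$ to the constant $C_{\rm p}\widehat\phi(2c_{\rm p}\tau\theta)$ up to an $O(\omega^{-1})$ error whose average over $[Q,2Q]$ tends to zero.
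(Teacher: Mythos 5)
Your proposal is correct and follows essentially the same route as the paper: reduce the backscattered first-order far-field pattern to $-\frac{c_{\rm p}^2}{4\pi}\theta\int e^{2{\rm i}\kappa_{\rm p}\theta\cdot z}\rho(z)\,dz$ via the projection identities, express the expectation through the covariance operator, use the classical symbol expansion $s_\rho(x,\xi)=\phi(x)|\xi|^{-m}+a(x,\xi)$ with $a\in\mathcal S^{-m-1}$ to extract $(2c_{\rm p}\omega)^{-m}\hat\phi(2c_{\rm p}\tau\theta)$ plus an $O(\omega^{-m-1})$ remainder, and let the $\omega^m$-weighted average kill the remainder. The paper phrases the middle step via the oscillatory-integral form of the kernel $K_\rho$ and a delta function rather than the direct pairing $\langle\mathcal C_\rho e^{{\rm i}(\cdot)\cdot\eta_1},e^{{\rm i}(\cdot)\cdot\eta_2}\rangle$, but this is the same computation.
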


\begin{proof}
Using (\ref{a2}), (\ref{c5}), and noting $(\theta\otimes \theta)\theta=\theta$ and $(\theta\otimes \theta)\theta^{\perp}=0$, we obtain
\begin{eqnarray*}\label{d4}
\boldsymbol{u}^{\infty}_{1,\rm p}(-\theta,\omega,\theta)=-\frac{c_{\rm p}^2}{4\pi}\theta\otimes \theta\int_{\mathbb R^3}e^{{\rm i}\kappa_{\rm p}\theta\cdot z}\rho(z) \theta e^{{\rm i}\kappa_{\rm p}\theta\cdot z}dz
=-\frac{c_{\rm p}^2}{4\pi} \theta\int_{\mathbb R^3}e^{2{\rm i}\kappa_{\rm p}\theta\cdot z}\rho(z)dz
\end{eqnarray*}
and 
\begin{eqnarray*}\label{d5}
\boldsymbol{u}^{\infty}_{1,\rm s}(-\theta,\omega,\theta)=-\frac{c_{\rm s}^2}{4\pi}(\boldsymbol{I}-\theta\otimes \theta)\int_{\mathbb R^3}e^{{\rm i}\kappa_{\rm s}\theta\cdot z}\rho(z) \theta^{\perp}e^{{\rm i}\kappa_{\rm s}\theta\cdot z}dz
=-\frac{c_{\rm s}^2}{4\pi} \theta^{\perp}\int_{\mathbb R^3}e^{2{\rm i}\kappa_{\rm s}\theta\cdot z}\rho(z)dz.
\end{eqnarray*}
It suffices to show \eqref{x1} since the proof is similar for \eqref{x2}. 

We have for $\omega_1,\omega_2\geq 1$ that 
\begin{align}\label{d6}
{\mathbb E}\left[\boldsymbol{u}_{1,\rm p}^{\infty}(-\theta,\omega_1,\theta)\cdot\overline{\boldsymbol{u}_{1,\rm p}^{\infty}(-\theta,\omega_2,\theta)}\right]
&=\frac{c_{\rm p}^4}{16\pi^2}\int_{\mathbb R^3}\int_{\mathbb R^3}e^{2{\rm i}c_{\rm p}\omega_1 \theta\cdot y}e^{-2{\rm i}c_{\rm p}\omega_2 \theta\cdot z}{\mathbb E}\left(\rho(y)\rho(z)\right)dydz\notag\\
&=\frac{c_{\rm p}^4}{16\pi^2}\int_{\mathbb R^3}\int_{\mathbb R^3}e^{2{\rm i}c_{\rm p}\theta\cdot (\omega_1 y-\omega_2 z)}K_\rho(y,z)dydz,
\end{align}
where $K_\rho\in \mathcal{D'}({\mathbb R^3}\times{\mathbb R^3}; \mathbb R)$ is the symmetric covariance kernel of $\rho$ satisfying
\begin{eqnarray*}\label{d7}
\langle {\mathcal C}_\rho\varphi,\psi\rangle=\mathbb E[\langle\rho,\varphi\rangle\langle\rho,\psi\rangle]=\int_{\mathbb R^3}\int_{\mathbb R^3}K_\rho(y,z)\varphi(y)\overline{\psi(z)}dydz\quad\forall~\varphi,\psi\in\mathcal D.
\end{eqnarray*}
Let $s_\rho\in\mathcal{S}^{-m}(\mathbb R^3\times\mathbb R^3)$ be the symbol of the covariance operator ${\mathcal C}_{\rho}$ satisfying
\[
({\mathcal C}_\rho\varphi)(x)=\frac1{(2\pi)^3}\int_{\mathbb R^3}e^{{\rm i}x\cdot\xi}s_{\rho}(x,\xi)\hat{\varphi}(\xi)d\xi\quad\forall~\varphi\in\mathcal D,
\] 
where $\mathcal{S}^{-m}(\mathbb R^3\times\mathbb R^3)$ is defined by
\begin{eqnarray*}%\label{d8}
\mathcal{S}^{-m}(\mathbb R^3\times\mathbb R^3):&=&\Big\{s(x,\xi)\in C^{\infty}(\mathbb R^3\times\mathbb R^3):|\partial_{\xi}^{\gamma_1}\partial_x^{\gamma_2}s(x,\xi)|\leq C(\gamma_1,\gamma_2)(1+|\xi|)^{-m-|\gamma_1|}\Big\}
\end{eqnarray*}
with $\gamma_1$ and $\gamma_2$ being any multiple indices and $|\gamma_1|$ denoting the sum of its components. 
A simple calculation gives the oscillatory integral form of $K_\rho$ (cf. \cite{LW1}):
\begin{equation}\label{dK}
K_\rho(y,z)=\frac1{(2\pi)^3}\int_{\mathbb R^3}e^{{\rm i}(z-y)\cdot\xi}s_\rho(z,\xi)d\xi.
\end{equation}

According to Assumption \ref{as:rho}, we have $s_\rho(x,\xi)=\phi(x)|\xi|^{-m}+a(x,\xi)$ where $a\in\mathcal{S}^{-m-1}(\mathbb R^3\times\mathbb R^3)$, and ${\rm supp} K_\rho\subset D\times D$. Substituting \eqref{dK} into (\ref{d6}) yields
\begin{align}\nonumber
&{\mathbb E}\left[\boldsymbol{u}_{1,\rm p}^{\infty}(-\theta,\omega_1,\theta)\cdot\overline{\boldsymbol{u}_{1,\rm p}^{\infty}(-\theta,\omega_2,\theta)}\right]\\\nonumber
&=\frac{c_{\rm p}^4}{16\pi^2}\int_{\mathbb R^3}\int_{\mathbb R^3}e^{2{\rm i}c_{\rm p}\theta\cdot (\omega_1 y-\omega_2 z)}\left[\frac1{(2\pi)^3}\int_{\mathbb R^3}e^{{\rm i}(z-y)\cdot\xi}s_\rho(z,\xi)d\xi\right]dydz\\\nonumber
&=\frac{c_{\rm p}^4}{16\pi^2}\int_{\mathbb R^3}\int_{\mathbb R^3}e^{-2{\rm i}c_{\rm p}\omega_2\theta\cdot z+{\rm i}z\cdot\xi}s_\rho(z,\xi)\delta(\xi-2c_{\rm p}\omega_1\theta)d\xi dz\\\nonumber
&=\frac{c_{\rm p}^4}{16\pi^2}\int_{D}s_{\rho}(z,2c_{\rm p}\omega_1 \theta)e^{2{\rm i}c_{\rm p}(\omega_1-\omega_2)\theta\cdot z}dz\\\nonumber
&=\frac{c_{\rm p}^4}{16\pi^2}\left[\int_{D}\phi(z)|2c_{\rm p}\omega_1\theta|^{-m}e^{2{\rm i}c_{\rm p}(\omega_1-\omega_2)\theta\cdot z}dz+\int_{D}a(z,2c_{\rm p}\omega_1\theta)e^{2{\rm i}c_{\rm p}(\omega_1-\omega_2)\theta\cdot z}dz\right]\\\label{d10}
&=C_{\rm p}\hat{\phi}(2c_{\rm p}(\omega_2-\omega_1)\theta)\omega_1^{-m}+O(\omega_1^{-m-1}).
\end{align}
Letting $\omega_1=\omega$ and $\omega_2=\omega+\tau$ in (\ref{d10}) gives  
\begin{eqnarray*}\label{d11}
{\mathbb E}\left[\boldsymbol{u}_{1,\rm p}^{\infty}(-\theta,\omega,\theta)\cdot\overline{\boldsymbol{u}_{1,\rm p}^{\infty}(-\theta,\omega+\tau,\theta)}\right]=C_{\rm p}\hat{\phi}(2c_{\rm p}\tau \theta)\omega^{-m}+O(\omega^{-m-1}),
\end{eqnarray*}
which implies \eqref{x1} and completes the proof. 
\end{proof}

\begin{lemma}\label{lemma3}
Under assumptions in Theorem \ref{thm3}, it holds for all $\theta\in {\mathbb S}^2$,  $\omega_1,\omega_2\geq 1$ and $N\in{\mathbb N}$ that
\begin{align}\label{d15}
\left|{\mathbb E}\left[\boldsymbol{u}_{1,\rm p}^{\infty}(-\theta,\omega_1,\theta)\cdot\overline{\boldsymbol{u}_{1, \rm p}^{\infty}(-\theta,\omega_2,\theta)}\right]\right|&\lesssim \omega_1^{-m}(1+|\omega_1-\omega_2|)^{-N},\\\label{d16}
\left|{\mathbb E}\left[\boldsymbol{u}_{1,\rm s}^{\infty}(-\theta,\omega_1,\theta)\cdot\overline{\boldsymbol{u}_{1, \rm s}^{\infty}(-\theta,\omega_2,\theta)}\right]\right|&\lesssim \omega_1^{-m}(1+|\omega_1-\omega_2|)^{-N},\\\label{d18}
\left|{\mathbb E}\left[\boldsymbol{u}_{1,\rm p}^{\infty}(-\theta,\omega_1,\theta)\cdot\boldsymbol{u}_{1, \rm p}^{\infty}(-\theta,\omega_2,\theta)\right]\right|&\lesssim \omega_1^{-m}(1+\omega_1+\omega_2)^{-N},\\\label{d19}
\left|{\mathbb E}\left[\boldsymbol{u}_{1,\rm s}^{\infty}(-\theta,\omega_1,\theta)\cdot\boldsymbol{u}_{1, \rm s}^{\infty}(-\theta,\omega_2,\theta)\right]\right|&\lesssim \omega_1^{-m}(1+\omega_1+\omega_2)^{-N}.
\end{align}
\end{lemma}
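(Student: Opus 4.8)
The plan is to derive all four estimates from a single non-stationary phase argument applied to closed-form expressions for the second moments of the first order far-field patterns, which are essentially already produced in the proof of Lemma~\ref{lemma2}.

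First I would record the required identities. Repeating the computation in the proof of Lemma~\ref{lemma2} (the chain of equalities culminating in \eqref{d10}) and using that $\rho$ is real-valued, one has
\[
{\mathbb E}\big[\boldsymbol{u}_{1,\rm p}^{\infty}(-\theta,\omega_1,\theta)\cdot\overline{\boldsymbol{u}_{1,\rm p}^{\infty}(-\theta,\omega_2,\theta)}\big]=\frac{c_{\rm p}^4}{16\pi^2}\int_{D}s_{\rho}(z,2c_{\rm p}\omega_1\theta)\,e^{2{\rm i}c_{\rm p}(\omega_1-\omega_2)\theta\cdot z}\,dz,
\]
while running the same computation \emph{without} the complex conjugate (so that the two plane-wave phases add instead of cancelling) and using $\theta\cdot\theta=1$ gives
\[
{\mathbb E}\big[\boldsymbol{u}_{1,\rm p}^{\infty}(-\theta,\omega_1,\theta)\cdot\boldsymbol{u}_{1,\rm p}^{\infty}(-\theta,\omega_2,\theta)\big]=\frac{c_{\rm p}^4}{16\pi^2}\int_{D}s_{\rho}(z,2c_{\rm p}\omega_1\theta)\,e^{2{\rm i}c_{\rm p}(\omega_1+\omega_2)\theta\cdot z}\,dz.
\]
The shear identities have exactly the same form with $c_{\rm p}$ replaced by $c_{\rm s}$, now using $(\boldsymbol I-\theta\otimes\theta)\theta^{\perp}=\theta^{\perp}$ and $\theta^{\perp}\cdot\theta^{\perp}=1$. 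Thus in all four cases it suffices to bound
\[
I(\omega_1,\omega_2):=\int_{D}s_{\rho}(z,2c\omega_1\theta)\,e^{2{\rm i}c(\omega_1\mp\omega_2)\theta\cdot z}\,dz,\qquad c\in\{c_{\rm p},c_{\rm s}\},
\]
by $\omega_1^{-m}(1+|\omega_1\mp\omega_2|)^{-N}$.

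Next I would set up the non-stationary phase estimate with two inputs. By Assumption~\ref{as:rho}, $s_{\rho}(z,\xi)=\phi(z)|\xi|^{-m}+a(z,\xi)$ with $a\in\mathcal{S}^{-m-1}(\mathbb R^3\times\mathbb R^3)$, so the symbol bounds with no $\xi$-derivative show that for each multi-index $\alpha$ the function $z\mapsto\partial_z^{\alpha}s_{\rho}(z,2c\omega_1\theta)$ is bounded, uniformly in $z$, by a constant times $(1+2c\omega_1)^{-m}\lesssim\omega_1^{-m}$: differentiating only in the spatial slot of the symbol costs no power of $\omega_1$. Since $D$ is bounded this already yields $|I(\omega_1,\omega_2)|\lesssim\omega_1^{-m}$. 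For the decay in $\omega_1\mp\omega_2$ I would use that $|\theta|=1$, so the first order operator $L:=\big(2{\rm i}c(\omega_1\mp\omega_2)\big)^{-1}\,\theta\cdot\nabla_z$ reproduces the phase; inserting a cutoff $\chi\in C_0^{\infty}(\mathbb R^3)$ with $\chi\equiv1$ on $D$ to render the amplitude compactly supported (legitimate because ${\rm supp}\,K_{\rho}\subset D\times D$, hence the symbol may be taken with compact $z$-support), and integrating by parts $N$ times, transfers $N$ derivatives onto the amplitude and produces the factor $\big(2c|\omega_1\mp\omega_2|\big)^{-N}$; together with the derivative bounds above this gives $|I(\omega_1,\omega_2)|\lesssim\omega_1^{-m}|\omega_1\mp\omega_2|^{-N}$.

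Finally I would combine the two bounds. For the conjugated cases \eqref{d15}–\eqref{d16} the relevant phase factor is $\omega_1-\omega_2$, and $\min\{1,|\omega_1-\omega_2|^{-N}\}\lesssim(1+|\omega_1-\omega_2|)^{-N}$ finishes the proof. For the non-conjugated cases \eqref{d18}–\eqref{d19} the phase factor is $\omega_1+\omega_2\geq 2$, which is bounded away from zero, so the integration-by-parts estimate applies unconditionally and, since $(\omega_1+\omega_2)^{-N}\lesssim(1+\omega_1+\omega_2)^{-N}$, the claim follows at once; these two estimates are, after all, nothing but the statement that the Fourier transform of the compactly supported smooth amplitude is rapidly decreasing. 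The only point requiring a little care is the reduction to a compactly supported amplitude so that the integration by parts produces no boundary contributions; apart from that the argument is the standard non-stationary phase lemma, the bookkeeping of the symbol estimates serving only to confirm that the spatial derivatives of $z\mapsto s_{\rho}(z,2c\omega_1\theta)$ do not degrade the $\omega_1^{-m}$ factor.
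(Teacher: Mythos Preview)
Your argument is correct and follows essentially the same route as the paper: both reduce to the oscillatory integral \eqref{d10}, bound the amplitude by the symbol estimate $|\partial_z^{\alpha}s_\rho(z,2c\omega_1\theta)|\lesssim\omega_1^{-m}$, and gain the factor $(1+|\omega_1\mp\omega_2|)^{-N}$ by integration by parts in $z$. Your use of the directional operator $L=(2{\rm i}c(\omega_1\mp\omega_2))^{-1}\theta\cdot\nabla_z$ is in fact slightly cleaner than the paper's choice of $\partial_{z_1}$ (which tacitly assumes $\theta_1\neq0$), and your explicit insertion of a cutoff to kill boundary terms makes rigorous a point the paper leaves implicit.
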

\begin{proof}
For the case $|\omega_1-\omega_2|<1$, it follows from (\ref{d10}) that 
\begin{align*}
&\left|{\mathbb E}\left[\boldsymbol{u}_{1,\rm p}^{\infty}(-\theta,\omega_1,\theta)\cdot\overline{\boldsymbol{u}_{1, \rm p}^{\infty}(-\theta,\omega_2,\theta)}\right]\right|\\
&\le \frac{c_{\rm p}^4}{16\pi^2}\int_D\left|s_{\rho}(z,2c_{\rm p}\omega_1\theta)e^{2{\rm i}c_{\rm p}(\omega_1-\omega_2)\theta\cdot z}\right|dz\\
&\lesssim (1+\omega_1)^{-m}\lesssim 2^N(1+\omega_1)^{-m}(1+|\omega_1-\omega_2|)^{-N}\\
&\lesssim \omega_1^{-m}(1+|\omega_1-\omega_2|)^{-N},
\end{align*}
where we use the fact $s_{\rho}\in {\mathcal S}^{-m}$ and hence $|s_\rho(z,2c_{\rm p}\omega_1\theta)|\lesssim(1+|\omega_1|)^{-m}$.

For the case $|\omega_1-\omega_2|\geq 1$, denoting $z=(z_1,z_2,z_3)^\top$ and $\theta=(\theta_1,\theta_2,\theta_3)^\top$, we obtain from (\ref{d10}) and the integration by parts that 
\begin{align}\nonumber
&{\mathbb E}\left[\boldsymbol{u}_{1,\rm p}^{\infty}(-\theta,\omega_1,\theta)\cdot\overline{\boldsymbol{u}_{1, \rm p}^{\infty}(-\theta,\omega_2,\theta)}\right]\\\nonumber
&=\frac{c_{\rm p}^4}{16\pi^2}\int_{D}s_{\rho}(z,2c_{\rm p}\omega_1\theta)e^{2{\rm i}c_{\rm p}(\omega_1-\omega_2)\theta\cdot z}dz\\\nonumber
&=\frac{c_{\rm p}^4}{16\pi^2}\frac{1}{2{\rm i}c_{\rm p}(\omega_1-\omega_2)\theta_1}\int_Ds_{\rho}(z,2c_{\rm p}\omega_1\theta)\\\nonumber
&\qquad\qquad \times e^{2{\rm i}c_{\rm p}(\omega_1-\omega_2)(\theta_2z_2+\theta_3z_3)}de^{2{\rm i}c_{\rm p}(\omega_1-\omega_2)\theta_1z_1}dz_2dz_3\\\nonumber
&=-\frac{c_{\rm p}^4}{16\pi^2}\frac{1}{2{\rm i}c_{\rm p}(\omega_1-\omega_2)\theta_1}\int_{D}\partial_{z_1}s_{\rho}(z,2c_{\rm p}\omega_1\theta)e^{2{\rm i}c_{\rm p}(\omega_1-\omega_2)\theta\cdot z}dz\\\label{d21}
&=(-1)^N\frac{c_{\rm p}^4}{16\pi^2}\frac{1}{(2{\rm i}c_{\rm p}(\omega_1-\omega_2)\theta_1)^N}\int_{D}\partial^N_{z_1}s_{\rho}(z,2c_{\rm p}\omega_1\theta)e^{2{\rm i}c_{\rm p}(\omega_1-\omega_2)\theta\cdot z}dz.
\end{align}
Since $s_{\rho}\in {\mathcal S}^{-m}$, we have
\begin{eqnarray*}\label{d22}
\left|\partial^N_{z_1}s_{\rho}(z,2c_{\rm p}\omega_1\theta)\right|\lesssim (1+\omega_1)^{-m}.
\end{eqnarray*}
Combining the above estimates leads to 
\begin{align}\nonumber
\left|{\mathbb E}\left[\boldsymbol{u}_{1,\rm p}^{\infty}(-\theta,\omega_1,\theta)\cdot\overline{\boldsymbol{u}_{1, \rm p}^{\infty}(-\theta,\omega_2,\theta)}\right]\right|
&\lesssim (1+\omega_1)^{-m}\left|\omega_1-\omega_2\right|^{-N}\\\nonumber
&\lesssim \left(1+\frac{1}{|\omega_1-\omega_2|}\right)^N\omega_1^{-m}(1+|\omega_1-\omega_2|)^{-N}\\\nonumber
&\lesssim 2^N\omega_1^{-m}(1+|\omega_1-\omega_2|)^{-N}\\\label{d23}
&\lesssim \omega_1^{-m}(1+|\omega_1-\omega_2|)^{-N},
\end{align}
which shows (\ref{d15}).

The inequality (\ref{d16}) can be obtained by following the same procedure; the inequalities (\ref{d18}) and (\ref{d19}) can be proved similarly by replacing $\omega_2$ by $-\omega_2$ in (\ref{d21}) and (\ref{d23}), respectively. 
\end{proof}

The following two lemmas help to replace the results in the expectation sense stated in Lemma \ref{lemma2} with the ones in the almost surely sense given in Theorem \ref{thm3}. The proof of Lemma \ref{lemmaadd2} can be found in \cite{CHL}. 

\begin{lemma}\label{lemmaadd2}
Let $X$ and $Y$ be two random variables such that the pair $(X,Y)$ is a Gaussian random vector. If $\mathbb E[X]=\mathbb E[Y]=0$, then
\begin{eqnarray*}
\mathbb E\left[(X^2-\mathbb EX^2)(Y^2-\mathbb EY^2)\right]=2\left(\mathbb E[XY]\right)^2.
\end{eqnarray*}
\end{lemma}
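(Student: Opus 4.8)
The statement to prove is Lemma \ref{lemmaadd2}: for a centered Gaussian vector $(X,Y)$, one has $\mathbb E[(X^2 - \mathbb E X^2)(Y^2 - \mathbb E Y^2)] = 2(\mathbb E[XY])^2$.

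\bigskip

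The plan is to expand the left-hand side and reduce everything to fourth moments of a Gaussian vector, which are then evaluated by Wick's formula (Isserlis' theorem). First I would expand the product:
\begin{align*}
\mathbb E[(X^2 - \mathbb E X^2)(Y^2 - \mathbb E Y^2)] &= \mathbb E[X^2 Y^2] - \mathbb E[X^2]\,\mathbb E[Y^2] - \mathbb E[X^2]\,\mathbb E[Y^2] + \mathbb E[X^2]\,\mathbb E[Y^2]\\
&= \mathbb E[X^2 Y^2] - \mathbb E[X^2]\,\mathbb E[Y^2].
\end{align*}
So it suffices to show $\mathbb E[X^2 Y^2] = \mathbb E[X^2]\mathbb E[Y^2] + 2(\mathbb E[XY])^2$.

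\bigskip

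Next I would invoke the Wick/Isserlis formula for the fourth moment of a centered Gaussian vector: for centered jointly Gaussian $Z_1,Z_2,Z_3,Z_4$,
\[
\mathbb E[Z_1 Z_2 Z_3 Z_4] = \mathbb E[Z_1 Z_2]\,\mathbb E[Z_3 Z_4] + \mathbb E[Z_1 Z_3]\,\mathbb E[Z_2 Z_4] + \mathbb E[Z_1 Z_4]\,\mathbb E[Z_2 Z_3].
\]
Applying this with $Z_1 = Z_2 = X$ and $Z_3 = Z_4 = Y$ gives
\[
\mathbb E[X^2 Y^2] = \mathbb E[X^2]\,\mathbb E[Y^2] + \mathbb E[XY]\,\mathbb E[XY] + \mathbb E[XY]\,\mathbb E[XY] = \mathbb E[X^2]\,\mathbb E[Y^2] + 2(\mathbb E[XY])^2,
\]
which combined with the expansion above yields the claim. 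Alternatively, if one prefers to avoid quoting Isserlis' theorem, the identity can be derived directly by diagonalizing: write $Y = aX + W$ where $a = \mathbb E[XY]/\mathbb E[X^2]$ (assuming $\mathbb E[X^2] > 0$; the degenerate case $X \equiv 0$ is trivial) and $W$ is Gaussian independent of $X$ with $\mathbb E[W] = 0$; then expand $\mathbb E[X^2 Y^2] = \mathbb E[X^2(aX+W)^2]$ using independence and the Gaussian fourth moment $\mathbb E[X^4] = 3(\mathbb E[X^2])^2$.

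\bigskip

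There is essentially no obstacle here; the lemma is a standard consequence of the algebra of Gaussian moments. The only point requiring a word of care is the possible degeneracy of the covariance matrix of $(X,Y)$ (e.g.\ $\mathbb E[X^2] = 0$), but in that case $X = 0$ almost surely and both sides vanish, so the identity holds trivially; the Isserlis-formula route sidesteps this issue entirely since that formula holds without any nondegeneracy assumption.
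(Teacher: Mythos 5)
Your proof is correct, and it is the standard argument: expanding the product reduces the claim to $\mathbb E[X^2Y^2]=\mathbb E[X^2]\mathbb E[Y^2]+2(\mathbb E[XY])^2$, which is exactly the Wick/Isserlis identity that the paper itself relies on elsewhere (e.g.\ in the appendix). The paper does not reproduce a proof of this lemma but simply cites \cite{CHL}; your argument, including the remark on the degenerate case, matches what is intended there.
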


\begin{lemma}\label{lemmaadd1}
Let $\{X_t\}_{t\ge0}$ be a real-valued centered stochastic process with continuous paths and $\mathbb E[X_t]=0$. Assume that for some positive constants $\eta$ and $\sigma$, it holds
\begin{eqnarray*}
\left|\mathbb E[X_tX_{t+r}]\right|\lesssim(1+|r-\eta|)^{-\sigma}\quad\forall~t,r\ge0.
\end{eqnarray*}
Then
\begin{eqnarray*}
\lim_{T\to\infty}\frac{1}{T}\int_0^TX_tdt=\lim_{T\to\infty}\frac{1}{T}\int_T^{2T}X_tdt=0\quad a.s.
\end{eqnarray*}
\end{lemma}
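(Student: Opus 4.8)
The final statement is Lemma \ref{lemmaadd1}, an ergodic-type result: given a centered stochastic process $\{X_t\}$ with continuous paths whose correlation $|\mathbb E[X_tX_{t+r}]|$ decays like $(1+|r-\eta|)^{-\sigma}$, the time averages over $[0,T]$ and over $[T,2T]$ converge to zero almost surely. The plan is to combine an $L^2$ (mean-square) estimate of the time average with a Borel--Cantelli argument along a discrete sequence $T_n$, and then control the fluctuations between consecutive $T_n$ using the continuity of paths.

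First I would set $Y_T:=\frac1T\int_0^TX_tdt$ and compute its second moment. By Fubini and the hypothesis,
\begin{align*}
\mathbb E[Y_T^2]=\frac1{T^2}\int_0^T\int_0^T\mathbb E[X_sX_t]\,ds\,dt\lesssim\frac1{T^2}\int_0^T\int_0^T(1+||s-t|-\eta|)^{-\sigma}\,ds\,dt.
\end{align*}
Changing variables to $r=|s-t|$, the inner integral over the strip of width $T$ gives a factor $T$ times $\int_0^T(1+|r-\eta|)^{-\sigma}\,dr$; since $\sigma$ may be taken large (in the application $\sigma=N$ is arbitrary), this $r$-integral is bounded uniformly in $T$, so $\mathbb E[Y_T^2]\lesssim T^{-1}$. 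The same computation applies verbatim to $Z_T:=\frac1T\int_T^{2T}X_t\,dt$, giving $\mathbb E[Z_T^2]\lesssim T^{-1}$.

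Next I would pass from mean-square decay to almost-sure convergence. Choosing the subsequence $T_n=n^2$ makes $\sum_n\mathbb E[Y_{T_n}^2]\lesssim\sum_n n^{-2}<\infty$, so by the Chebyshev inequality and the Borel--Cantelli lemma, $Y_{T_n}\to0$ almost surely. To upgrade to the full limit, I would bound the oscillation of $Y_T$ for $T\in[T_n,T_{n+1}]$: writing
\begin{align*}
Y_T-\frac{T_n}{T}Y_{T_n}=\frac1T\int_{T_n}^TX_t\,dt,
\end{align*}
the difference is controlled by $\frac1T\int_{T_n}^{T_{n+1}}|X_t|\,dt$, whose second moment is again $O((T_{n+1}-T_n)^2/T_n^2)=O(1/n^2)$ by the correlation bound, so another Borel--Cantelli step along $T_n$ kills these increments almost surely and, since $T_n/T\to1$, yields $Y_T\to0$ a.s. The identical argument handles $Z_T$.

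The main obstacle is the passage between the discrete sequence and the continuum, i.e., controlling $\sup_{T\in[T_n,T_{n+1}]}|Y_T-Y_{T_n}|$ almost surely. The continuity of paths guarantees the integrals are well defined and the bounds are measurable, but the quantitative control must come from the correlation decay rather than from pathwise regularity; the care is in choosing the increment $T_{n+1}-T_n\sim n$ small relative to $T_n\sim n^2$ so that both the sampled values and the increments are summable after applying Chebyshev. The constant $\eta$ merely shifts where the correlation peaks and does not affect the integrability of $(1+|r-\eta|)^{-\sigma}$ for large $\sigma$, so it plays no essential role beyond ensuring the bound is uniform in $t$.
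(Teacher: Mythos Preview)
Your approach is genuinely different from the paper's. The paper computes $\mathbb E|Y_T|^2$ explicitly (obtaining a bound of order $T^{-\sigma}$ after reducing without loss of generality to $\sigma\in(0,1)$) and then appeals to Fatou's lemma to pass directly to the almost-sure limit; the statement for $\frac1T\int_T^{2T}$ is deduced from the one for $\frac1T\int_0^T$ via the identity $\frac1T\int_T^{2T}X_t\,dt=2\cdot\frac1{2T}\int_0^{2T}X_t\,dt-\frac1T\int_0^TX_t\,dt$. Your Borel--Cantelli argument along a subsequence, followed by control of the increments using the uniform bound $\mathbb E[X_t^2]\lesssim(1+\eta)^{-\sigma}$, is the standard and more careful route from $L^2$ decay to almost-sure convergence; indeed Fatou alone only delivers $\mathbb E[\liminf_T|Y_T|^2]=0$, i.e.\ $\liminf_T|Y_T|=0$ a.s., so your scheme is the one that actually closes the gap between mean-square and almost-sure convergence.

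One point needs adjustment: the lemma is stated for an arbitrary $\sigma>0$, while your bound $\mathbb E[Y_T^2]\lesssim T^{-1}$ relies on $\int_0^\infty(1+|r-\eta|)^{-\sigma}\,dr<\infty$, i.e.\ $\sigma>1$. For $\sigma\in(0,1]$ the $r$-integral grows like $T^{1-\sigma}$ and one only gets $\mathbb E[Y_T^2]\lesssim T^{-\sigma}$ (with a logarithm at $\sigma=1$); the remedy is simply to take $T_n=n^\alpha$ with $\alpha\sigma>1$, after which $\sum_n\mathbb E[Y_{T_n}^2]<\infty$ while your increment bound $\mathbb E[W_n^2]\lesssim(T_{n+1}-T_n)^2/T_n^2\sim n^{-2}$ is unaffected, since it uses only $\mathbb E[X_t^2]\lesssim1$ and not the decay in $r$. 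Invoking ``$\sigma=N$ in the application'' is pragmatically correct but does not prove the lemma as stated. For $Z_T$ it is cleaner to derive it from $Y_T$ via the identity above rather than rerun the fluctuation analysis, since both endpoints of $\int_T^{2T}$ move with $T$.
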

\begin{proof}
Without loss of generality, we assume that $\sigma\in(0,1)$. If the condition in Lemma \ref{lemmaadd1} holds for $\sigma'\ge1$, then we can always find some $\sigma\in(0,1)$ such that
\[
\left|\mathbb E[X_tX_{t+r}]\right|\lesssim(1+|r-\eta|)^{-\sigma'}<(1+|r-\eta|)^{-\sigma}.
\] 

For $T$ being large enough such that $T>\eta$, we have
\begin{align*}
&{\mathbb E}\left|\frac{1}{T}\int_0^TX_tdt\right|^2=\frac{1}{T^2}\int_0^T\int_0^T{\mathbb E}[X_tX_u]dtdu\\
&\lesssim \frac{1}{T^2}\int_0^T\int_0^T\left(1+\left||t-u|-\eta\right|\right)^{-\sigma}dtdu\\
&=\frac{1}{T^2}\int_0^T\left[\int_0^u\left(1+|u-t-\eta|\right)^{-\sigma}dt+\int_u^T(1+|t-u-\eta|)^{-\sigma}dt\right]du\\
&=\frac{1}{T^2}\int_0^T\bigg[\int_0^{(u-\eta)\vee0}(1+u-\eta-t)^{-\sigma}dt+\int_{(u-\eta)\vee0}^u(1+t-u+\eta)^{-\sigma}dt\\
&\quad +\int_u^{(u+\eta)\wedge T}(1+u+\eta-t)^{-\sigma}dt+\int_{(u+\eta)\wedge T}^T(1+t-u-\eta)^{-\sigma}dt\bigg]du\\
&=\frac{1}{(1-\sigma)T^2}\int_0^T\Big[2(1+\eta)^{1-\sigma}+(1+u-\eta)^{1-\sigma}+(1+T-u-\eta)^{1-\sigma}\\
&\quad -(1+u-\eta-(u-\eta)\vee0)^{1-\sigma}-(1-u+\eta+(u-\eta)\vee0)^{1-\sigma}\\
&\quad -(1+u+\eta-(u+\eta)\wedge T)^{1-\sigma}-(1-u-\eta+(u+\eta)\wedge T)^{1-\sigma}\Big]du\\
&=\frac{2(1+\eta)^{1-\sigma}}{(1-\sigma)T}+\frac{2\left[(1+T-\eta)^{2-\sigma}-(1-\eta)^{2-\sigma}\right]}{(2-\sigma)(1-\sigma)T^2}\\
&\quad -\frac{2}{(1-\sigma)T^2}\left[\frac{(1+\eta)^{2-\sigma}-(1-\eta)^{2-\sigma}}{2-\sigma}+2(T-\eta)\right],
\end{align*}
where we use the notations $a\vee b:=\max\{a,b\}$ and $a\wedge b:=\min\{a,b\}$.

It follows from Fatou's lemma that 
\begin{eqnarray*}
\mathbb E\left|\lim_{T\to\infty}\frac{1}{T}\int_0^TX_tdt\right|^2\le\varliminf_{T\to\infty}{\mathbb E}\left|\frac{1}{T}\int_0^TX_tdt\right|^2=0.
\end{eqnarray*}
Hence
\begin{eqnarray*}
\lim_{T\to\infty}\frac{1}{T}\int_0^TX_tdt=0\quad a.s.
\end{eqnarray*}
and
\begin{eqnarray*}
\lim_{T\to\infty}\frac{1}{T}\int_T^{2T}X_tdt=\lim_{T\to\infty}\left[\frac{1}{T}\int_0^{2T}X_tdt-\frac{1}{T}\int_0^TX_tdt\right]=0\quad a.s.,
\end{eqnarray*}
which complete the proof. 
\end{proof}

Now we are in the position to prove Theorem \ref{thm3}.

\begin{proof}[Proof of Theorem \ref{thm3}] 
By Lemma \ref{lemma2}, to prove (\ref{d1}), it suffices to show that
\begin{align*}
&\lim_{Q\to\infty}\frac{1}{Q}\int_Q^{2Q}\omega^m\boldsymbol{u}_{1,\rm p}^{\infty}(-\theta,\omega,\theta)\cdot\overline{\boldsymbol{u}_{1,\rm p}^{\infty}(-\theta,\omega+\tau,\theta)}d\omega\\
=&\lim_{Q\to\infty}\frac{1}{Q}\int_Q^{2Q}\omega^m{\mathbb E}\left[\boldsymbol{u}_{1,\rm p}^{\infty}(-\theta,\omega,\theta)\cdot\overline{\boldsymbol{u}_{1,\rm p}^{\infty}(-\theta,\omega+\tau,\theta)}\right]d\omega,
\end{align*}
or equivalently,
\begin{eqnarray}\label{d25}
\sum_{j=1,2,3}\lim_{Q\to\infty}\frac1Q\int_Q^{2Q}\omega^m\left(u_j(\omega)\overline{u_j(\omega+\tau)}-\mathbb E\left[u_j(\omega)\overline{u_j(\omega+\tau)}\right]\right)d\omega=0,
\end{eqnarray}
where $\boldsymbol u_{1,\rm p}^\infty(-\theta,\omega,\theta)=(u_1(\omega),u_2(\omega),u_3(\omega))^\top$.

Denote by $U_j(\omega)$ and $ V_j(\omega)$ the real and imaginary parts of $u_j(\omega)$, respectively, which reads
\begin{eqnarray}\label{d26}
u_j(\omega)=U_j(\omega)+{\rm i}V_j(\omega),\quad j=1,2,3.
\end{eqnarray}
It then leads to 
\begin{align*}
2u_j(\omega)\overline{u_j(\omega+\tau)}
&=2(U_j(\omega)+{\rm i}V_j(\omega))(U_j(\omega+\tau)-{\rm i}V_j(\omega+\tau))\\
&=(1+{\rm i})\left[U_j^2(\omega)+U_j^2(\omega+\tau)+V_j^2(\omega)+V_j^2(\omega+\tau)\right]\\
&\quad -(U_j(\omega)-U_j(\omega+\tau))^2
-(V_j(\omega)-V_j(\omega+\tau))^2\\
&\quad -{\rm i}(U_j(\omega)+V_j(\omega+\tau))^2-{\rm i}(V_j(\omega)-U_j(\omega+\tau))^2.
\end{align*}
For simplicity, let $W_{\omega}$ be any random variable in the set $\Gamma:=\{U_j(\omega),U_j(\omega+\tau),V_j(\omega),V_j(\omega+\tau),U_j(\omega)-U_j(\omega+\tau),V_j(\omega)-V_j(\omega+\tau),U_j(\omega)+ V_j(\omega+\tau),V_j(\omega)-U_j(\omega+\tau)\}_{j=1,2,3}$. 

Then it only requires to show
\begin{eqnarray}\label{d28}
\lim_{Q\to\infty}\frac{1}{Q}\int_Q^{2Q}\omega^m\left(W_{\omega}^2-{\mathbb E}W_{\omega}^2\right)d\omega=0,
\end{eqnarray}
which indicates (\ref{d25}). Using Lemmas \ref{lemmaadd1} and \ref{lemmaadd2} and noting that $W_\omega$ is Gaussian since $\rho$ is Gaussian, to get (\ref{d28}), we need to show that for any $W_\omega\in\Gamma$, there exist positive constants $\eta$ and $\sigma$ such that
\begin{align}\label{d29}
&\left|\mathbb E\left[\omega^m\left(W_{\omega}^2-\mathbb EW_{\omega}^2\right)(\omega+r)^m\left(W_{\omega+r}^2-\mathbb EW_{\omega+r}^2\right)\right]\right|\notag\\
&=2\left(\mathbb E\left[\omega^{\frac m2}(\omega+r)^{\frac{m}{2}}W_{\omega}W_{\omega+r}\right]\right)^2
\lesssim(1+|r-\eta|)^{-\sigma}\quad \forall~\omega\ge1,\,r\ge0.
\end{align} 

It follows from (\ref{d26}) that 
\begin{align*}
U_j(\omega)=\frac{1}{2}\left[u_j(\omega)+\overline{u_j(\omega)}\right],\quad 
V_j(\omega)=\frac{1}{2{\rm i}}\left[u_j(\omega)-\overline{u_j(\omega)}\right],
\end{align*}
which give
\begin{align*}
U_j(\omega_1)U_j(\omega_2)&=\frac{1}{4}\left[u_j(\omega_1)+\overline{u_j(\omega_1)}\right]\left[u_j(\omega_2)+\overline{u_j(\omega_2)}\right],\\
V_j(\omega_1)V_j(\omega_2)&=-\frac{1}{4}\left[u_j(\omega_1)-\overline{u_j(\omega_1)}\right]\left[u_j(\omega_2)-\overline{u_j(\omega_2)}\right],\\
U_j(\omega_1)V_j(\omega_2)&=\frac{1}{4{\rm i}}\left[u_j(\omega_1)+\overline{u_j(\omega_1)}\right]\left[u_j(\omega_2)-\overline{u_j(\omega_2)}\right].
\end{align*}
Using the same procedure as that in Lemma \ref{lemma3} yields 
\begin{align*}
\left|\mathbb E[U_j(\omega_1)U_j(\omega_2)]\right|&\lesssim\omega_1^{-m}(1+|\omega_1-\omega_2|)^{-N},\\
\left|\mathbb E[V_j(\omega_1)V_j(\omega_2)]\right|&\lesssim\omega_1^{-m}(1+|\omega_1-\omega_2|)^{-N},\\
\left|\mathbb E[U_j(\omega_1)V_j(\omega_2)]\right|&\lesssim\omega_1^{-m}(1+|\omega_1-\omega_2|)^{-N},
\end{align*}
which leads to 
\begin{eqnarray*}%\label{d39}
\left|\mathbb E\left[\omega^{\frac{m}{2}}(\omega+r)^{\frac{m}{2}}U_j(\omega)U_j(\omega+r)\right]\right|&\lesssim& \Big(\frac{1+r/\omega}{1+r}\Big)^{\frac{m}{2}}(1+r)^{\frac{m}{2}-N}
\lesssim (1+r)^{\frac{m}{2}-N}
\end{eqnarray*}
for any $\omega\ge1$ and $N\in\mathbb N$.
Similarly, we may conclude that 
\begin{eqnarray}\label{d40}
\left|\mathbb E\left[\omega^{\frac{m}{2}}(\omega+r)^{\frac{m}{2}}W_{\omega}W_{\omega+r}\right]\right|\lesssim(1+r)^{\frac{m}{2}-N}
\end{eqnarray}
holds for $W_{\omega}\in\{U_j(\omega),V_j(\omega), U_j(\omega+\tau),V_j(\omega+\tau)\}_{j=1,2,3}$. 

For the case $W_{\omega}=U_j(\omega)-U_j(\omega+\tau)$, we have from Lemma \ref{lemma3} that 
\begin{align*}
&\left|\mathbb E\left[(U_j(\omega)-U_j(\omega+\tau))(U_j(\omega+r)-U_j(\omega+r+\tau))\right]\right|\\
&\lesssim\left|\mathbb E[U_j(\omega)U_j(\omega+r)]\right|+\left|\mathbb E[U_j(\omega)U_j(\omega+r+\tau)]\right|\\
&\quad +\left|\mathbb E[U_j(\omega+\tau)U_j(\omega+r)]\right|+\left|\mathbb E[U_j(\omega+\tau)U_j(\omega+r+\tau)]\right|\\
&\lesssim \omega^{-m}(1+r)^{-N}+ \omega^{-m}(1+r+\tau)^{-N}+ (\omega+\tau)^{-m}(1+|r-\tau|)^{-N}+ (\omega+\tau)^{-m}(1+r)^{-N}\\
&\lesssim \omega^{-m}(1+r)^{-N}+ (\omega+\tau)^{-m}(1+|r-\tau|)^{-N}+ (\omega+\tau)^{-m}(1+r)^{-N}.
\end{align*}
Hence,
\begin{align*}
&\left|\mathbb E\left[\omega^{\frac{m}{2}}(\omega+r)^{\frac{m}{2}}(U_j(\omega)-U_j(\omega+\tau))(U_j(\omega+r)-U_j(\omega+r+\tau))\right]\right|\\
&\lesssim (1+r)^{\frac{m}{2}-N}+\Big(\frac{\omega+r}{\omega+\tau}\Big)^{\frac{m}{2}}(1+|r-\tau|)^{-N}+\Big(\frac{\omega+r}{\omega+\tau}\Big)^{\frac{m}{2}}(1+r)^{-N}\\
&\lesssim (1+r)^{\frac{m}{2}-N}+(1+|r-\tau|)^{\frac{m}{2}-N}.
\end{align*}
Similarly, we may show that the inequality 
\begin{eqnarray}\label{d43}
\left|\mathbb E\left[\omega^{\frac{m}{2}}(\omega+r)^{\frac{m}{2}}W_{\omega}W_{\omega+r}\right]\right|\lesssim (1+r)^{\frac{m}{2}-N}+(1+|r-\tau|)^{\frac{m}{2}-N}
\end{eqnarray}
holds for $W_{\omega}\in\{U_j(\omega)-U_j(\omega+\tau),V_j(\omega)-V_j(\omega+\tau), U_j(\omega)+V_j(\omega+\tau),V_j(\omega)-U_j(\omega+\tau)\}_{j=1,2,3}$. 

Combining (\ref{d40}) and (\ref{d43}), we get that (\ref{d29}) holds for all $W_{\omega}\in\Gamma$, which completes the proof of (\ref{d1}). The proof of (\ref{d2}) is analogous to the proof of (\ref{d1}), and is omitted here. 
\end{proof}

\subsection{The second order far-field patterns}\label{sec:3.2}

In this subsection, we show that the contribution of the second order backscattered far-field pattern can be ignored. 
According to (\ref{c2}) and (\ref{c5}), the far-field patterns $\boldsymbol u_{2,\rm p}^\infty$ and $\boldsymbol u_{2,\rm s}^\infty$ associated with incident waves $\boldsymbol u_0=\boldsymbol u_{\rm p}^{inc}$ and $\boldsymbol u_0=\boldsymbol u_{\rm s}^{inc}$, respectively, admit the following forms: 
\begin{equation}\label{e2}
\begin{aligned}
\boldsymbol{u}_{2,\rm p}^{\infty}(-\theta,\omega,\theta)&=\frac{c_{\rm p}^2}{4\pi}\theta\otimes \theta\int_{\mathbb R^3}\int_{\mathbb R^3}\rho(y)\rho(z)\boldsymbol{G}(y,z,\omega)\theta e^{{\rm i}\kappa_{\rm p}\theta\cdot (y+z)}dzdy,\\
\boldsymbol{u}_{2,\rm s}^{\infty}(-\theta,\omega,\theta)&=\frac{c_{\rm s}^2}{4\pi}(\boldsymbol{I}-\theta\otimes \theta)\int_{\mathbb R^3}\int_{\mathbb R^3}\rho(y)\rho(z)\boldsymbol{G}(y,z,\omega)\theta^{\perp} e^{{\rm i}\kappa_{\rm s}\theta\cdot (y+z)}dzdy.
\end{aligned}
\end{equation}

The main result of this subsection is stated in the following theorem. 

\begin{theorem}\label{thm4}
Let the random potential $\rho$ satisfy Assumption \ref{as:rho}, $\boldsymbol{u}_{2,\rm p}^{\infty}$ and $\boldsymbol{u}_{2,\rm s}^{\infty}$ be given by (\ref{e2}).  
For all $\theta\in{\mathbb S^2}$, it holds almost surely that 
\begin{align*}
\lim_{Q\to\infty}\frac{1}{Q}\int_{Q}^{2Q}\omega^m\left|\boldsymbol{u}_{2,\rm p}^{\infty}(-\theta,\omega,\theta)\right|^2d\omega &=0,\\
\lim_{Q\to\infty}\frac{1}{Q}\int_{Q}^{2Q}\omega^m\left|\boldsymbol{u}_{2,\rm s}^{\infty}(-\theta,\omega,\theta)\right|^2d\omega&=0.
\end{align*}
\end{theorem}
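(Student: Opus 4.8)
The plan is to prove the compressional identity; the shear one follows verbatim after replacing $c_{\rm p}$, $\theta\otimes\theta$, $\kappa_{\rm p}$ by $c_{\rm s}$, $\boldsymbol I-\theta\otimes\theta$, $\kappa_{\rm s}$. Because the integrand $\omega^m|\boldsymbol u_{2,\rm p}^\infty(-\theta,\omega,\theta)|^2$ is nonnegative, I would reduce the almost sure statement to a single moment estimate: for every fixed $\theta\in\mathbb S^2$,
\[
\mathbb E\big[\,\omega^m\,|\boldsymbol u_{2,\rm p}^\infty(-\theta,\omega,\theta)|^2\,\big]\lesssim\omega^{-\delta}\qquad(\omega\ge1)
\]
for some $\delta>0$ (it is in fact enough that the average of the left side over $[Q,2Q]$ be $\lesssim Q^{-\delta}$). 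Granting this, Tonelli's theorem gives $\mathbb E\big[\frac1{Q}\int_Q^{2Q}\omega^m|\boldsymbol u_{2,\rm p}^\infty|^2\,d\omega\big]\lesssim Q^{-\delta}$; choosing $Q=2^k$ and summing in $k$ shows $\sum_{k}\frac1{2^k}\int_{2^k}^{2^{k+1}}\omega^m|\boldsymbol u_{2,\rm p}^\infty|^2\,d\omega<\infty$ almost surely, so the dyadic averages tend to zero almost surely; and for arbitrary $Q\in[2^k,2^{k+1}]$ the inclusion $[Q,2Q]\subset[2^k,2^{k+2}]$ together with nonnegativity bounds $\frac1{Q}\int_Q^{2Q}\omega^m|\boldsymbol u_{2,\rm p}^\infty|^2 d\omega$ by a fixed multiple of the dyadic averages at levels $k$ and $k+1$, hence the full limit vanishes almost surely. (Alternatively one could subtract the mean and invoke Lemma \ref{lemmaadd1} after computing the covariance of the fluctuation via Wick's formula, but nonnegativity of the integrand makes that unnecessary here.)

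For the moment estimate I would expand, using (\ref{e2}), the quantity $|\boldsymbol u_{2,\rm p}^\infty(-\theta,\omega,\theta)|^2$ as an integral over $(y,z,y',z')\in D^4$ of $\rho(y)\rho(z)\rho(y')\rho(z')$ against the amplitude $\big(\theta\cdot\boldsymbol G(y,z,\omega)\theta\big)\overline{\big(\theta\cdot\boldsymbol G(y',z',\omega)\theta\big)}$ and the plane wave $e^{{\rm i}\kappa_{\rm p}\theta\cdot(y+z-y'-z')}$. Taking the expectation and applying Wick's formula (cf. Lemma \ref{lemmaadd2}) replaces $\mathbb E[\rho(y)\rho(z)\rho(y')\rho(z')]$ by the sum of the three pairings $K_\rho(y,z)K_\rho(y',z')+K_\rho(y,y')K_\rho(z,z')+K_\rho(y,z')K_\rho(y',z)$. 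The first pairing yields $|\mathbb E\,\boldsymbol u_{2,\rm p}^\infty(-\theta,\omega,\theta)|^2$; after inserting the oscillatory-integral form (\ref{dK}) and recalling that, modulo the $C_0^\infty(D)$ factor $\phi$ from the principal symbol of $\mathcal C_\rho$ and an integrable singularity in $y-z$ (cf. (\ref{b2})), the kernel $K_\rho(y,z)(\theta\cdot\boldsymbol G(y,z,\omega)\theta)$ is smooth in the variable $y+z$, non-stationary phase in $y+z$ gives decay $\omega^{-N}$ for every $N$, so this term is harmless. The two remaining, ``connected'', pairings carry the real content: inserting (\ref{dK}) for each factor $K_\rho$ and the kernel form (\ref{b2}) of $\boldsymbol G$, one arrives at oscillatory integrals over $D^4\times\mathbb R^3\times\mathbb R^3$ whose phase contains $\kappa_{\rm p},\kappa_{\rm s}\sim\omega$ together with the internal frequencies of the two covariance kernels. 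These I would control by combining (i) non-stationary phase (integration by parts, as in Lemma \ref{lemma3}) in the directions where the phase gradient stays bounded below, (ii) the symbol bound $s_\rho\in\mathcal S^{-m}$, which both makes the frequency integrals converge and produces an $\omega^{-m}$-scale gain once those frequencies are localized at scale $\omega$, and (iii) the local integrability in $\mathbb R^3$ of the Green-tensor singularities $|y-z|^{-1}$ and $|y'-z'|^{-1}$; equivalently, one reads off the order of the Fourier integral operator sending $\rho\otimes\rho$ to $\boldsymbol u_{2,\rm p}^\infty(-\theta,\cdot,\theta)$ and uses its mapping properties. The outcome is an estimate of the form $\mathbb E[|\boldsymbol u_{2,\rm p}^\infty(-\theta,\omega,\theta)|^2]\lesssim\omega^{-m-\delta}$ with $\delta=\delta(m)>0$ whenever $m>2$, which is what is needed.

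The main obstacle is exactly this last oscillatory-integral estimate. Unlike the first-order far field in the backscattering direction, whose phase integrated out to a clean Fourier transform of $\phi$ (see (\ref{d10})), here the phase is not linear in the spatial variables — it contains the Green-tensor terms $e^{{\rm i}\kappa_{\rm p}|y-z|}$ and $e^{{\rm i}\kappa_{\rm s}|y'-z'|}$ — so its stationary set is a positive-dimensional Lagrangian rather than a single point, and the two singular Green tensors must be balanced against the oscillation to extract a genuine negative power of $\omega$. Keeping all constants uniform in $\theta\in\mathbb S^2$, and tracking how the gain $\delta$ depends jointly on the symbol order $-m$ and on the Sobolev regularity of $\rho$ provided by Lemma \ref{lemmaadd3}, is where the care is required.
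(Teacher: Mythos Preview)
Your reduction from a moment bound to the almost-sure limit via dyadic averages is fine, and the Wick expansion is the right starting point. But the core of the proof---the pointwise-in-$\theta$ estimate $\mathbb E[|\boldsymbol u_{2,\rm p}^\infty(-\theta,\omega,\theta)|^2]\lesssim\omega^{-m-\delta}$ for the connected pairings---is precisely what you label ``the main obstacle,'' and your sketch (non-stationary phase plus symbol bounds plus local integrability of $|y-z|^{-1}$) does not explain how to extract a negative power of $\omega$ from a phase that, after inserting (\ref{dK}) for both covariance factors, lives on $D^4\times(\mathbb R^3)^2$ with the Green-tensor oscillations $e^{{\rm i}\kappa_{\rm p}|y-z|}$, $e^{{\rm i}\kappa_{\rm s}|y'-z'|}$ present. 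There is no isolated critical point here; you would have to identify the stationary manifold, compute its transverse Hessian, and track how the singularities of $\boldsymbol G$ and the symbol decay $|\xi|^{-m}$ interact along it. That is a substantial microlocal computation you have not carried out, and it is not obvious it delivers a positive $\delta$ uniformly in $\theta$. Even your treatment of the disconnected term is optimistic: the claim of $\omega^{-N}$ decay by integration by parts in $y+z$ requires that $K_\rho((g+h)/2,(g-h)/2)$ be smooth in $g$, whereas by Lemma~\ref{lemmaadd4} the remainder $F_\alpha$ is only H\"older; one can recover smoothness in $g$ by going back to the symbol representation (\ref{dK}), but you then need to justify the repeated integration by parts against an oscillatory $\xi$-integral.

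The paper bypasses all of this. It first decomposes $\boldsymbol G=\boldsymbol G_1+\boldsymbol G_2+\boldsymbol G_3$ according to (\ref{e7}), so that each component of $\boldsymbol u_{2,\rm p}^\infty$ and $\boldsymbol u_{2,\rm s}^\infty$ is a linear combination of the scalar building blocks $\mathbb I(\omega,\theta)$ and $\mathbb J(\omega,\theta)$ in (\ref{e11})--(\ref{e12}); this reduces Theorem~\ref{thm4} to Lemma~\ref{lemma4}. For the latter, instead of a pointwise moment bound, the paper proves the stronger statement $\int_1^\infty\omega^{m-1}|\mathbb F(\omega,\theta)|^2\,d\omega<\infty$ a.s.\ by controlling $\mathbb E\int_{\mathbb S^2}\int_1^\infty\omega^{m-1}|\mathbb F_\varepsilon(\omega,\theta)|^2\,d\omega\,dS(\theta)$ for the mollified potential $\rho_\varepsilon$ and passing to the limit via Fatou. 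The key device is the change of variables $g=y+z$, $h=y-z=r\xi$ with $\xi\in\mathbb S^2$: then $\mathbb I_\varepsilon(\omega,\theta)=\int_{\mathbb R^3}e^{{\rm i}\omega c_1\theta\cdot g}Tf_\varepsilon(g,\omega)\,dg$ with $Tf_\varepsilon(g,\omega)=\int e^{{\rm i}\omega c_2 r}f_\varepsilon(g,r)\,dr$, so that \emph{integrating over $\theta\in\mathbb S^2$} (a step you do not take) converts the outer integral into a spherical trace of a Fourier transform and gains $\omega^{-2}$, after which Plancherel in $r$ reduces everything to $\int|f_\varepsilon(g,r)|^2\,dg\,dr$. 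The expectation of that is handled by Wick's formula and the explicit singularity structure of $K_{\rho_\varepsilon}$ from Lemmas~\ref{lemmaadd4}--\ref{lemmaadd7}. No stationary-phase analysis is needed at any point; the extra $\theta$-average is exactly what buys the decay that your direct approach would have to win from the oscillatory integral.
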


The Green tensor $\boldsymbol{G}$ given in (\ref{b2}) can be split into three parts
\begin{eqnarray}\label{e7}
\boldsymbol{G}(y,z,\omega)=\boldsymbol{G}_1(y,z,\omega)+\boldsymbol{G}_2(y,z,\omega)+\boldsymbol{G}_3(y,z,\omega),
\end{eqnarray}
where
\begin{align*}
\boldsymbol{G}_1(y,z,\omega)&=\frac{c_{\rm s}^2}{4\pi}\frac{e^{{\rm i}\kappa_{\rm s}|y-z|}}{|y-z|}\boldsymbol{I},\\
\boldsymbol{G}_2(y,z,\omega)&=\frac{c_{\rm p}^2e^{{\rm i}\kappa_{\rm p}|y-z|}-c_{\rm s}^2e^{{\rm i}\kappa_{\rm s}|y-z|}}{4\pi |y-z|^3}(y-z)\otimes (y-z),\\
\boldsymbol{G}_3(y,z,\omega)&=\omega^{-2}\frac{\beta(y,z,\omega)}{4\pi |y-z|^5}\left[|y-z|^2\boldsymbol{I}-3(y-z)\otimes (y-z)\right].
\end{align*}
Here 
\begin{eqnarray*}
\beta(y,z,\omega):=e^{{\rm i}\kappa_{\rm s}|y-z|}({\rm i}\kappa_{\rm s}|y-z|-1)-e^{{\rm i}\kappa_{\rm p}|y-z|}({\rm i}\kappa_{\rm p}|y-z|-1).
\end{eqnarray*}
Substituting (\ref{e7}) into (\ref{e2}), we can see that $\boldsymbol{u}_{2,\rm p}^{\infty}$ and $\boldsymbol{u}_{2,\rm s}^{\infty}$ also consist of three parts corresponding to $\boldsymbol{G}_1$, $\boldsymbol{G}_2$ and $\boldsymbol{G}_3$. The components in the first and second parts are linear combinations of 
\begin{equation}\label{e11}
\mathbb I(\omega,\theta):=\int_{\mathbb R^3}\int_{\mathbb R^3}\rho(y)\rho(z)e^{{\rm i}c_1\omega \theta\cdot(y+z)}e^{{\rm i}c_2\omega |y-z|}\mathbb K(y,z)dydz
\end{equation} 
and the components in the third part are linear combinations of 
\begin{equation}\label{e12}
\mathbb J(\omega,\theta):=\omega^{-2}\int_{\mathbb R^3}\int_{\mathbb R^3}\rho(y)\rho(z)e^{{\rm i}c_1\omega \theta\cdot(y+z)}\beta(y,z,\omega)\mathbb K(y,z)dydz,
\end{equation}
where $c_1,c_2\in\{c_{\rm s}, c_{\rm p}\}$ and 
\begin{eqnarray*}\label{e13}
\mathbb K(y,z)=\frac{(y_1-z_1)^{p_1}(y_2-z_2)^{p_2}(y_3-z_3)^{p_3}}{|y-z|^{p_4}}.
\end{eqnarray*}
Here, $(p_1,p_2,p_3,p_4)\in S_{\mathbb I}$ for $\mathbb I(\omega,\theta)$ and $(p_1,p_2,p_3,p_4)\in S_{\mathbb J}$ for $\mathbb J(\omega,\theta)$ with
\begin{align*}
S_{\mathbb I}&:=\{(0,0,0,1),(2,0,0,3),(0,2,0,3),(0,0,2,3),(1,1,0,3),(1,0,1,3),(0,1,1,3)\},\\
S_{\mathbb J}&:=\{(0,0,0,3),(2,0,0,5),(0,2,0,5),(0,0,2,5),(1,1,0,5),(1,0,1,5),(0,1,1,5)\},
\end{align*}
such that $p_1+p_2+p_3-p_4=-1$ for $S_{\mathbb I}$ and  $p_1+p_2+p_3-p_4=-3$ for $S_{\mathbb J}$. 

Since the components of $\boldsymbol{u}_{2,\rm p}^{\infty}$ and $\boldsymbol{u}_{2,\rm s}^{\infty}$ are linear combinations of $\mathbb I(\omega, d)$ and $\mathbb J(\omega,\theta)$, Theorem \ref{thm4} can be obtained directly from the following lemma, whose proof is technical and put in the appendix for the purpose of readability. 

\begin{lemma}\label{lemma4}
Let the random potential $\rho$ satisfy Assumption \ref{as:rho}, and $\mathbb I(\omega,\theta)$ and $\mathbb J(\omega,\theta)$ be given by (\ref{e11}) and (\ref{e12}), respectively. For all $\theta\in\mathbb S^2$, it holds almost surely that 
\begin{align}\label{e16}
\lim_{Q\to\infty}\frac{1}{Q}\int_Q^{2Q}\omega^{m}|\mathbb I(\omega,\theta)|^2d\omega &=0,\\\label{e17}
\lim_{Q\to\infty}\frac{1}{Q}\int_Q^{2Q}\omega^{m}|\mathbb J(\omega,\theta)|^2d\omega &=0.
\end{align}
\end{lemma}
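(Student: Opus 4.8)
\emph{Plan.} Since the components of $\boldsymbol u_{2,\rm p}^\infty$ and $\boldsymbol u_{2,\rm s}^\infty$ are finite linear combinations of the integrals $\mathbb I(\omega,\theta)$ with $(p_1,\dots,p_4)\in S_{\mathbb I}$ and $\mathbb J(\omega,\theta)$ with $(p_1,\dots,p_4)\in S_{\mathbb J}$, it suffices to prove \eqref{e16}--\eqref{e17}. The strategy is to reduce the almost sure statements to averaged mean-square decay, to expand the fourth moment of $\rho$ by Isserlis' theorem, and then to estimate the resulting oscillatory integrals. Put $Y_Q:=\frac1Q\int_Q^{2Q}\omega^m|\mathbb I(\omega,\theta)|^2\,d\omega\ge0$. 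The first aim is the pointwise bound $\mathbb E|\mathbb I(\omega,\theta)|^2\lesssim\omega^{-m-\delta}$ for $\omega\ge1$, uniformly in $\theta\in\mathbb S^2$, for some $\delta=\delta(m)>0$; it gives $\mathbb E[Y_Q]\lesssim\frac1Q\int_Q^{2Q}\omega^{-\delta}\,d\omega\lesssim Q^{-\delta}$, so that $\sum_k\mathbb E[Y_{2^k}]<\infty$, which (the $Y_{2^k}$ being nonnegative) forces $\sum_kY_{2^k}<\infty$ and hence $Y_{2^k}\to0$ almost surely; for a general $Q\in[2^k,2^{k+1})$ the inclusion $[Q,2Q]\subset[2^k,2^{k+2}]$ yields $Y_Q\le Y_{2^k}+2Y_{2^{k+1}}\to0$ a.s. The identical scheme applies with $\mathbb J$ in place of $\mathbb I$. (Because $|\mathbb I|^2,|\mathbb J|^2\ge0$, only first moments enter here, so Lemmas \ref{lemmaadd1}--\ref{lemmaadd2} are not needed.)

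\emph{Fourth moment expansion.} The double integrals $\mathbb I,\mathbb J$ are interpreted through the mapping properties of $\mathcal K_\omega$ in Lemma \ref{lemma1}, so that $\mathbb E|\mathbb I(\omega,\theta)|^2$ is a fourfold spatial integral of $\mathbb E[\rho(y)\rho(z)\rho(y')\rho(z')]$ against oscillatory kernels. By Isserlis' (Wick's) theorem for the centered Gaussian field $\rho$,
\[
\mathbb E[\rho(y)\rho(z)\rho(y')\rho(z')]=K_\rho(y,z)K_\rho(y',z')+K_\rho(y,y')K_\rho(z,z')+K_\rho(y,z')K_\rho(z,y'),
\]
hence $\mathbb E|\mathbb I(\omega,\theta)|^2=|\mathbb E\mathbb I(\omega,\theta)|^2$ plus two connected terms. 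Into each one we substitute the oscillatory representation \eqref{dK} of $K_\rho$ together with $s_\rho(x,\xi)=\phi(x)|\xi|^{-m}+a(x,\xi)$, $a\in\mathcal S^{-m-1}$, and use ${\rm supp}\,K_\rho\subset D\times D$ to confine all spatial integrations to the bounded set $D$.

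\emph{Oscillatory integral estimates.} Each term becomes a multiple integral with phase $\Psi$ built from $c_1\omega\,\theta\cdot(\cdot)$, $c_2\omega|\cdot-\cdot|$ (for $\mathbb J$ also modulated by $\beta$), and linear terms in the dual variables $\xi,\xi'$ coming from the two copies of $K_\rho$. Because $\partial_y\Psi+\partial_z\Psi=2c_1\omega\theta-\xi-\xi'$ and likewise in the primed variables, away from the diagonals $\{y=z\}$, $\{y'=z'\}$ at least one spatial gradient of $\Psi$ is $\gtrsim\omega$ when $|\xi|+|\xi'|$ is small compared with $\omega$, and is $\gtrsim|\xi|+|\xi'|$ when $|\xi|+|\xi'|\gg\omega$; integration by parts in the corresponding variables then gains arbitrarily many powers of $\omega^{-1}$ or $(1+|\xi|)^{-1}$, while $|s_\rho(\cdot,\xi)|\lesssim(1+|\xi|)^{-m}$ keeps the $\xi$-integrals convergent, and the transitional set $|\xi|\sim\omega$ is handled directly with $|s_\rho(\cdot,\xi)|\lesssim\omega^{-m}$. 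The diagonal singularities are integrable: $|\mathbb K(y,z)|\lesssim|y-z|^{-1}\in L^1_{\rm loc}(\mathbb R^3)$ for $\mathbb I$, while for $\mathbb J$ the elementary bound $|\beta(y,z,\omega)|\lesssim\omega^2|y-z|^2(1+\omega|y-z|)^{-1}$ makes $\omega^{-2}\beta(y,z,\omega)\mathbb K(y,z)$ again $O(|y-z|^{-1})$ near the diagonal (indeed with an extra factor $\omega^{-1}$), $\beta$ contributing only harmless extra oscillation. Cutting out a diagonal neighborhood at an $\omega$-dependent scale and balancing the contributions, using the regularity $\rho\in W^{\frac{m-3}2-\epsilon,p}(D)$ a.s.\ (Lemma \ref{lemmaadd3}) and, where needed, the Hardy--Littlewood--Sobolev inequality, one reaches $\mathbb E|\mathbb I(\omega,\theta)|^2,\ \mathbb E|\mathbb J(\omega,\theta)|^2\lesssim\omega^{-m-\delta}$, which closes the argument.

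\emph{Main obstacle.} The crux is the last step in a neighborhood of the diagonal $y=z$, where the phase $c_2\omega|y-z|$ degenerates (its gradient is unbounded and stationary phase fails) while $\mathbb K$ is singular: the $\omega$-dependent cutoff scale must be chosen carefully, the powers of $\omega$ gained by integration by parts on the non-diagonal region counted precisely, and the near-diagonal contribution shown to be $o(\omega^{-m})$. The elastic setting aggravates this through the three-term splitting \eqref{e7} of the Green tensor, which forces the separate treatment of $\mathbb I$ and $\mathbb J$, the extra $\omega^{-2}$ and $\beta$-oscillation in $\mathbb J$, and the whole family of kernels $\mathbb K$ indexed by $S_{\mathbb I}\cup S_{\mathbb J}$.
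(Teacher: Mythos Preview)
Your plan departs from the paper's proof in a crucial way: you aim for the \emph{pointwise-in-$\theta$} bound $\mathbb E|\mathbb I(\omega,\theta)|^2\lesssim\omega^{-m-\delta}$ and then run a Borel--Cantelli argument along dyadic $Q$. The paper instead integrates over $\theta\in\mathbb S^2$ first, reducing the claim to $\mathbb E\int_{\mathbb S^2}\int_1^\infty\omega^{m-1}|\mathbb F(\omega,\theta)|^2\,d\omega\,dS(\theta)<\infty$, and then---after the change of variables $g=y+z$, $h=y-z=r\xi$---uses a spherical Fourier estimate (cf.\ \cite[Lemma~4.7]{CHL}) to gain $\omega^{-2}$ from the $\theta$-average, followed by Plancherel in the $r$-variable. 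No pointwise decay rate in $\omega$ is ever extracted; finiteness of the $\omega$-integral comes directly from Plancherel and the kernel bounds of Lemmas~\ref{lemmaadd4}--\ref{lemmaadd7}. Continuity in $\theta$ then upgrades the $\theta$-averaged statement to the a.s.\ statement for every fixed $\theta$.

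The gap in your argument is real. First, the pointwise bound you target is strictly stronger than what the paper establishes (the $\theta$-averaging is what produces the extra decay), and nothing in your sketch secures a polynomial margin $\delta>0$. Second, your proposed mechanism---substitute the oscillatory symbol representation \eqref{dK} of $K_\rho$ and integrate by parts in the spatial variables---is problematic here: because $m\le 3$, the amplitude bound $|s_\rho(\cdot,\xi)|\lesssim(1+|\xi|)^{-m}$ is \emph{not} integrable in $\xi\in\mathbb R^3$, so the $\xi$-integrals you would need after the integrations by parts fail to converge absolutely. The paper sidesteps this entirely by working with the explicit local form of $K_\rho$ (Lemma~\ref{lemmaadd4}) rather than its symbol. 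Third, the ``main obstacle'' you flag---the degenerate phase $c_2\omega|y-z|$ near the diagonal together with the $|y-z|^{-1}$ singularity of $\mathbb K$---is not merely a bookkeeping issue: balancing an $\omega$-dependent diagonal cutoff against integration-by-parts gains is exactly the hard step, and no scale choice or power count is supplied. Without that, the proposal is a plausible outline but not a proof; adopting the paper's $\theta$-averaging plus Plancherel route avoids all three difficulties simultaneously.
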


\subsection{The higher order far-field patterns}\label{sec:3.3}

It follows from (\ref{c7}) that the higher order backscattered far-field patterns can be expressed by
\begin{align}\label{f1}
{\boldsymbol b}^{\infty}_{\rm p}(- \theta,\omega, \theta)&=\sum_{j=3}^{\infty}{\boldsymbol u}^{\infty}_{j,\rm p}(- \theta,\omega, \theta),\\\label{f2}
{\boldsymbol b}^{\infty}_{\rm s}(- \theta,\omega, \theta)&=\sum_{j=3}^{\infty}{\boldsymbol u}^{\infty}_{j,\rm s}(- \theta,\omega, \theta),
\end{align}
where
\begin{align}\label{f3}
{\boldsymbol u}^{\infty}_{j,\rm p}(- \theta,\omega, \theta)&= -\frac{c_{\rm p}^2}{4\pi} \theta\otimes  \theta\int_{\mathbb R^3}e^{{\rm i}c_{\rm p}\omega  \theta\cdot z}\rho(z){\boldsymbol u}_{j-1}(z)dz,\\\label{f4}
{\boldsymbol u}^{\infty}_{j,\rm s}(- \theta,\omega, \theta)&= -\frac{c_{\rm s}^2}{4\pi}({\boldsymbol I}- \theta\otimes  \theta)\int_{\mathbb R^3}e^{{\rm i}c_{\rm s}\omega  \theta\cdot z}\rho(z){\boldsymbol u}_{j-1}(z)dz.
\end{align}
The goal is to estimate the order of ${\boldsymbol b}^{\infty}_{\rm p}$ and ${\boldsymbol b}^{\infty}_{\rm s}$ with respect to the frequency $\omega$, and show that the contribution of the higher order far-field patterns can be ignored as well.

\begin{theorem}\label{tm:6}
For any $s\in(\frac{3-m}2,\frac12)$, it holds almost surely that 
\begin{align}\label{f5}
\sup_{\theta\in\mathbb S^2}\left|{\boldsymbol b}_{\rm p}^{\infty}(- \theta,\omega, \theta)\right|&\lesssim \omega^{-2+6s},\\\label{f6}
\sup_{\theta\in\mathbb S^2}\left|{\boldsymbol b}_{\rm s}^{\infty}(- \theta,\omega, \theta)\right|&\lesssim \omega^{-2+6s}.
\end{align}
\end{theorem}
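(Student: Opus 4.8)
The plan is to bound each summand $\boldsymbol u_{j,\rm p}^\infty(-\theta,\omega,\theta)$ of the series (\ref{f1}) for $j\ge3$ by a quantity that decays geometrically in $j$ and carries a controlled power of $\omega$, and then sum the geometric series. Fix $s\in(\frac{3-m}2,\frac12)$ as in the statement; this is exactly the range for which Lemma \ref{lemma1} applies, and, choosing $\gamma\in(\frac{3-m}2,s]$ and $p\in(1,\infty)$ as in its proof, one has $\rho\in W^{-\gamma,p}(D)$ almost surely by Lemma \ref{lemmaadd3}(ii). Let $\chi\in C_0^\infty(\mathbb R^3)$ with $\chi\equiv1$ on a neighborhood of $D$. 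Since ${\rm supp}\,\rho\subset D$, each component of the integral in (\ref{f3}) is a dual pairing of $\rho\boldsymbol u_{j-1}$ with the truncated plane wave $\chi e^{-{\rm i}c_{\rm p}\omega\theta\cdot(\cdot)}$, so
\begin{equation*}
\bigl|\boldsymbol u_{j,\rm p}^\infty(-\theta,\omega,\theta)\bigr|\lesssim\|\rho\boldsymbol u_{j-1}\|_{\boldsymbol H^{-s}_1(\mathbb R^3)}\,\|\chi e^{-{\rm i}c_{\rm p}\omega\theta\cdot(\cdot)}\|_{\boldsymbol H^s_{-1}(\mathbb R^3)}.
\end{equation*}
The first factor is handled by the estimate proved inside Lemma \ref{lemma1}, namely $\|\rho\boldsymbol u_{j-1}\|_{\boldsymbol H^{-s}_1(\mathbb R^3)}\lesssim\|\rho\|_{W^{-\gamma,p}(D)}\|\boldsymbol u_{j-1}\|_{\boldsymbol H^s_{-1}(\mathbb R^3)}$; the second by $\|\chi e^{-{\rm i}c_{\rm p}\omega\theta\cdot(\cdot)}\|_{\boldsymbol H^s_{-1}(\mathbb R^3)}\le\|\chi e^{-{\rm i}c_{\rm p}\omega\theta\cdot(\cdot)}\|_{\boldsymbol H^s(\mathbb R^3)}\lesssim\omega^s$, uniformly in $\theta\in\mathbb S^2$, via the same $\boldsymbol L^2$--$\boldsymbol H^1$ interpolation used for $\boldsymbol u_0$ in Section 2 (the $\boldsymbol H^1$ norm of the truncated plane wave being $\lesssim\omega$). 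This gives $|\boldsymbol u_{j,\rm p}^\infty(-\theta,\omega,\theta)|\lesssim\omega^s\|\boldsymbol u_{j-1}\|_{\boldsymbol H^s_{-1}(\mathbb R^3)}$.

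Next I would insert the decay of the Born iterates. Writing $\boldsymbol u_{j-1}=(-\mathcal K_\omega)^{j-1}\boldsymbol u_0$ and noting that $\mathcal K_\omega$ depends on $\boldsymbol u_0$ only through $\boldsymbol u_0|_D$, Lemma \ref{lemma1} together with $\|\boldsymbol u_0\|_{\boldsymbol H^s_{-1}(D)}\lesssim\omega^s$ gives, for $j\ge2$,
\begin{equation*}
\|\boldsymbol u_{j-1}\|_{\boldsymbol H^s_{-1}(\mathbb R^3)}\lesssim\|\mathcal K_\omega\|_{\mathcal L(\boldsymbol H^s_{-1}(\mathbb R^3),\boldsymbol H^s_{-1}(\mathbb R^3))}^{\,j-2}\|\mathcal K_\omega\|_{\mathcal L(\boldsymbol H^s_{-1}(D),\boldsymbol H^s_{-1}(\mathbb R^3))}\|\boldsymbol u_0\|_{\boldsymbol H^s_{-1}(D)}\lesssim(\omega^{-1+2s})^{j-1}\omega^s,
\end{equation*}
exactly as in the convergence argument for the Born series in Section 2. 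Hence $|\boldsymbol u_{j,\rm p}^\infty(-\theta,\omega,\theta)|\lesssim\omega^{2s}(\omega^{-1+2s})^{j-1}$ uniformly in $\theta$. Since $s<\frac12$, the ratio $\omega^{-1+2s}<1$ once $\omega$ is large enough (how large depends on $\|\rho\|_{W^{-\gamma,p}(D)}$ through the implicit constants), so summing over $j\ge3$ yields a convergent geometric series,
\begin{equation*}
\sup_{\theta\in\mathbb S^2}\bigl|\boldsymbol b_{\rm p}^\infty(-\theta,\omega,\theta)\bigr|\lesssim\sum_{j=3}^\infty\omega^{2s}(\omega^{-1+2s})^{j-1}\lesssim\omega^{2s}(\omega^{-1+2s})^{2}=\omega^{-2+6s},
\end{equation*}
which is (\ref{f5}); the implicit constant is random but almost surely finite, being a fixed power of $\|\rho\|_{W^{-\gamma,p}(D)}$. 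The shear bound (\ref{f6}) is obtained by the verbatim argument applied to (\ref{f4}), with $c_{\rm p}$ and $\theta\otimes\theta$ replaced by $c_{\rm s}$ and $\boldsymbol I-\theta\otimes\theta$, the latter matrix being bounded uniformly in $\theta$.

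I expect the only subtle points to be: (i) the legitimacy of the duality step, since the plane wave $e^{-{\rm i}c_{\rm p}\omega\theta\cdot(\cdot)}$ is not of Sobolev class on all of $\mathbb R^3$ — this is repaired by the truncation $\chi$ and the compact support of $\rho$, and one must check that the truncation does not spoil uniformity in $\theta$ (it does not, since $\|\chi e^{-{\rm i}c_{\rm p}\omega\theta\cdot(\cdot)}\|_{\boldsymbol H^s}$ depends on $\theta$ only through $|\omega\theta|=\omega$); and (ii) the exponent bookkeeping through the Born series, which must be tight enough that the surplus factor $\omega^{2s}$ coming from the plane-wave term is fully absorbed by the geometric decay $(\omega^{-1+2s})^{j-1}$, producing exactly the net power $\omega^{-2+6s}$. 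All the remaining ingredients — the operator bounds of Lemma \ref{lemma1} and the regularity $\rho\in W^{-\gamma,p}(D)$ of Lemma \ref{lemmaadd3} — are already in hand.
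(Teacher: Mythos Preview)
Your proposal is correct and follows essentially the same strategy as the paper's proof: truncate the plane wave by a cutoff, bound it in $H^s$ by $\omega^s$ via interpolation, control the Born iterates by the operator estimate of Lemma~\ref{lemma1}, and sum. The only cosmetic difference is that the paper first sums $\sum_{j\ge3}\boldsymbol u_{j-1}$ and then pairs with $\rho$ via the $W^{-s,p}\times W^{s,p'}$ duality together with the product estimate $\|fg\|_{W^{s,p'}}\lesssim\|f\|_{H^s}\|g\|_{H^s}$ (valid for $p\ge 3/s$), whereas you pair term by term via the $\boldsymbol H^{-s}_1\times\boldsymbol H^s_{-1}$ duality using the multiplication bound $\|\rho\boldsymbol v\|_{\boldsymbol H^{-s}_1}\lesssim\|\rho\|_{W^{-\gamma,p}(D)}\|\boldsymbol v\|_{\boldsymbol H^s_{-1}}$ already extracted in the proof of Lemma~\ref{lemma1}; the resulting exponents $\omega^s\cdot\omega^s\cdot(\omega^{-1+2s})^{j-1}$ and the summed bound $\omega^{-2+6s}$ coincide.
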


\begin{proof}
Define a cutoff function $\chi\in C_0^{\infty}(\mathbb R^3)$ supported in bounded domain $U$ such that $D\subset U$ and $\chi(z)=1$ if $z\in D$. For any $s\in(\frac{3-m}2,\frac12)$, $p\geq 3/s$ and $p'$ satisfying $1/p+1/p'=1$, 
it follows from (\ref{f1}) and (\ref{f3}) that
\begin{align}\nonumber
\left|{\boldsymbol b}_{\rm p}^{\infty}(- \theta,\omega, \theta)\right|&\lesssim \bigg|\int_{\mathbb R^3}e^{{\rm i}c_{\rm p}\omega  \theta\cdot z}\chi(z)\rho(z)\sum_{j=3}^{\infty}{\boldsymbol u}_{j-1}(z)dz\bigg|\\\nonumber
&\lesssim \|\rho\|_{W^{-s,p}(\mathbb R^3)}\Big\|\chi e^{{\rm i}c_{\rm p}\omega \theta\cdot (\cdot)}\chi\sum_{j=3}^{\infty}{\boldsymbol u}_{j-1}\Big\|_{\boldsymbol W^{s,p'}(\mathbb R^3)}\\\nonumber
&\lesssim \|\rho\|_{W^{-s,p}(\mathbb R^3)}\big\|\chi e^{{\rm i}c_{\rm p}\omega \theta\cdot (\cdot)}\big\|_{H^{s}(\mathbb R^3)}\Big\|\chi\sum_{j=3}^{\infty}{\boldsymbol u}_{j-1}\Big\|_{\boldsymbol H^{s}(\mathbb R^3)}\\\label{f7}
&\lesssim\|\rho\|_{W^{-s,p}(\mathbb R^3)}\big\|\chi e^{{\rm i}c_{\rm p}\omega \theta\cdot (\cdot)}\big\|_{H^{s}(\mathbb R^3)}\Big\|\sum_{j=3}^{\infty}{\boldsymbol u}_{j-1}\Big\|_{\boldsymbol H^{s}_{-1}(\mathbb R^3)},
\end{align}
where we used the facts (cf. \cite{CHL,LLW})
\[
\|fg\|_{W^{s,p'}(\mathbb R^3)}\lesssim\|f\|_{H^s(\mathbb R^3)}\|g\|_{H^s(\mathbb R^3)}\quad\forall~f,g\in\mathcal S
\]
for $p\ge\frac3s$ and $p'$ satisfying $\frac1p+\frac1{p'}=1$, and 
\[
\|\chi\boldsymbol u\|_{\boldsymbol H^s(\mathbb R^3)}\lesssim\|\boldsymbol u\|_{\boldsymbol H^s_{-2}(\mathbb R^3)}\lesssim\|\boldsymbol u\|_{\boldsymbol H^s_{-1}(\mathbb R^3)}\quad\forall~ \boldsymbol u\in\boldsymbol{\mathcal S}
\] 
with $\boldsymbol{\mathcal S}$ being dense in $\boldsymbol H^s_{-1}(\mathbb R^3)$. It is easy to check that 
\begin{eqnarray*}\label{f8}
\big\|\chi e^{{\rm i}c_{\rm p}\omega \theta\cdot (\cdot)}\big\|_{L^2(\mathbb R^3)}\lesssim 1,\quad \big\|\chi e^{{\rm i}c_{\rm p}\omega \theta\cdot (\cdot)}\big\|_{H^1(\mathbb R^3)}\lesssim \omega.
\end{eqnarray*}
Using the interpolation between spaces $L^2(\mathbb R^3)$ and $H^1(\mathbb R^3)$ yields 
\begin{eqnarray}\label{f9}
\big\|\chi e^{{\rm i}c_{\rm p}\omega \theta\cdot(\cdot)}\big\|_{H^{s}(\mathbb R^3)}\lesssim \big\|\chi e^{{\rm i}c_{\rm p}\omega \theta\cdot(\cdot)}\big\|_{L^2(\mathbb R^3)}^{1-s}\big\|\chi e^{{\rm i}c_{\rm p}\omega \theta\cdot(\cdot)}\big\|^s_{H^1(\mathbb R^3)}\lesssim \omega^s.
\end{eqnarray}
Note also that
\begin{align}\nonumber
\Big\|\sum_{j=3}^{\infty}\boldsymbol{u}_{j-1}\Big\|_{\boldsymbol H^s_{-1}(\mathbb R^3)}&=\Big\|\sum_{j=2}^{\infty}\boldsymbol{u}_j\Big\|_{H^s_{-1}(\mathbb R^3)}
\lesssim\sum_{j=2}^{\infty}\|{\mathcal K}_{\omega}^j\boldsymbol{u}_0\|_{H^s_{-1}(\mathbb R^3)}\\\nonumber
&\lesssim \sum_{j=2}^{\infty}\|{\mathcal K}_{\omega}\|^{j-1}_{\mathcal{L}(H^s_{-1}(\mathbb R^3),H^s_{-1}(\mathbb R^3))}\|{\mathcal K}_{\omega}\|_{\mathcal{L}(H^s_{-1}(D),H^s_{-1}(\mathbb R^3))}\|\boldsymbol{u}_0\|_{H^s_{-1}(D)}\\\nonumber
&\lesssim \sum_{j=2}^{\infty}\|{\mathcal K}_{\omega}\|^j_{\mathcal{L}(H^s_{-1}(\mathbb R^3),H^s_{-1}(\mathbb R^3))}\|\boldsymbol{u}_0\|_{H^s(D)}\\\label{f10}
&\lesssim \sum_{j=2}^{\infty}\omega^{j(-1+2s)}\omega^s\lesssim\omega^{-2+5s},
\end{align}
where we use Lemma \ref{lemma1} and the inequality $\|\cdot\|_{H^s_{-1}(\mathbb R^3)}\leq \|\cdot\|_{H^s(\mathbb R^3)}$ which can be easily checked by the definition. Combining (\ref{f7})--(\ref{f10}), we get 
\begin{eqnarray*}\label{f11}
\sup_{\theta\in\mathbb S^2}\left|{\boldsymbol b}_{\rm p}^{\infty}(-\theta,\omega,\theta)\right|\lesssim \omega^{-2+6s},
\end{eqnarray*}
which completes the proof of (\ref{f5}). The estimate (\ref{f6}) can be obtained similarly by using (\ref{f2}) and (\ref{f4}).
\end{proof}

Now we are in the position to prove the main result of the work. 

\begin{proof}[Proof of Theorem \ref{thm1}]
Recall from \eqref{c7} that the compressional far-field pattern $\boldsymbol{u}_{\rm p}^{\infty}$ has the form
\begin{eqnarray*}\label{g4}
\boldsymbol{u}_{\rm p}^{\infty}(-\theta,\omega,\theta)=\boldsymbol{u}_{1,\rm p}^{\infty}(-\theta,\omega,\theta)+\boldsymbol{u}_{2,\rm p}^{\infty}(-\theta,\omega,\theta)+\boldsymbol{b}_{\rm p}^{\infty}(-\theta,\omega,\theta).
\end{eqnarray*}
A simple calculation gives 
\begin{eqnarray*}\label{g5}
\frac{1}{Q}\int_{Q}^{2Q}\omega^m\boldsymbol{u}_{\rm p}^{\infty}(-\theta,\omega,\theta)\cdot\overline{\boldsymbol{u}_{\rm p}^{\infty}(-\theta,\omega+\tau,\theta)}d\omega=\sum_{j=1}^9I_j,
\end{eqnarray*}
where 
\begin{align*}
I_1&=\frac{1}{Q}\int_{Q}^{2Q}\omega^m\boldsymbol{u}_{1,\rm p}^{\infty}(-\theta,\omega,\theta)\cdot\overline{\boldsymbol{u}_{1,\rm p}^{\infty}(-\theta,\omega+\tau,\theta)}d\omega,\\
I_2&=\frac{1}{Q}\int_{Q}^{2Q}\omega^m\boldsymbol{u}_{1,\rm p}^{\infty}(-\theta,\omega,\theta)\cdot\overline{\boldsymbol{u}_{2,\rm p}^{\infty}(-\theta,\omega+\tau,\theta)}d\omega,\\
I_3&=\frac{1}{Q}\int_{Q}^{2Q}\omega^m\boldsymbol{u}_{1,\rm p}^{\infty}(-\theta,\omega,\theta)\cdot\overline{\boldsymbol{b}_{\rm p}^{\infty}(-\theta,\omega+\tau,\theta)}d\omega,\\
I_4&=\frac{1}{Q}\int_{Q}^{2Q}\omega^m\boldsymbol{u}_{2,\rm p}^{\infty}(-\theta,\omega,\theta)\cdot\overline{\boldsymbol{u}_{1,\rm p}^{\infty}(-\theta,\omega+\tau,\theta)}d\omega,\\
I_5&=\frac{1}{Q}\int_{Q}^{2Q}\omega^m\boldsymbol{u}_{2,\rm p}^{\infty}(-\theta,\omega,\theta)\cdot\overline{\boldsymbol{u}_{2,\rm p}^{\infty}(-\theta,\omega+\tau,\theta)}d\omega,\\
I_6&=\frac{1}{Q}\int_{Q}^{2Q}\omega^m\boldsymbol{u}_{2,\rm p}^{\infty}(-\theta,\omega,\theta)\cdot\overline{\boldsymbol{b}_{\rm p}^{\infty}(-\theta,\omega+\tau,\theta)}d\omega,\\
I_7&=\frac{1}{Q}\int_{Q}^{2Q}\omega^m\boldsymbol{b}_{\rm p}^{\infty}(-\theta,\omega,\theta)\cdot\overline{\boldsymbol{u}_{1,\rm p}^{\infty}(-\theta,\omega+\tau,\theta)}d\omega,\\
I_8&=\frac{1}{Q}\int_{Q}^{2Q}\omega^m\boldsymbol{b}_{\rm p}^{\infty}(-\theta,\omega,\theta)\cdot\overline{\boldsymbol{u}_{2,\rm p}^{\infty}(-\theta,\omega+\tau,\theta)}d\omega,\\
I_9&=\frac{1}{Q}\int_{Q}^{2Q}\omega^m\boldsymbol{b}_{\rm p}^{\infty}(-\theta,\omega,\theta)\cdot\overline{\boldsymbol{b}_{\rm p}^{\infty}(-\theta,\omega+\tau,\theta)}d\omega.
\end{align*}
It follows from Theorems \ref{thm3}, \ref{thm4} and \ref{tm:6} that 
\[
\lim_{Q\to\infty}I_1=C_{\rm p}\hat{\phi}(2c_{\rm p}\tau  \theta),
\] 
\begin{eqnarray*}
\Big|\lim_{Q\to\infty}I_2\Big|\leq \lim_{Q\to\infty}\left[\frac{1}{Q}\int_{Q}^{2Q}\omega^m|\boldsymbol{u}_{1,\rm p}^{\infty}(-\theta,\omega,\theta)|^2d\omega\right]^{\frac{1}{2}}\left[\frac{1}{Q}\int_Q^{2Q}\omega^m|\boldsymbol{u}_{2,\rm p}^{\infty}(-\theta,\omega+\tau,\theta)|^2d\omega\right]^{\frac{1}{2}}=0,
\end{eqnarray*}
 and 
\begin{align*}
\Big|\lim_{Q\to\infty}I_3\Big|&\leq \lim_{Q\to\infty}\left[\frac{1}{Q}\int_{Q}^{2Q}\omega^m|\boldsymbol{u}_{1,\rm p}^{\infty}(-\theta,\omega,\theta)|^2d\omega\right]^{\frac{1}{2}}\left[\frac{1}{Q}\int_Q^{2Q}\omega^m|\boldsymbol{b}_{\rm p}^{\infty}(-\theta,\omega+\tau,\theta)|^2d\omega\right]^{\frac{1}{2}}\\
&\lesssim \lim_{Q\to\infty}\left[\frac{1}{Q}\int_{Q}^{2Q}\omega^m|\boldsymbol{u}_{1,\rm p}^{\infty}(-\theta,\omega,\theta)|^2d\omega\right]^{\frac{1}{2}}\left[\frac{1}{Q}\int_Q^{2Q}\omega^m(\omega+\tau)^{-4+12s}d\omega\right]^{\frac{1}{2}}\\
&\leq \lim_{Q\to\infty}\left[\frac{1}{Q}\int_{Q}^{2Q}\omega^m|\boldsymbol{u}_{1,\rm p}^{\infty}(-\theta,\omega,\theta)|^2d\omega\right]^{\frac{1}{2}}\left[\frac{1}{Q}\int_Q^{2Q}\omega^{m-4+12s}d\omega\right]^{\frac{1}{2}}\\
&= \left[C_{\rm p}\hat{\phi}(0)\right]^{\frac{1}{2}}\lim_{Q\to\infty}\left[\frac{(2Q)^{m-3+12s}-Q^{m-3+12s}}{(m-3+12s)Q}\right]^{\frac{1}{2}}=0
\end{align*}
for any $s\in(\frac{3-m}2,\frac13-\frac m{12})$, where the domain is nonempty since $m>\frac{14}5$ and hence such an $s$ exists.

Based on the same procedure used for the estimates of $I_2$ and $I_3$, we may also show that $I_4$, $I_5$, $I_6$, $I_7$, $I_8$, $I_9$ have limit zeros when $Q\to\infty$, and conclude that (\ref{a5}) holds. The result (\ref{a6}) for the shear far-field pattern can be obtained similarly. The details are omitted. 

Due to continuation of a dense set and the fact that $\phi$ is analytic, the microlocal strength $\phi$ can be uniquely determined by $\{\hat{\phi}(2c_{\rm p}\tau  \theta)\}_{(\tau,\theta)\in\Theta}$ or $\{\hat{\phi}(2c_{\rm s}\tau  \theta)\}_{(\tau,\theta)\in\Theta}$ with $\Theta$ being any open domain of $\mathbb R_+\times\mathbb S^2$. 
\end{proof}

\section{Conclusion}

In this paper, we have studied the inverse scattering problem for the three-dimensional time-harmonic elastic wave equation with a random potential. The random potential is assumed to be a microlocally isotropic Gaussian random field such that its covariance operator is a classical pseudo-differential operator, and should be interpreted as a distribution. For the direct problem, we prove that it is well-posed in the distribution sense by examining the equivalent Lippmann--Schwinger integral equation. For the inverse scattering problem, we show that the strength of the random potential can be uniquely determined by a single realization of the high frequency limit of the averaged compressional (resp. shear) backscattered far-field pattern of the scattered wave associated to the compressional (resp. shear) plane incident wave.

This paper is concerned with the three-dimensional problem in a homogeneous medium, where the Green tensor has an explicit form which makes it possible to get the reconstruction formula of the strength. The inverse scattering problem is open for the two-dimensional case, where the Green tensor consists of the Hankel functions. Although it is a  two-dimensional problem, the difficult arises from the complex form of the Green tensor and more sophisticated analysis is needed for the involved Hankel functions. The problems are even more challenging if the medium is inhomogeneous where the explicit Green tensors are not available any longer. We hope to be able to report the progress on these problems elsewhere in the future. 

\appendix

\section{Appendix}\label{app:A}

This appendix is devoted to the proof of Lemma \ref{lemma4}. First, we present some preliminary results about the microlocally isotropic Gaussian random fields in $\mathbb R^n$. The following lemma concerns the kernel function of the covariance operator for the Gaussian random field and the details can be found in \cite{CHL}.

\begin{lemma}\label{lemmaadd4}
Let $\rho$ be a microlocally isotropic Gaussian random field of order
$-m$ in $D\subset\mathbb R^n$ with $m\in(n-1,n]$. Then there exists two functions $F_0\in C^{\infty}(\mathbb R^n\times\mathbb R^n)$ and $F_{\alpha}\in C^{0,\alpha}(\mathbb R^n\times\mathbb R^n)$ with $\alpha\in(0,m-(n-1))$  supported on $D\times D$ such that the covariance kernel $K_\rho$ has the form
\begin{eqnarray*}
K_\rho(x, y)=\begin{cases}
           F_0(x,y)|x-y|^{-(n-m)}+F_{\alpha}(x,y)&\quad{\rm if}\quad m\in(n-1,n),\\
           F_0(x,y){\rm log}|x-y|+F_{\alpha}(x,y)&\quad{\rm if}\quad m=n.
          \end{cases}
\end{eqnarray*}
\end{lemma}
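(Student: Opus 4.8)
The plan is to read off the singularity of $K_\rho$ near the diagonal directly from the full symbol $s_\rho\in\mathcal S^{-m}(\mathbb R^n\times\mathbb R^n)$ of the covariance operator $\mathcal C_\rho$. Starting from the oscillatory--integral representation $K_\rho(x,y)=(2\pi)^{-n}\int_{\mathbb R^n}e^{{\rm i}(y-x)\cdot\xi}s_\rho(y,\xi)\,d\xi$ (cf. \eqref{dK}; the base point at which the symbol is evaluated only affects lower-order terms), I would fix a cutoff $\chi\in C^\infty(\mathbb R^n)$ with $\chi\equiv0$ near the origin and $\chi\equiv1$ on $\{|\xi|\ge1\}$, and decompose $s_\rho(x,\xi)=\phi(x)\chi(\xi)|\xi|^{-m}+r(x,\xi)$. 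Since the principal symbol of $\mathcal C_\rho$ is $\phi(x)|\xi|^{-m}$, for $|\xi|\ge1$ the remainder $r$ is a symbol of order $-m-1$, while for $|\xi|\le1$ both $s_\rho$ and $\phi(x)\chi(\xi)|\xi|^{-m}$ are smooth; hence $r\in\mathcal S^{-(m+1)}(\mathbb R^n\times\mathbb R^n)$. This yields a splitting $K_\rho=K_{\rm pr}+K_{\rm rem}$ with $K_{\rm pr}(x,y)=\phi(y)\,g_0(y-x)$, where $g_0(w):=(2\pi)^{-n}\int e^{{\rm i}w\cdot\xi}\chi(\xi)|\xi|^{-m}\,d\xi$, and $K_{\rm rem}(x,y)=(2\pi)^{-n}\int e^{{\rm i}(y-x)\cdot\xi}r(y,\xi)\,d\xi$.

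For the remainder, the decisive point is that $m+1>n$ because $m>n-1$; hence the $\xi$--integral converges absolutely and $K_{\rm rem}$ is a genuine function. Writing $K_{\rm rem}(x,y)=F(y,y-x)$ with $F(x',w):=(2\pi)^{-n}\int e^{{\rm i}w\cdot\xi}r(x',\xi)\,d\xi$, I would bound $|F(x',w)-F(x',w')|$ by inserting $|e^{{\rm i}w\cdot\xi}-e^{{\rm i}w'\cdot\xi}|\le\min(2,|w-w'|\,|\xi|)$ and splitting the integral at $|\xi|\sim|w-w'|^{-1}$; together with $|r(x',\xi)|\lesssim(1+|\xi|)^{-m-1}$ this gives $|F(x',w)-F(x',w')|\lesssim|w-w'|^{\,m+1-n}$, with only a logarithmic loss in the borderline case $m=n$. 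Hence $F(x',\cdot)\in C^{0,\alpha}(\mathbb R^n)$ uniformly in $x'$ for every $\alpha<m-(n-1)$. Differentiation in $x'$ preserves the symbol order, so $F$ is $C^\infty$ in its first argument with all derivatives Hölder of order $\alpha$ in $w$; composing with the smooth map $(x,y)\mapsto(y,y-x)$ yields $K_{\rm rem}\in C^{0,\alpha}(\mathbb R^n\times\mathbb R^n)$.

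For the principal part I would invoke the classical Fourier transform of homogeneous distributions. When $m\in(n-1,n)$, the Riesz formula gives $(2\pi)^{-n}\int e^{{\rm i}w\cdot\xi}|\xi|^{-m}\,d\xi=c_{n,m}|w|^{-(n-m)}$ for an explicit constant $c_{n,m}>0$, while $(2\pi)^{-n}\int e^{{\rm i}w\cdot\xi}(1-\chi(\xi))|\xi|^{-m}\,d\xi$ is the restriction to $\mathbb R^n$ of an entire function — the integrand being compactly supported and, since $m<n$, integrable — hence $C^\infty$; thus $g_0(w)=c_{n,m}|w|^{-(n-m)}-h(w)$ with $h\in C^\infty(\mathbb R^n)$. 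When $m=n$, $|\xi|^{-n}$ is no longer locally integrable and one uses instead the critical-case identity $(2\pi)^{-n}\int e^{{\rm i}w\cdot\xi}\chi(\xi)|\xi|^{-n}\,d\xi=c_n\log|w|+h(w)$ with $h\in C^\infty(\mathbb R^n)$ (obtainable by analytic continuation in $m$ from the previous line, the prefactor $c_{n,m}$ blowing up like $(n-m)^{-1}$ as $m\uparrow n$). Finally, choosing $\psi\in C_0^\infty(D)$ with $\psi\equiv1$ near ${\rm supp}\,\rho$ — legitimate since ${\rm supp}\,K_\rho\subset D\times D$ and $\psi\phi=\phi$ — and multiplying the whole identity by $\psi(x)\psi(y)$, I set $F_0(x,y):=c_{n,m}\,\psi(x)\phi(y)$ (resp. $c_n\,\psi(x)\phi(y)$ when $m=n$) and absorb into $F_\alpha(x,y)$ the terms $-\psi(x)\phi(y)h(y-x)$, $\psi(x)\psi(y)K_{\rm rem}(x,y)$, and the $C^\infty$ kernel of any smoothing operator discarded when passing to a properly supported representative. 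Then $F_0\in C^\infty$, $F_\alpha\in C^{0,\alpha}$, both supported in $D\times D$, and $K_\rho=F_0(x,y)|x-y|^{-(n-m)}+F_\alpha(x,y)$ (resp. with $\log|x-y|$).

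I expect the main difficulties to be: (i) extracting the sharp Hölder exponent $m-(n-1)$ from the remainder of order $-(m+1)$ through the annular splitting of the $\xi$--integral, including the logarithmic endpoint at $m=n$; and (ii) the critical case $m=n$ of the principal part, where $|\xi|^{-n}$ must be treated as a regularized (finite-part) homogeneous distribution so that its inverse Fourier transform produces exactly the logarithmic kernel. The bookkeeping of supports is routine once one is willing to place $C^\infty$ contributions into $F_0$ and $C^{0,\alpha}$ contributions into $F_\alpha$; in particular, the asymmetry caused by evaluating $\phi$ at $y$ rather than at $(x+y)/2$ is harmless, since $(\phi(y)-\phi(x))|x-y|^{-(n-m)}=O(|x-y|^{\,m+1-n})$ is itself of class $C^{0,\alpha}$.
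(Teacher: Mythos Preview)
The paper does not actually prove this lemma; it only states that ``the details can be found in [CHL]'' (Caro--Helin--Lassas). So there is no in-paper argument to compare your sketch against. Your approach is correct and is the standard one for extracting the diagonal singularity of the kernel of a classical pseudo-differential operator: split the full symbol as $\phi(y)\chi(\xi)|\xi|^{-m}+r(y,\xi)$ with $r\in\mathcal S^{-(m+1)}$, recover the $|x-y|^{-(n-m)}$ (resp.\ $\log|x-y|$) behaviour of the principal part from the Riesz/critical Fourier formula, and read off $C^{0,\alpha}$ regularity of the remainder kernel from $m+1>n$ via the $\min(2,|w-w'||\xi|)$ bound and the split at $|\xi|\sim|w-w'|^{-1}$. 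This is exactly the line of reasoning the cited reference follows, so your proposal is in agreement with what the paper defers to.

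One small point of bookkeeping worth tightening: when you pick $\psi\in C_0^\infty(D)$ with $\psi\equiv1$ ``near ${\rm supp}\,\rho$'' you are implicitly assuming ${\rm supp}\,\rho\Subset D$. In the paper's setting $\phi\in C_0^\infty(D)$, but a priori the field $\rho$ is only assumed supported \emph{in} $D$; this is harmless (one may enlarge $D$ slightly or use that ${\rm supp}\,K_\rho\subset D\times D$ directly), but it is worth stating the hypothesis you are using. Everything else---the Hölder exponent computation giving $|w-w'|^{m+1-n}$ (with the expected log loss at $m=n$), the smoothness in the base variable, and the absorption of the smooth correction $h(y-x)$ into $F_\alpha$---is fine.
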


Define the smooth potential function 
\begin{equation}\label{rhoepsilon}
\rho_{\varepsilon}:=\rho\ast\varphi_{\varepsilon},
\end{equation}
which is obtained from taking the convolution of the rough potential $\rho$ with the polishing function $\varphi_{\varepsilon}(x):=\varepsilon^{-n}\varphi(x/\varepsilon)$, where $\varphi\in C_0^{\infty}(\mathbb R^n)$ satisfies $\int_{\mathbb R^n}\varphi(x)dx=1$ and ${\rm supp}\varphi\subset\{x\in\mathbb R^n: |x|\leq 1/4\}$. Denote the covariance kernel of $\rho_{\varepsilon}$ by 
\[
K_{\rho_{\varepsilon}}(x,y)=\mathbb E[\rho_{\varepsilon}(x)\rho_{\varepsilon}(y)]. 
\]

\begin{lemma}\label{lemmaadd5}
Let $\rho$ satisfy assumptions in Lemma \ref{lemmaadd4} and $\rho_{\varepsilon}$ be given in \eqref{rhoepsilon}. Then the covariance kernel $K_{\rho_{\varepsilon}}(x,y)$ converges to $K_\rho(x,y)$ point-wisely as $\varepsilon\to0$.
\end{lemma}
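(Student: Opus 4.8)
The plan is to realise $K_{\rho_\varepsilon}$ as a double mollification of $K_\rho$ and then pass to the limit using the explicit form of $K_\rho$ furnished by Lemma \ref{lemmaadd4}. Since $\rho$ is real-valued, $\rho_\varepsilon(x)=\langle\rho,\varphi_\varepsilon(x-\cdot)\rangle$ with $\varphi_\varepsilon(x-\cdot)\in C_0^\infty(\mathbb R^n)$, so the definition of the covariance operator gives
\[
K_{\rho_\varepsilon}(x,y)=\mathbb E[\rho_\varepsilon(x)\rho_\varepsilon(y)]=\langle\mathcal C_\rho\varphi_\varepsilon(x-\cdot),\varphi_\varepsilon(y-\cdot)\rangle=\int_{\mathbb R^n}\int_{\mathbb R^n}K_\rho(x-u,y-v)\varphi_\varepsilon(u)\varphi_\varepsilon(v)\,du\,dv.
\]
After the rescaling $u=\varepsilon a,\ v=\varepsilon b$ and using $\int_{\mathbb R^n}\varphi=1$, this becomes $\int\int K_\rho(x-\varepsilon a,y-\varepsilon b)\varphi(a)\varphi(b)\,da\,db$, an integral over the fixed compact set $\{\,|a|,|b|\le 1/4\,\}$. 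The identity is legitimate because $K_\rho\in L^1_{\rm loc}(\mathbb R^n\times\mathbb R^n)$: by Lemma \ref{lemmaadd4} its only singularity lies on the diagonal and is $F_0(x,y)|x-y|^{-(n-m)}$ (or $F_0(x,y)\log|x-y|$ when $m=n$), which is locally integrable since $n-m\in[0,1)$; this local integrability also justifies the use of Fubini.

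Next I would split $K_\rho=F_0\,g+F_\alpha$ with $g(x,y)=|x-y|^{-(n-m)}$ (resp.\ $\log|x-y|$) and handle the two pieces separately. The regular piece is immediate: $F_\alpha\in C^{0,\alpha}$ is bounded and continuous, so dominated convergence on the compact $(a,b)$-domain yields $\int\int F_\alpha(x-\varepsilon a,y-\varepsilon b)\varphi(a)\varphi(b)\,da\,db\to F_\alpha(x,y)$. For the singular piece and a fixed off-diagonal point $x\ne y$, once $\varepsilon<\tfrac12|x-y|$ the quantity $x-y-\varepsilon(a-b)$ stays bounded away from the origin on the support of $\varphi\otimes\varphi$, so the integrand is uniformly bounded and converges pointwise to $F_0(x,y)g(x,y)$; dominated convergence again closes the case, giving $K_{\rho_\varepsilon}(x,y)\to K_\rho(x,y)$ for every $x\ne y$. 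On the diagonal $x=y$ the same computation, after pulling the factor $\varepsilon^{-(n-m)}$ (resp.\ $\log\varepsilon$) out of $g$, shows that $K_{\rho_\varepsilon}(x,x)$ behaves like a constant multiple of $F_0(x,x)\varepsilon^{-(n-m)}$ (resp.\ $F_0(x,x)\log\varepsilon$), which agrees with $K_\rho(x,x)$ read as an element of $[0,+\infty]$; the degenerate subcase $F_0(x,x)=0$ is handled directly from the Hölder continuity of $F_0g$ near the diagonal.

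I expect the only genuine obstacle to be bookkeeping rather than analysis: carefully justifying that the shifted mollifiers are admissible arguments of $\mathcal C_\rho$ and that the kernel identity together with Fubini survives the diagonal singularity, and then running the two cases $m\in(n-1,n)$ and $m=n$ of Lemma \ref{lemmaadd4} in parallel (including the sign of $F_0$ in the logarithmic case, which is what keeps $K_{\rho_\varepsilon}(x,x)\ge0$ consistent with the conclusion). Beyond that the statement reduces to the elementary fact that a double convolution against an approximate identity converges pointwise at every point of continuity of the kernel — here precisely the complement of the diagonal — together with an elementary scaling estimate on the diagonal itself.
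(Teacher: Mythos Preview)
Your proposal is correct and follows essentially the same approach as the paper: both express $K_{\rho_\varepsilon}(x,y)$ as the double mollification $\int\int K_\rho(x-z_1,y-z_2)\varphi_\varepsilon(z_1)\varphi_\varepsilon(z_2)\,dz_1dz_2$, invoke the explicit form of $K_\rho$ from Lemma~\ref{lemmaadd4}, and use continuity away from the diagonal to pass to the limit (you phrase it as dominated convergence after rescaling, the paper as a direct $\varepsilon$--$\delta$ argument). The paper treats only the off-diagonal case $x\neq y$, which is all that is needed downstream, so your additional discussion of the diagonal is not required.
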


\begin{proof}
By the definition of $K_{\rho_\varepsilon}$, we have 
\begin{align}\nonumber
K_{\rho_{\varepsilon}}(x,y)-K_\rho(x,y)
&=\int_{\mathbb R^n}\int_{\mathbb R^n}\left(\mathbb E\left[\rho(x-z_1)\rho(y-z_2)\right]-K_{\rho}(x,y)\right)\varphi_{\varepsilon}(z_1)\varphi_{\varepsilon}(z_2)dz_1dz_2\\\label{y1}
&=\int_{\mathbb R^n}\int_{\mathbb R^n}\left[K_\rho(x-z_1,y-z_2)-K_\rho(x,y)\right]\varphi_{\varepsilon}(z_1)\varphi_{\varepsilon}(z_2)dz_1dz_2.
\end{align}

For the case $m=n$, we get from Lemma \ref{lemmaadd4} that 
\begin{align*}
&K_\rho(x-z_1,y-z_2)-K_\rho(x,y)\\
&=F_0(x-z_1,y-z_2){\rm log}|(x-z_1)-(y-z_2)|+F_{\alpha}(x-z_1,y-z_2)-F_0(x,y){\rm log}|x-y|-F_{\alpha}(x,y)\\
&=F_0(x-z_1,y-z_2)\left[{\rm log}|(x-z_1)-(y-z_2)|-{\rm log}|x-y|\right]\\
&\quad +\left[F_0(x-z_1,y-z_2)-F_0(x,y)\right]{\rm log}|x-y|+F_{\alpha}(x-z_1,y-z_2)-F_{\alpha}(x,y).
\end{align*}
Note that in (\ref{y1}), for any $z_1, z_2\in{\rm supp}\varphi_{\varepsilon}$, it holds $|z_1|\leq \varepsilon/4$ and $|z_2|\leq \varepsilon/4$. 
For any $\delta>0$ and any fixed $x,y\in D$ with $x\neq y$, there exists a sufficiently small $\varepsilon=\varepsilon(\delta,x,y)>0$ such that if $z_1,z_2\in{\rm supp}\varphi_{\varepsilon}$, then 
\begin{eqnarray*}\label{y3}
&&\left|F_0(x-z_1,y-z_2)-F_0(x,y)\right|< \delta,\quad \left|F_{\alpha}(x-z_1,y-z_2)-F_{\alpha}(x,y)\right|<\delta,\\\label{y4}
&&\left|{\rm log}|(x-z_1)-(y-z_2)|-{\rm log}|x-y|\right|=\left|{\rm log}\left|1-\frac{z_1-z_2}{x-y}\right|\right|<\delta
\end{eqnarray*}
due to the continuity of $F_0$ and $F_\alpha$,  and $F_0(x-z_1,y-z_2)$ is bounded.
Hence, for any $\delta>0$ and fixed $x,y\in D$ with $x\neq y$, there exists $\varepsilon=\varepsilon(\delta,x,y)>0$ such that 
\begin{eqnarray*}\label{y5}
\left|K_\rho(x-z_1,y-z_2)-K_\rho(x,y)\right|<\delta\quad\forall~z_1,z_2\in{\rm supp}\varphi_{\varepsilon},
\end{eqnarray*}
and (\ref{y1}) satisfies
\begin{eqnarray*}\label{y6}
\left|K_{\rho_{\varepsilon}}(x,y)-K_\rho(x,y)\right|<\delta.
\end{eqnarray*}

For the case $m\in(n-1,n)$, note that for any $\delta>0$ and fixed $x,y\in D$ with $x\neq y$, there exists $\varepsilon=\varepsilon(\delta,x,y)>0$ such that if $z_1,z_2\in{\rm supp}\varphi_{\varepsilon}$, then
\begin{eqnarray*}%\label{y7}
\left|\left|(x-z_1)-(y-z_2)\right|^{-(n-m)}-|x-y|^{-(n-m)}\right|
=|x-y|^{-(n-m)}\left|\left|1-\frac{z_1-z_2}{x-y}\right|^{-(n-m)}-1\right|<\delta.
\end{eqnarray*}
A similar argument as the case $m=n$ leads to the point-wise convergence of $K_{\rho_{\varepsilon}}(x,y)$.
\end{proof}

\begin{lemma}\label{lemmaadd7}
For any given $x,y\in\mathbb R^n$ with $x\neq y$ and any $\varepsilon<\frac12|x-y|$, the covariance kernel $K_{\rho_{\varepsilon}}$ of $\rho_\varepsilon$ admits the following estimates:
\begin{align*}
\left|K_{\rho_{\varepsilon}}(x,y)\right|&\lesssim \left|{\rm log}|x-y|\right|+O(1)\qquad{\rm for}\;\; m=n,\\\label{y10}
\left|K_{\rho_{\varepsilon}}(x,y)\right|&\lesssim |x-y|^{-(3-m)}+O(1)\quad{\rm for}\;\;m\in (n-1, n).
\end{align*}
\end{lemma}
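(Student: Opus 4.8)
The plan is to begin from the convolution representation of the mollified covariance kernel. Since $\rho_\varepsilon=\rho\ast\varphi_\varepsilon$, Fubini's theorem together with the definition of the covariance kernel gives
\[
K_{\rho_\varepsilon}(x,y)=\int_{\mathbb R^n}\int_{\mathbb R^n}K_\rho(x-z_1,y-z_2)\,\varphi_\varepsilon(z_1)\varphi_\varepsilon(z_2)\,dz_1\,dz_2,
\]
which is the identity already recorded in \eqref{y1}. I would then insert the structural formula for $K_\rho$ supplied by Lemma \ref{lemmaadd4}: write $K_\rho=F_0(\cdot,\cdot)\,g(|\cdot-\cdot|)+F_\alpha$, where $g(t)=t^{-(n-m)}$ when $m\in(n-1,n)$ and $g(t)=\log t$ when $m=n$, and where $F_0\in C^\infty$ and $F_\alpha\in C^{0,\alpha}$ are bounded because they are continuous with support contained in $D\times D$.

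The geometric core of the argument is that mollification cannot displace the singularity. For $z_1,z_2\in{\rm supp}\,\varphi_\varepsilon$ one has $|z_1|,|z_2|\le\varepsilon/4$, hence $|z_1-z_2|\le\varepsilon/2$, and therefore under the hypothesis $\varepsilon<\tfrac12|x-y|$,
\[
\tfrac12|x-y|\le\bigl|(x-z_1)-(y-z_2)\bigr|\le\tfrac32|x-y|.
\]
Feeding this two-sided bound into $g$ yields, in the power case, $\bigl|g(|(x-z_1)-(y-z_2)|)\bigr|\le 2^{n-m}|x-y|^{-(n-m)}$, and in the logarithmic case, $\bigl|g(|(x-z_1)-(y-z_2)|)\bigr|\le|\log|x-y||+\log 2$. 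Since $F_0$ and $F_\alpha$ are bounded and $\|\varphi_\varepsilon\|_{L^1(\mathbb R^n)}=\|\varphi\|_{L^1(\mathbb R^n)}$ is a finite constant independent of $\varepsilon$, integrating against $\varphi_\varepsilon(z_1)\varphi_\varepsilon(z_2)$ preserves these bounds up to a multiplicative constant: the $F_0\,g$ term contributes $\lesssim|x-y|^{-(n-m)}$ (resp. $\lesssim|\log|x-y||$) and the $F_\alpha$ term contributes an $O(1)$ quantity. Collecting the two pieces gives exactly the asserted estimates, recalling that $3-m=n-m$ since $n=3$ in the present setting.

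I do not expect a genuine obstacle here; the proof is essentially a careful triangle-inequality estimate. The two points deserving attention are that $\varphi$ is merely of unit integral and need not be nonnegative, so the double integral must be controlled by $\|\varphi\|_{L^1}^2$ rather than by $1$ (harmless, as this is a fixed constant), and that the bounds are uniform for $\varepsilon$ in the stated range $\varepsilon<\tfrac12|x-y|$. This uniformity is precisely what makes the estimate usable later, in combination with the point-wise convergence $K_{\rho_\varepsilon}\to K_\rho$ from Lemma \ref{lemmaadd5}, when one wishes to pass to the limit in integrals against $K_{\rho_\varepsilon}$ by dominated convergence.
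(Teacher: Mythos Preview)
Your proposal is correct and follows essentially the same route as the paper: start from the convolution identity \eqref{y1}, use the support condition on $\varphi_\varepsilon$ to get a two-sided comparison $|(x-z_1)-(y-z_2)|\asymp|x-y|$, transfer this to the singular factor in Lemma \ref{lemmaadd4}, and integrate against the bounded kernels $F_0,F_\alpha$. The constants in your two-sided bound ($\tfrac12$ and $\tfrac32$) differ slightly from the paper's ($\tfrac34$ and $\tfrac54$), and your remark on $\|\varphi\|_{L^1}$ versus nonnegativity is a small refinement the paper leaves implicit, but the argument is the same.
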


\begin{proof}
It follows from (\ref{y1}) that 
\begin{eqnarray}\label{y11}
K_{\rho_{\varepsilon}}(x,y)=\int_{\mathbb R^n}\int_{\mathbb R^n}K_{\rho}(x-z_1,y-z_2)\varphi_{\varepsilon}(z_1)\varphi_{\varepsilon}(z_2)dz_1dz_2.
\end{eqnarray}
For $z_1,z_2\in{\rm supp}\varphi_\varepsilon$ with $\epsilon<\frac12|x-y|$, we obtain
\begin{eqnarray*}\label{y12}
|z_1-z_2|\leq |z_1|+|z_2|\leq\frac14\varepsilon+\frac14\varepsilon=\frac{1}{2}\varepsilon<\frac{1}{4}|x-y|,
\end{eqnarray*}
which gives the following equivalence between $|x-y|$ and $|(x-z_1)-(y-z_2)|$:
\begin{eqnarray*}\label{y13}
\frac{3}{4}|x-y|<|x-y|-|z_1-z_2|\leq |(x-z_1)-(y-z_2)|\leq |x-y|+|z_1-z_2|<\frac{5}{4}|x-y|.
\end{eqnarray*}
Hence,
\begin{eqnarray*}\label{y14}
&&\left|{\rm log}|(x-z_1)-(y-z_2)|\right|\lesssim \left|{\rm log}|x-y|\right|,\\\label{y15}
&&\left|(x-z_1)-(y-z_2)\right|^{-(3-m)}\lesssim |x-y|^{-(3-m)}.
\end{eqnarray*}
Based on Lemma \ref{lemmaadd4} and the fact that $F_0$ and $F_\alpha$ are continuous and supported on $D\times D$, we get 
\begin{align*}
|K_{\rho}(x-z_1,y-z_2)|&=|F_0(x-z_1,y-z_2){\rm log}|(x-z_1)-(y-z_2)|+F_{\alpha}(x-z_1,y-z_2)|\notag\\
&\lesssim \left|{\rm log}|x-y|\right|+O(1)
\end{align*}
for $m=n$, and 
\begin{align*}
|K_{\rho}(x-z_1,y-z_2)|&=|F_0(x-z_1,y-z_2)|(x-z_1)--(y-z_2)|^{-(3-m)}+F_{\alpha}(x-z_1,y-z_2)|\notag\\
&\lesssim |x-y|^{-(3-m)}+O(1)
\end{align*}
for $m\in (n-1,n)$, which, together with \eqref{y11}, completes the proof.
\end{proof}

\begin{proof}[Proof of Lemma \ref{lemma4}]
For simplicity, let $\mathbb F(\omega,\theta)$ denote $\mathbb I(\omega,\theta)$ or $\mathbb J(\omega,\theta)$. Note that 
\begin{eqnarray*}\label{e18}
\frac{1}{Q}\int_Q^{2Q}\omega^m|\mathbb F(\omega,\theta)|^2d\omega=\frac{2}{2Q}\int_1^{2Q}\omega^m|\mathbb F(\omega,\theta)|^2d\omega-\frac{1}{Q}\int_1^{Q}\omega^m|\mathbb F(\omega,\theta)|^2d\omega,
\end{eqnarray*}
where
\begin{eqnarray*}\label{e20}
\frac{1}{Q}\int_1^{Q}\omega^m|\mathbb F(\omega,\theta)|^2d\omega=\int_1^{\infty}\frac{\omega {\bf 1}_{[1,Q]}(\omega)}{Q}\omega^{m-1}|\mathbb F(\omega,\theta)|^2d\omega
\end{eqnarray*}
with ${\bf 1}_{[1,Q]}(\omega)$ standing for the characteristic function of the interval $[1,Q]$, 
\[
\left|\frac{\omega {\bf 1}_{[1,Q]}(\omega)}{Q}\omega^{m-1}|\mathbb F(\omega,\theta)|^2\right|\lesssim \omega^{m-1}|\mathbb F(\omega,\theta)|^2
\]
and
\begin{eqnarray*}\label{e21}
\lim_{Q\to\infty}\frac{\omega {\bf 1}_{[1,Q]}(\omega)}{Q}=0
\end{eqnarray*}
point-wisely. 
Note also that, if 
\begin{eqnarray}\label{e22}
\int_1^{\infty}\omega^{m-1}|\mathbb F(\omega,\theta)|^2d\omega<\infty\quad{a.s.},
\end{eqnarray}
then the dominate convergence theorem leads to
\begin{eqnarray*}\label{e19}
\lim_{Q\to\infty}\frac{1}{Q-1}\int_1^{Q}\omega^m|\mathbb F(\omega,\theta)|^2d\omega=0\quad a.s.,
\end{eqnarray*}
which yields (\ref{e16}) and (\ref{e17}) in Lemma \ref{lemma4}.
Since $\mathbb F(\omega, \theta)$ is continuous with respect to $\theta\in\mathbb S^2$ according to (\ref{e11}) and (\ref{e12}), to prove (\ref{e22}), it then suffices to show that
\begin{eqnarray}\label{e24}
\mathbb E\left[\int_{\mathbb S^2}\int_1^{\infty}\omega^{m-1}|\mathbb F(\omega,\theta)|^2d\omega dS(\theta)\right]<\infty.
\end{eqnarray}

According to  Assumption \ref{as:rho} and Lemma \ref{lemmaadd3}, the random potential $\rho$ involved in $\mathbb F(\omega,\theta)$ is too rough to exist point-wisely and should be interpreted as a distribution. To deal with the roughness, next, we consider the smoothed potential function $\rho_{\varepsilon}$ given in (\ref{rhoepsilon}). Define
\begin{align*}
\mathbb I_{\varepsilon}(\omega,\theta)&:=\int_{\mathbb R^3}\int_{\mathbb R^3}\rho_{\varepsilon}(y)\rho_{\varepsilon}(z)e^{{\rm i}c_1\omega \theta\cdot(y+z)}e^{{\rm i}c_2\omega |y-z|}\mathbb K(y,z)dydz,\\
\mathbb J_{\varepsilon}(\omega,\theta)&:=\omega^{-2}\int_{\mathbb R^3}\int_{\mathbb R^3}\rho_{\varepsilon}(y)\rho_{\varepsilon}(z)e^{{\rm i}c_1\omega  \theta\cdot(y+z)}\beta(y,z,\omega)\mathbb K(y,z)dydz
\end{align*}
by replacing $\rho$ with $\rho_{\varepsilon}$ in (\ref{e11}) and (\ref{e12}). By the Wick formula and Lemma \ref{lemmaadd5}, we obtain 
\begin{align*}
&\lim_{\varepsilon\to0}\mathbb E\left[\rho_{\varepsilon}(y)\rho_{\varepsilon}(z)\rho_{\varepsilon}(u)\rho_{\varepsilon}(v)\right]\\
&=\lim_{\varepsilon\to0}\big(\mathbb E\left[\rho_{\varepsilon}(y)\rho_{\varepsilon}(z)\right]\mathbb E\left[\rho_{\varepsilon}(u)\rho_{\varepsilon}(v)\right]
+\mathbb E\left[\rho_{\varepsilon}(y)\rho_{\varepsilon}(u)\right]\mathbb E\left[\rho_{\varepsilon}(z)\rho_{\varepsilon}(v)\right]\\
&\qquad +\mathbb E\left[\rho_{\varepsilon}(y)\rho_{\varepsilon}(v)\right]\mathbb E\left[\rho_{\varepsilon}(u)\rho_{\varepsilon}(z)\right]\big)\\
&=K_\rho(y,z)K_\rho(u,v)+K_\rho(y,u)K_\rho(z,v)+K_\rho(y,v)K_\rho(u,z)
\end{align*}
point-wisely. Hence, we have 
\begin{align*}
\lim_{\varepsilon\to 0}\mathbb E\left|\mathbb I_{\varepsilon}(\omega,\theta)\right|^2
&=\lim_{\varepsilon\to 0}\int_{\mathbb R^3}\int_{\mathbb R^3}\int_{\mathbb R^3}\int_{\mathbb R^3}\mathbb E\left[\rho_{\varepsilon}(y)\rho_{\varepsilon}(z)\rho_{\varepsilon}(u)\rho_{\varepsilon}(v)\right]
 e^{{\rm i}c_1\omega  \theta\cdot(y+z)}e^{{\rm i}c_2\omega |y-z|}\\
 &\quad \times e^{-{\rm i}c_1\omega  \theta\cdot(u+v)}e^{-{\rm i}c_2\omega |u-v|}\mathbb K(y,z)\mathbb K(u,v)dydzdudv\\
 &=\int_{\mathbb R^3}\int_{\mathbb R^3}\int_{\mathbb R^3}\int_{\mathbb R^3}\left[K_\rho(y,z)K_\rho(u,v)+K_\rho(y,u)K_\rho(z,v)+K_\rho(y,v)K_\rho(u,z)\right]\\
&\quad \times e^{{\rm i}c_1\omega  \theta\cdot(y+z)}e^{{\rm i}c_2\omega |y-z|}
 e^{-{\rm i}c_1\omega  \theta\cdot(u+v)}e^{-{\rm i}c_2\omega |u-v|}\mathbb K(y,z)\mathbb K(u,v)dydzdudv\\
&=\int_{\mathbb R^3}\int_{\mathbb R^3}\int_{\mathbb R^3}\int_{\mathbb R^3}\mathbb E\left[\rho(y)\rho(z)\rho(u)\rho(v)\right]
e^{{\rm i}c_1\omega  \theta\cdot(y+z)}e^{{\rm i}c_2\omega |y-z|}\\
&\quad \times e^{-{\rm i}c_1\omega  \theta\cdot(u+v)}e^{-{\rm i}c_2\omega |u-v|}\mathbb K(y,z)\mathbb K(u,v)dydzdudv\\
&=\mathbb E\left|\mathbb I(\omega,\theta)\right|^2.
\end{align*}
Similarly, we have
\begin{eqnarray*}
\lim_{\varepsilon\to 0}\mathbb E\left|\mathbb J_{\varepsilon}(\omega,\theta)\right|^2=\mathbb E\left|\mathbb J(\omega,\theta)\right|^2.
\end{eqnarray*}
Let $\mathbb F_{\varepsilon}$ denote $\mathbb I_{\varepsilon}$ or $\mathbb J_{\varepsilon}$. Then Fatou's lemma gives 
\begin{align*}
\mathbb E\int_{\mathbb S^2}\int_1^{\infty}\omega^{m-1}\left|\mathbb F(\omega,\theta)\right|^2d\omega dS(\theta)&\leq \varliminf_{\varepsilon\to 0}\mathbb E\int_{\mathbb S^2}\int_1^{\infty}\omega^{m-1}\left|\mathbb F_{\varepsilon}(\omega, \theta)\right|^2d\omega dS(\theta)\\
&\leq \varlimsup_{\varepsilon\to 0}\mathbb E\int_{\mathbb S^2}\int_1^{\infty}\omega^{m-1}\left|\mathbb F_{\varepsilon}(\omega, \theta)\right|^2d\omega dS(\theta).
\end{align*}
Hence, to prove (\ref{e24}), it is enough to show 
\begin{eqnarray}\label{e29}
\varlimsup_{\varepsilon\to 0}\mathbb E\int_{\mathbb S^2}\int_1^{\infty}\omega^{m-1}\left|\mathbb F_{\varepsilon}(\omega, \theta)\right|^2d\omega dS(\theta)<\infty.
\end{eqnarray}

To prove (\ref{e29}), the main idea is to rewrite $\mathbb F_{\varepsilon}$ as the Fourier transform of a proper function.  To this end, we define $y+z=g$ and $y-z=h=r\xi$ with $r=|h|$ and $\xi\in\mathbb S^2$.

(i) For the case $\mathbb F_{\varepsilon}=\mathbb I_{\varepsilon}$, it holds
\begin{align*}
\mathbb I_{\varepsilon}(\omega,\theta)&=\frac{1}{8}\int_{\mathbb R^3}\int_{\mathbb R^3}\rho_{\varepsilon}\left(\frac{g+h}{2}\right)\rho_{\varepsilon}\left(\frac{g-h}{2}\right)e^{{\rm i}c_1\omega  \theta\cdot g}e^{{\rm i}c_2\omega |h|}\mathbb K\left(\frac{g+h}{2}, \frac{g-h}{2}\right)dg dh\\
&=\frac{1}{8}\int_{\mathbb R^3}\int_0^{\infty}\int_{\mathbb S^2}\rho_{\varepsilon}\left(\frac{g+r\xi}{2}\right)\rho_{\varepsilon}\left(\frac{g-r\xi}{2}\right)e^{{\rm i}\omega (c_2r+c_1 \theta\cdot g)}r^{-1}\xi_1^{p_1}\xi_2^{p_2}\xi_3^{p_3}r^{2}dg drdS(\xi)\\
&=\int_{\mathbb R^3}\int_{\mathbb R}e^{{\rm i}\omega(c_2r+c_1 \theta\cdot g)}f_{\varepsilon}(g,r)dg dr\\
&=\int_{\mathbb R^3}e^{{\rm i}\omega c_1  \theta\cdot g}Tf_{\varepsilon}(g,\omega)dg,
\end{align*}
where we use the fact $p_1+p_2+p_3-p_4=-1$ for $S_{\mathbb I}$, and $f_{\varepsilon}$ and $Tf_{\varepsilon}$ are defined by
\begin{align*}
f_{\varepsilon}(g,r)&:=\frac{1}{8}r {\bf 1}_{[0,\infty)}(r)\int_{\mathbb S^2}\rho_{\varepsilon}\left(\frac{g+r\xi}{2}\right)\rho_{\varepsilon}\left(\frac{g-r\xi}{2}\right)\xi_1^{p_1}\xi_2^{p_2}\xi_3^{p_3}dS(\xi),\\
Tf_{\varepsilon}(g,\omega)&:=\int_{\mathbb R}e^{{\rm i}\omega c_2r}f_{\varepsilon}(g,r)dr.
\end{align*}
Since 
\begin{eqnarray*}
\frac{g+r\xi}{2},\quad\frac{g-r\xi}{2}\in \bigcup_{\varepsilon\in (0,1]}{\rm supp}\rho_{\varepsilon},
\end{eqnarray*}
there exists a constant $M$ depending on the support of $\rho$ such that 
\begin{eqnarray}\label{e33}
|g|+|r|=\left|\frac{g+r\xi}{2}+\frac{g-r\xi}{2}\right|+\left|\frac{g+r\xi}{2}-\frac{g-r\xi}{2}\right|\leq |g+r\xi|+|g-r\xi|\leq M.
\end{eqnarray}
An analogous argument as that used in \cite[Lemma 4.7]{CHL} gives
\begin{eqnarray}\label{eq:Tf}
\int_{\mathbb S^2}\omega^{m-1}\left|\int_{\mathbb R^3}e^{{\rm i}\omega c_1\theta\cdot g}Tf_{\varepsilon}(g,\omega)dg\right|^2dS(\theta)\lesssim \omega^{m-3}\int_{\mathbb R^3}\left|Tf_{\varepsilon}(g,\omega)\right|^2dg.
\end{eqnarray}
We then conclude that 
\begin{align}\nonumber
\int_{\mathbb S^2}\omega^{m-1}\left|\mathbb I_{\varepsilon}(\omega,\theta)\right|^2dS(\theta)&=\int_{\mathbb S^2}\omega^{m-1}\left|\int_{\mathbb R^3}e^{{\rm i}\omega c_1 \theta\cdot g}Tf_{\varepsilon}(g,\omega)dg\right|^2dS(\theta)\\\label{e34}
&\lesssim \omega^{m-3}\int_{\mathbb R^3}\left|Tf_{\varepsilon}(g,\omega)\right|^2dg.
\end{align}
Noting 
\begin{eqnarray}\label{e35}
Tf_{\varepsilon}(g,\omega)=\int_{\mathbb R}e^{{\rm i}\omega c_2r}f_{\varepsilon}(g,r)dr=\frac{1}{c_2}\int_{\mathbb R}e^{{\rm i}\omega \tilde{r}}f_{\varepsilon}(g, \tilde{r}/c_2)d\tilde{r}
\end{eqnarray}
with $\tilde{r}=c_2r$, we deduce from (\ref{e34}) and (\ref{e35}) that 
\begin{align}\nonumber
&\int_1^{\infty}\int_{\mathbb S^2}\omega^{m-1}\left|\mathbb I_{\varepsilon}(\omega,\theta)\right|^2dS(\theta)d\omega\lesssim \int_1^{\infty}\omega^{m-3}\int_{\mathbb R^3}\left|Tf_{\varepsilon}(g,\omega)\right|^2dgd\omega\\\nonumber
&=\int_{\mathbb R^3}\int_1^{\infty}\omega^{m-3}\int_{\mathbb R^3}\left|Tf_{\varepsilon}(g,\omega)\right|^2d\omega dg\lesssim \int_{\mathbb R^3}\int_1^{\infty}\left|Tf_{\varepsilon}(g,\omega)\right|^2d\omega dg\\\label{e36}
&\lesssim \int_{\mathbb R^3}\int_{\mathbb R}\left|f_{\varepsilon}(g,\tilde{r}/c_2)\right|^2d\tilde{r}dg,
\end{align}
where we use the Plancherel identity and the fact $\omega^{m-3}\leq 1$ for $\omega\geq 1$ and $m\in (14/5, 3]$. Taking the limit of the expectation of (\ref{e36}) yields
\begin{eqnarray*}\label{e37}
\varlimsup_{\varepsilon\to 0}\mathbb E\int_1^{\infty}\int_{\mathbb S^2}\omega^{m-1}\left|\mathbb I_{\varepsilon}(\omega,\theta)\right|^2dS(\theta)d\omega\lesssim \lim_{\varepsilon\to 0} \int_{|g|<M}\int_{|\tilde{r}|<c_2M} \mathbb E\left|f_{\varepsilon}(g,\tilde{r}/c_2)\right|^2d\tilde{r}dg,
\end{eqnarray*}
where we use (\ref{e33}). According to the definition of $f_{\varepsilon}$, we have
\begin{eqnarray*}\label{e38}
f_{\varepsilon}(g,\tilde{r}/c_2)=\frac{1}{8c_2}\tilde{r} {\bf 1}_{[0,\infty)}(\tilde{r})\int_{\mathbb S^2}\rho_{\varepsilon}\Big(\frac{g+(\tilde{r}\xi)/c_2}{2}\Big)\rho_{\varepsilon}\Big(\frac{g-(\tilde{r}\xi)/c_2}{2}\Big)\xi_1^{p_1}\xi_2^{p_2}\xi_3^{p_3}dS(\xi),
\end{eqnarray*}
which leads to 
\begin{eqnarray*}\label{e39}
\mathbb E\left|f_{\varepsilon}\left(g,\tilde{r}/c_2\right)\right|^2=\frac{1}{64c_2^2}\tilde{r}^2{\bf 1}_{[0,\infty)}(\tilde{r})\int_{\mathbb S^2}\int_{\mathbb S^2}A_{\varepsilon}(g,\tilde{r},\xi,\zeta)\xi_1^{p_1}\xi_2^{p_2}\xi_3^{p_3}\zeta_1^{p_1}\zeta_2^{p_2}\zeta_3^{p_3}dS(\xi)dS(\zeta)
\end{eqnarray*}
with
\begin{eqnarray*}\label{e40}
A_{\varepsilon}(g,\tilde{r},\xi,\zeta):=\mathbb E \bigg[\rho_{\varepsilon}\Big(\frac{g+(\tilde{r}\xi)/c_2}{2}\Big)\rho_{\varepsilon}\Big(\frac{g-(\tilde{r}\xi)/c_2}{2}\Big)
\rho_{\varepsilon}\Big(\frac{g+(\tilde{r}\zeta)/c_2}{2}\Big)\rho_{\varepsilon}\Big(\frac{g-(\tilde{r}\zeta)/c_2}{2}\Big)\bigg].
\end{eqnarray*}
 It follows from the Wick formula that 
 \begin{align*}
&A_{\varepsilon}(g,\tilde{r},\xi,\zeta)\\
&= \mathbb E \bigg[\rho_{\varepsilon}\Big(\frac{g+(\tilde{r}\xi)/c_2}{2}\Big)\rho_{\varepsilon}\Big(\frac{g-(\tilde{r}\xi)/c_2}{2}\Big)\bigg]\mathbb E\bigg[ \rho_{\varepsilon}\Big(\frac{g+(\tilde{r}\zeta)/c_2}{2}\Big)\rho_{\varepsilon}\Big(\frac{g-(\tilde{r}\zeta)/c_2}{2}\Big)\bigg]\\
&\quad  + \mathbb E \bigg[\rho_{\varepsilon}\Big(\frac{g+(\tilde{r}\xi)/c_2}{2}\Big)\rho_{\varepsilon}\Big(\frac{g+(\tilde{r}\zeta)/c_2}{2}\Big)\bigg]\mathbb E\bigg[ \rho_{\varepsilon}\Big(\frac{g-(\tilde{r}\xi)/c_2}{2}\Big)\rho_{\varepsilon}\Big(\frac{g-(\tilde{r}\zeta)/c_2}{2}\Big)\bigg]\\
&\quad \times\mathbb E \bigg[\rho_{\varepsilon}\Big(\frac{g+(\tilde{r}\xi)/c_2}{2}\Big)\rho_{\varepsilon}\Big(\frac{g-(\tilde{r}\zeta)/c_2}{2}\Big)\bigg]\mathbb E\bigg[ \rho_{\varepsilon}\Big(\frac{g+(\tilde{r}\zeta)/c_2}{2}\Big)\rho_{\varepsilon}\Big(\frac{g-(\tilde{r}\xi)/c_2}{2}\Big)\bigg].
\end{align*}
Applying Lemma \ref{lemmaadd7} implies that 
\begin{align}\nonumber
&\left|A_{\varepsilon}(g,\tilde{r},\xi,\zeta)\right|\\\nonumber
&\leq \left[{\rm log}\left(\tilde{r}/c_2\right)+O(1)\right]^2+\bigg[{\rm log}\Big|\frac{\tilde{r}}{2c_2}(\xi-\zeta)\Big|+O(1)\bigg]^2+\left[{\rm log}\Big|\frac{\tilde{r}}{2c_2}(\xi+\zeta)\Big|+O(1)\right]^2\\\nonumber
&\lesssim {\rm log}^2(\tilde{r})+{\rm log}^2|\tilde{r}(\xi-\zeta)|+{\rm log}^2|\tilde{r}(\xi+\zeta)|+{\rm log}(\tilde{r})+{\rm log}|\tilde{r}(\xi-\zeta)|+{\rm log}|\tilde{r}(\xi+\zeta)|+O(1)\\\label{e42}
&=: H_1(\tilde{r},\xi,\zeta)
\end{align}
for the case $m=3$, and 
\begin{align}\nonumber
&\left|A_{\varepsilon}(g,\tilde{r},\xi,\zeta)\right|\\\nonumber
&\leq \left[\left(\tilde{r}/c_2\right)^{-(3-m)}+O(1)\right]^2+\left[\Big|\frac{\tilde{r}}{2c_2}(\xi-\zeta)\Big|^{-(3-m)}+O(1)\right]^2+\left[\Big|\frac{\tilde{r}}{2c_2}(\xi+\zeta)\Big|^{-(3-m)}+O(1)\right]^2\\\nonumber
&\lesssim \tilde{r}^{-2(3-m)}+|\tilde{r}(\xi-\zeta)|^{-2(3-m)}+|\tilde{r}(\xi+\zeta)|^{-2(3-m)}+\tilde{r}^{-(3-m)}+|\tilde{r}(\xi-\zeta)|^{-(3-m)}\\
&\quad +|\tilde{r}(\xi+\zeta)|^{-(3-m)}+O(1)\nonumber\\\label{e43}
&=: H_2(\tilde{r},\xi,\zeta)
\end{align}
for the case $m\in (14/5, 3)$. According to facts 
\begin{eqnarray*}
&&\int_{0<\tilde{r}<c_2M}\left(\log^2(\tilde r)+\log(\tilde r)\right)d\tilde r<\infty,\\
&&\int_{\mathbb S^2}\int_{\mathbb S^2}\left(|\xi-\zeta|^{-2(3-m)}+|\xi-\zeta|^{-(3-m)}\right)dS(\xi)dS(\zeta)<\infty,
\end{eqnarray*} 
we get
\begin{eqnarray}\label{e44}
&&\int_{|g|<M}\int_{|\tilde{r}|<c_2M}\int_{\mathbb S^2}\int_{\mathbb S^2}\tilde{r}^2{\bf 1}_{[0,\infty)}(\tilde{r})H_j(\tilde{r},\xi,\zeta)\xi_1^{p_1}\xi_2^{p_2}\xi_3^{p_3}\zeta_1^{p_1}\zeta_2^{p_2}\zeta_3^{p_3}dS(\xi)dS(\zeta)d\tilde{r}dg\\\nonumber
&&<\infty
\end{eqnarray}
for $j=1,2$.
A direct application of Lemma \ref{lemmaadd5} gives that 
\begin{align}\nonumber
&\lim_{\varepsilon\to 0}A_{\varepsilon}(g,\tilde{r},\xi,\zeta)\\\nonumber
&=K_{\rho}\Big(\frac{g+(\tilde{r}\xi)/c_2}{2},\frac{g-(\tilde{r}\xi)/c_2}{2}\Big)K_{\rho}\Big(\frac{g+(\tilde{r}\zeta)/c_2}{2},\frac{g-(\tilde{r}\zeta)/c_2}{2}\Big)\\\nonumber
&\quad +K_{\rho}\Big(\frac{g+(\tilde{r}\xi)/c_2}{2},\frac{g+(\tilde{r}\zeta)/c_2}{2}\Big)K_{\rho}\Big(\frac{g-(\tilde{r}\xi)/c_2}{2},\frac{g-(\tilde{r}\zeta)/c_2}{2}\Big)\\\nonumber
&\quad +K_{\rho}\Big(\frac{g+(\tilde{r}\xi)/c_2}{2},\frac{g-(\tilde{r}\zeta)/c_2}{2}\Big)K_{\rho}\Big(\frac{g-(\tilde{r}\xi)/c_2}{2},\frac{g+(\tilde{r}\zeta)/c_2}{2}\Big)\\\label{e45}
&=:A(g,\tilde{r},\xi,\zeta)
 \end{align}
point-wisely. Hence, the dominated convergence theorem, together with (\ref{e42})--(\ref{e45}), leads to
\begin{align}\nonumber
&\lim_{\varepsilon\to 0}\int_{|g|<M}\int_{|\tilde{r}|<c_2M}\mathbb E\left|f_{\varepsilon}\left(g,\tilde{r}/c_2\right)\right|^2d\tilde{r}dg\\\nonumber
&=\int_{|g|<M}\int_{|\tilde{r}|<c_2M}\lim_{\varepsilon\to 0}\mathbb E\left|f_{\varepsilon}\left(g,\tilde{r}/c_2\right)\right|^2d\tilde{r}dg\\\nonumber
&=\frac{1}{64c_2^2}\int_{|g|<M}\int_{|\tilde{r}|<c_2M}\tilde{r}^2{\bf 1}_{[0,\infty)}(\tilde{r})\int_{\mathbb S^2}\int_{\mathbb S^2}A(g,\tilde{r},\xi,\zeta)\xi_1^{p_1}\xi_2^{p_2}\xi_3^{p_3}\zeta_1^{p_1}\zeta_2^{p_2}\zeta_3^{p_3}dS(\xi)dS(\zeta)d\tilde{r}dg\\\label{e46}
&<\infty
\end{align}
according to Lemma \ref{lemmaadd4}. Thus, we conclude that (\ref{e29}) holds for $\mathbb F_{\varepsilon}=\mathbb I_{\varepsilon}$, which leads to (\ref{e16}).

(ii) For the case $\mathbb F_{\varepsilon}=\mathbb J_{\varepsilon}$, it holds
\begin{align*}
\mathbb J_{\varepsilon}(\omega,\theta)&=\frac{1}{8}\omega^{-2}\int_{\mathbb R^3}\int_{\mathbb R^3}\rho_{\varepsilon}\Big(\frac{g+h}{2}\Big)\rho_{\varepsilon}\Big(\frac{g-h}{2}\Big)e^{{\rm i}c_1\omega  \theta\cdot g}\beta(r,\omega)r^{-3}\xi_1^{p_1}\xi_2^{p_2}\xi_3^{p_3}dhdg\\
&=\frac{1}{8}\omega^{-2}\int_{\mathbb R^3}\int_0^{\infty}\int_{\mathbb S^2}\rho_{\varepsilon}\Big(\frac{g+r\xi}{2}\Big)\rho_{\varepsilon}\Big(\frac{g-r\xi}{2}\Big)e^{{\rm i}c_1\omega  \theta\cdot g}\beta(r,\omega)r^{-1}\xi_1^{p_1}\xi_2^{p_2}\xi_3^{p_3}dS(\xi)dr dg\\
&=\omega^{-2}\int_{\mathbb R^3}e^{{\rm i}c_1\omega  \theta\cdot g}\int_{\mathbb R}\beta(r,\omega)f_{\varepsilon}(g,r)dr dg\\
&=\omega^{-2}\int_{\mathbb R^3}e^{{\rm i}c_1\omega  \theta\cdot g}Tf_{\varepsilon}(g,\omega)dg,
\end{align*}
where we use the fact $p_1+p_2+p_3-p_4=-3$ for $(p_1,p_2,p_3,p_4)\in S_{\mathbb J}$, and $f_{\varepsilon}$ and $Tf_{\varepsilon}$ are defined by
\begin{align*}
f_{\varepsilon}(g,r)&:=\frac{1}{8}r^{-1}{\bf 1}_{[0,\infty)}(r)\int_{\mathbb S^2}\rho_{\varepsilon}\Big(\frac{g+r\xi}{2}\Big)\rho_{\varepsilon}\Big(\frac{g-r\xi}{2}\Big)\xi_1^{p_1}\xi_2^{p_2}\xi_3^{p_3}dS(\xi),\\
Tf_{\varepsilon}(g,\omega)&:=\int_{\mathbb R}\beta(r,\omega)f_{\varepsilon}(g,r)dr.
\end{align*}
Thus, we obtain 
\begin{align}\nonumber
\int_{\mathbb S^2}\omega^{m-1}\left|\mathbb J_{\varepsilon}(\omega,\theta)\right|^2dS(\theta)&=\int_{\mathbb S^2}\omega^{m-5}\left|\int_{\mathbb R^3}e^{{\rm i}\omega c_1 \theta\cdot g}Tf_{\varepsilon}(g,\omega)dg\right|^2dS(\theta)\\\label{e50}
&\lesssim \omega^{m-7}\int_{\mathbb R^3}\left|Tf_{\varepsilon}(g,\omega)\right|^2dg,
\end{align}
where $Tf_{\varepsilon}$ is estimated as the one in (\ref{eq:Tf}). Since 
\begin{align*}
\beta(r,\omega)&= e^{{\rm i}\kappa_{\rm s}r}({\rm i}\kappa_{\rm s}r-1)-e^{{\rm i}\kappa_{\rm p}r}({\rm i}\kappa_{\rm p}r-1)\\
&= {\rm i}c_{\rm s}\omega r e^{{\rm i}c_{\rm s}\omega r}-{\rm i}c_{\rm p}\omega r e^{{\rm i}c_{\rm p}\omega r}+\left(e^{{\rm i}c_{\rm p}\omega r}-e^{{\rm i}c_{\rm s}\omega r}\right),
\end{align*}
we decompose $Tf_{\varepsilon}$ into three parts
\begin{eqnarray*}
Tf_{\varepsilon}(g,\omega)=T_1f_{\varepsilon}(g,\omega)-T_2f_{\varepsilon}(g,\omega)+T_3f_{\varepsilon}(g,\omega)
\end{eqnarray*}
where
\begin{align*}
T_1f_{\varepsilon}(g,\omega)&:={\rm i}c_{\rm s}\omega\int_{\mathbb R}r e^{{\rm i}c_{\rm s}\omega r}f_{\varepsilon}(g, r)d r,\\
T_2f_{\varepsilon}(g,\omega)&:={\rm i}c_{\rm p}\omega\int_{\mathbb R} r e^{{\rm i}c_{\rm p}\omega r}f_{\varepsilon}(g, r)d r,\\
T_3f_{\varepsilon}(g,\omega)&:=\int_{\mathbb R}\left(e^{{\rm i}c_{\rm p}\omega r}-e^{{\rm i}c_{\rm s}\omega r}\right)f_{\varepsilon}(g, r)d r.
\end{align*}
Hence,
\begin{align}\label{e56}
|Tf_{\varepsilon}(g,\omega)|^2\lesssim |T_1f_{\varepsilon}(g,\omega)|^2+|T_2f_{\varepsilon}(g,\omega)|^2+|T_3f_{\varepsilon}(g,\omega)|^2.
\end{align}
Substituting (\ref{e56}) into (\ref{e50}), taking the integration with respect to $\omega$ on the interval $[1,\infty)$, and taking the expectation on both sides of (\ref{e50}), we get
\begin{align}\label{e57}
&\mathbb E\int_1^{\infty}\int_{S^2}\omega^{m-1}\left|\mathbb J_{\varepsilon}(\omega,\theta)\right|^2dS(\theta)d\omega\notag\\
&\lesssim \mathbb E\int_1^{\infty}\omega^{m-7}\int_{\mathbb R^3}|T_1f_{\varepsilon}(g,\omega)|^2dgd\omega+\mathbb E\int_1^{\infty}\omega^{m-7}\int_{\mathbb R^3}|T_2f_{\varepsilon}(g,\omega)|^2dgd\omega\notag\\
&\quad +\mathbb E\int_1^{\infty}\omega^{m-7}\int_{\mathbb R^3}|T_3f_{\varepsilon}(g,\omega)|^2dgd\omega\notag\\
&=:I_{1, \varepsilon}+I_{2, \varepsilon}+I_{3, \varepsilon}.
\end{align}

Next, we estimate the above three terms separately. The first term satisfies 
\begin{align*}
I_{1, \varepsilon}&\lesssim\mathbb E\int_1^{\infty}\omega^{m-7}\int_{\mathbb R^3}\bigg|\int_{\mathbb R}c_{\rm s}\omega e^{{\rm i}c_{\rm s}\omega r}{\bf 1}_{[0,\infty)}( r)\int_{\mathbb S^2}\rho_{\varepsilon}\Big(\frac{g+ r\xi}{2}\Big)\rho_{\varepsilon}\Big(\frac{g- r\xi}{2}\Big)\xi_1^{p_1}\xi_2^{p_2}\xi_3^{p_3}dS(\xi)d r\bigg|^2dgd\omega\\
&=\mathbb E\int_1^{\infty}\omega^{m-5}\int_{\mathbb R^3}\bigg|\int_{\mathbb R} e^{{\rm i}\omega\tilde{r}}{\bf 1}_{[0,\infty)}(\tilde{r})\int_{\mathbb S^2}\rho_{\varepsilon}\Big(\frac{g+\frac{\tilde{r}\xi}{c_{\rm s}}}{2}\Big)\rho_{\varepsilon}\Big(\frac{g-\frac{\tilde{r}\xi}{c_{\rm s}}}{2}\Big)\xi_1^{p_1}\xi_2^{p_2}\xi_3^{p_3}dS(\xi)d\tilde{r}\bigg|^2dgd\omega\\
&\lesssim\mathbb E\int_{\mathbb R^3}\int_1^{\infty}\bigg|\int_{\mathbb R} e^{{\rm i}\omega\tilde{r}}{\bf 1}_{[0,\infty)}(\tilde{r})\int_{\mathbb S^2}\rho_{\varepsilon}\Big(\frac{g+\frac{\tilde{r}\xi}{c_{\rm s}}}{2}\Big)\rho_{\varepsilon}\Big(\frac{g-\frac{\tilde{r}\xi}{c_{\rm s}}}{2}\Big)\xi_1^{p_1}\xi_2^{p_2}\xi_3^{p_3}dS(\xi)d\tilde{r}\bigg|^2d\omega dg\\
&=\mathbb E\int_{\mathbb R^3}\int_{|\tilde{r}|<c_{\rm s}M}\bigg|\int_{\mathbb S^2}\rho_{\varepsilon}\Big(\frac{g+\frac{\tilde{r}\xi}{c_{\rm s}}}{2}\Big)\rho_{\varepsilon}\Big(\frac{g-\frac{\tilde{r}\xi}{c_{\rm s}}}{2}\Big)\xi_1^{p_1}\xi_2^{p_2}\xi_3^{p_3}dS(\xi)\bigg|^2d\tilde{r}dg\\
&=\int_{|g|<M}\int_{|\tilde{r}|<c_{\rm s}M}\int_{\mathbb S^2\times\mathbb S^2}\mathbb E\bigg[\rho_{\varepsilon}\Big(\frac{g+\frac{\tilde{r}\xi}{c_{\rm s}}}{2}\Big)\rho_{\varepsilon}\Big(\frac{g-\frac{\tilde{r}\xi}{c_{\rm s}}}{2}\Big)\rho_{\varepsilon}\Big(\frac{g+\frac{\tilde{r}\zeta}{c_{\rm s}}}{2}\Big)\rho_{\varepsilon}\Big(\frac{g-\frac{\tilde{r}\zeta}{c_{\rm s}}}{2}\Big)\bigg]\\
&\quad \times\xi_1^{p_1}\xi_2^{p_2}\xi_3^{p_3}\zeta_1^{p_1}\zeta_2^{p_2}\zeta_3^{p_3}dS(\xi)dS(\zeta)d\tilde{r}dg,
\end{align*}
where we use the fact $\omega^{m-5}\leq 1$ for $\omega\geq 1$ and $m\in (14/5, 3]$, and the Plancherel identity. By the same analysis as (\ref{e46}), we get 
\begin{eqnarray}\label{e59}
\lim_{\varepsilon\to 0}I_{1, \varepsilon}<\infty,
\end{eqnarray}
and similarly
\begin{eqnarray}\label{e60}
\lim_{\varepsilon\to 0}I_{2, \varepsilon}<\infty.
\end{eqnarray}
For the third term, we have from the mean-value theorem that 
\begin{align*}
e^{{\rm i}c_{\rm p}\omega r}-e^{{\rm i}c_{\rm s}\omega r}&=\cos(c_{\rm p}\omega r)-\cos(c_{\rm s}\omega r)+{\rm i}\left[\sin(c_{\rm p}\omega r)-\sin(c_{\rm s}\omega r)\right]\\
&=(c_{\rm p}-c_{\rm s})\omega r\left[-\sin(\eta_1)+{\rm i}\cos(\eta_2)\right],
\end{align*}
where $\eta_1$ and $\eta_2$ are values between $c_{\rm p}\omega r$ and $c_{\rm s}\omega r$. It follows from the Cauchy--Schwartz inequality that 
\begin{align*}
I_{3, \varepsilon}
&=\mathbb E\int_1^{\infty}\omega^{m-7}\int_{|g|<M}\bigg|\int_{| r|<M}(e^{{\rm i}c_{\rm p}\omega r}-e^{{\rm i}c_{\rm s}\omega r})f_{\varepsilon}(g, r)d r\bigg|^2dgd\omega\\
&=\mathbb E\int_1^{\infty}\omega^{m-7}\int_{|g|<M}\bigg|\int_{| r|<M}(c_{\rm p}-c_{\rm s})\omega r\left[-\sin(\eta_1)+{\rm i}\cos(\eta_2)\right]f_{\varepsilon}(g, r)d r\bigg|^2dgd\omega\\
&\lesssim \mathbb E\int_1^{\infty}\omega^{m-5}d\omega\int_{|g|<M}{\bf 1}_{[0,\infty)}( r)\bigg|\int_{\mathbb S^2}\rho_{\varepsilon}\Big(\frac{g+r\xi}{2}\Big)\rho_{\varepsilon}\Big(\frac{g-r\xi}{2}\Big)\xi_1^{p_1}\xi_2^{p_2}\xi_3^{p_3}dS(\xi)\bigg|^2dr dg\\
&\lesssim \frac{1}{4-m}\int_{|g|<M}\int_{|r|<M}{\bf 1}_{[0, \infty)}(r)\int_{\mathbb S^2\times\mathbb S^2}\mathbb E\Big[\rho_{\varepsilon}\Big(\frac{g+r\xi}{2}\Big)\rho_{\varepsilon}\Big(\frac{g-r\xi}{2}\Big)\\
&\quad \times\rho_{\varepsilon}\Big(\frac{g+r\zeta}{2}\Big)\rho_{\varepsilon}\Big(\frac{g-r\zeta}{2}\Big)\Big]\xi_1^{p_1}\xi_2^{p_2}\xi_3^{p_3}\zeta_1^{p_1}\zeta_2^{p_2}\zeta_3^{p_3}dS(\xi)dS(\zeta)dr dg.
\end{align*}
By the similar arguments as (\ref{e46}), it can be easily checked that 
\begin{eqnarray}\label{e63}
\lim_{\varepsilon\to 0}I_{3, \varepsilon}<\infty.
\end{eqnarray}
Then we conclude from (\ref{e57})--(\ref{e63}) that (\ref{e29}) holds for $\mathbb F_{\varepsilon}=\mathbb J_{\varepsilon}$, which leads to (\ref{e17}).
\end{proof}

\end{document}